\theoremstyle{definition}
\newtheorem{lma}{Lemma}[section]
\newaliascnt{thmCt}{lma}
\newtheorem{thm}[thmCt]{Theorem}
\newaliascnt{corCt}{lma}
\newaliascnt{cnjCt}{lma}
\newaliascnt{propCt}{lma}
\newtheorem{prop}[propCt]{Proposition}
\newtheorem*{thm*}{Theorem}
\newtheorem*{cor*}{Corollary}
\newtheorem*{prop*}{Proposition}
\newcounter{theoremintro}
\newtheorem{thmintro}[theoremintro]{Theorem}
\newaliascnt{pgrCt}{lma}
\newaliascnt{dfCt}{lma}
\newtheorem{df}[dfCt]{Definition}
\newaliascnt{remCt}{lma}
\newtheorem{rem}[remCt]{Remark}
\newaliascnt{remsCt}{lma}
\newaliascnt{egCt}{lma}
\newtheorem{eg}[egCt]{Example}
\newaliascnt{egsCt}{lma}
\newaliascnt{qstCt}{lma}
\newaliascnt{pbmCt}{lma}
\newaliascnt{notaCt}{lma}
\newtheorem{nota}[notaCt]{Notation}
\newcommand{\beq}{\begin{equation}}
\newcommand{\eeq}{\end{equation}}
\newcommand{\beqa}{\begin{eqnarray*}}
\newcommand{\eeqa}{\end{eqnarray*}}
\newcommand{\bal}{\begin{align*}}
\newcommand{\eal}{\end{align*}}
\newcommand{\bi}{\begin{itemize}}
\newcommand{\ei}{\end{itemize}}
\newcommand{\be}{\begin{enumerate}}
\newcommand{\ee}{\end{enumerate}}
\newcommand{\ep}{\varepsilon}
\newcommand{\Z}{{\mathbb{Z}}}
\newcommand{\C}{{\mathbb{C}}}
\newcommand{\N}{{\mathbb{N}}}
\newcommand{\sr}{{\mathrm{sr}}}
\newcommand{\dr}{{\mathrm{dr}}}
\newcommand{\id}{{\mathrm{id}}}
\newcommand{\Aut}{{\mathrm{Aut}}}
\newcommand{\dimRok}{{\mathrm{dim}_\mathrm{Rok}}}
\newcommand{\cdimRok}{{\mathrm{dim}_\mathrm{Rok}^\mathrm{c}}}
\newcommand{\dimnuc}{{\mathrm{dim}_\mathrm{nuc}}}
\newcommand{\ca}{$C^*$-algebra}
\newcommand{\uca}{unital $C^*$-algebra}
\newcommand{\I}{\infty}
\title[]{Partial C*-dynamics and Rokhlin dimension}
\thanks{The second named author was partially supported
by the Deutsche Forschungsgemeinschaft's (DFG, German Research Foundation) eigene Stelle, and by a Postdoctoral Research Fellowship
from the Humboldt Foundation. The third named author was supported by a Kreitman Foundation Fellowship, a Minerva
Fellowship Programme, and by an Israel Science Foundation grant no.~476/16. The second and third named authors were partially supported by the 
DFG through SFB 878 and under Germany's Excellence Strategy  EXC 2044 -- 390685587, Mathematics M\"unster -- Dynamics -- Geometry -- Structure, and also by ERC Advanced Grant 
834267 - AMAREC}
\author[Fernando Abadie]{Fernando Abadie}
\address{Fernando Abadie
Centro de Matem\'atica, Facultad de Ciencias,
Universidad de la Rep\'ublica, Igu\'a 4225, 11400
Montevideo, Uruguay.}
\email{fabadie@cmat.edu.uy}
\urladdr{http://www.cmat.edu.uy/~fabadie/}
\author[Eusebio Gardella]{Eusebio Gardella}
\address{Eusebio Gardella
Mathematisches Institut, Fachbereich Mathematik und Informatik der
Universit\"at M\"unster, Einsteinstrasse 62, 48149 M\"unster, Germany.}
\email{gardella@uni-muenster.de}
\urladdr{www.math.uni-muenster.de/u/gardella/}
\author{Shirly Geffen}
\address{Shirly Geffen
Department of Mathematics, Ben-Gurion University of the Negev, Be’er
Sheva 8410501, Israel.}
\email{shirlyg@post.bgu.ac.il}
\urladdr{http://www.shirlygeffen.com}
\begin{document}

\begin{abstract}
We develop the notion of Rokhlin dimension for partial actions
of finite groups, extending the well-established theory for 
global systems. The partial setting exhibits phenomena that 
cannot be expected for global actions, usually stemming from the 
fact that virtually all averaging arguments for finite group
actions completely break down for partial systems. For example,
fixed point algebra and crossed product are not in general
Morita equivalent, and there is in general no local approximation
of the crossed product $A\rtimes G$ by matrices over $A$.
By using decomposition arguments for partial actions of finite groups, we show that a number of structural properties
are preserved by formation of crossed products, including finite
stable rank, finite nuclear dimension, and absorption of a strongly self-absorbing \ca. Some of our results are new even
in the global case.

We also study the Rokhlin dimension of globalizable actions: while in general it differs from the Rokhlin dimension of its globalization, we show that they agree if the coefficient algebra
is unital. For topological partial actions on spaces of finite
covering dimension, we show that
finiteness of the Rokhlin dimension is equivalent to freeness,
thus providing a large class of examples to which our theory 
applies.
\end{abstract}

\maketitle


\renewcommand*{\thetheoremintro}{\Alph{theoremintro}}
\section{Introduction}

Partial dynamical systems have implicitly been used in  mathematics long before the notion was formalized, 
at least since the study of differential equations. Indeed,
the flow of a differentiable vector field can be naturally
regarded as a partial action of the reals. More precisely, 
given a smooth vector field $v$ on a manifold $M$, 
for $x\in M$ let $\phi_x$ be the unique 
solution to the differential
equation $\phi'(t)=v(\phi(t))$ with initial condition
$\phi_x(0)=x$, and let $I_x\subseteq \mathbb{R}$ 
be the largest (open) neighborhood of $0$ on which $\phi_x$ is defined. 
For $t\in\mathbb{R}$, set 
\[
M_t=\{x\in M\colon t\in I_x\},                           
\]
and let $\sigma_t\colon M_{-t}\to M_t$ be the diffeomorphism
given by
$\sigma_t(x)=\phi_x(t)$ for all $x\in M_{-t}$. 
A crucial property that the collection $
\sigma=\{\sigma_t,M_t\colon t\in
\mathbb{R}\}$ satisfies, is that $\sigma_{s+t}$ extends
$\sigma_s\circ\sigma_t$, in the sense that whenever
$x\in M_{-t}$ and $\sigma_t(x)$ belongs to $M_{-s}$, then
$x$ belongs to $M_{-s-t}$ and $\sigma_{s+t}(x)=\sigma_s(\sigma_t(x))$. In modern language, this condition
asserts that $\sigma$ is a \emph{partial action} 
of $\mathbb{R}$ on $M$.

Partial actions were originally introduced by Exel and
McClanahan in the 1990s, by isolating and abstracting the conditions 
observed in the context described above: 
a partial action of a discrete group $G$ on a topological space $X$ is a 
collection $\{X_g\colon g\in G\}$ of open sets of $X$, and homeomorphisms
$\sigma_g\colon X_{g^{-1}}\to X_g$ such that $\sigma_1=\id_X$ and $\sigma_{gh}$ extends $\sigma_g\circ \sigma_h$ wherever the decomposition is well-defined.
The notion of a \emph{global} (or ordinary) action is obtained by taking $X_g=X$
for all $g\in G$.
We refer the reader to the recent book \cite{Exe_book_2017} for a modern 
treatment of this topic and historical references. 
The study of partial actions (both on topological spaces and on $C^*$-algebras) 
has been very fruitful, and has shed new light on the study of several 
objects. For example, the fact that the solutions of a differential equation
on a \emph{compact} manifold are defined on all $\mathbb{R}$ can be easily
proved in this more abstract setting; see Proposition~2.4 in~\cite{Aba_enveloping_2003}.

A typical example of a partial action is obtained by starting with a global
action $\beta\colon G\to \Aut(B)$, a not necessarily invariant ideal
$A$ in $B$, and setting $A_g=A\cap \beta_g(A)$ with 
$\alpha_g=\beta_g|_{A_{g^{-1}}}$ for all $g\in G$. Partial actions obtained in
this way are called \emph{globalizable}, and the \emph{globalization problem} asks to determine whether a given
partial action is globalizable, and in case it is, describe its 
globalization; see Section~3 of~\cite{Aba_enveloping_2003}. 
As it turns out, not every partial action is globalizable (a necessary and sufficient condition is given in~\cite{Fer_constructions_2018}).
Even when a globalization exists, identifying it is often a challenging task,
and its dynamical properties may differ significantly from those of $\alpha$. 

Given a partial group action $\alpha$ of a group $G$ on a \ca\ $A$,
one can construct its \emph{crossed product} $A\rtimes_\alpha G$;
see Section~I.8 in~\cite{Exe_book_2017}. 
Large families of 
\ca s can be naturally described as partial crossed products, typically 
with commutative \ca s, even in situations where similar descriptions 
do not exist for global crossed products. For example, every unital AF-algebra arises
as the crossed product of a partial homeomorphism of a totally disconnected 
compact space,
while no unital AF-algebra arises as the crossed product of a homeomorphism. 

It is therefore particularly important to develop tools to study partial 
crossed products. There have been a number of advancements in this direction,
for example in what refers to $K$-theory \cite{Exe_circle_1994}, Takai duality \cite{Aba_enveloping_2003}. On the other hand, 
the study of partial actions of \emph{finite} groups remains 
conspicuously underdeveloped. Indeed, and even in the globalizable case, virtually 
all averaging arguments (and their consequences) that are 
standard for global actions, completely break 
down in this setting. The lack of approximate identities that
are compatible with the partial action is also a source of difficulties in this setting. The goal of the present work is to make advances in the study of the 
structure of crossed products by partial actions of finite groups.

In the modern literature in $C^*$-dynamical systems, 
several Rokhlin-type properties play an increasingly central role in the study
of crossed products; see \cite{Izu_finiteI_2004, OsaPhi_crossed_2012, HirWinZac_rokhlin_2015, SzaWuZac_rokhlin_2014, GarHir_strongly_2018}. 
Their wide and fruitful applicability in the global setting make
the extension of this theory to the partial setting worthy of exploration. 

Motivated by \cite{HirWinZac_rokhlin_2015}, we define and 
study the notion of 
\textit{Rokhlin dimension} in the partial setting;
see \autoref{df:Rdim}. The theory that we develop here exhibits phenomena that 
cannot be expected for global actions. Among others, the Rokhlin dimension of a globalizable partial action does not agree with that
of its globalization; see \autoref{eg:RdimGlobDifferent}. A notable exception 
is the unital case:

\begin{thmintro}(\autoref{thm:RdimGlobaliz})
Let $\alpha$ be a globalizable partial action of a finite group
on a \emph{unital} $C^*$-algebra, and let $\beta$ be its globalization. 
Then
\[\dimRok(\alpha)=\dimRok(\beta) \ \ \ \text{ and } \ \ \ \cdimRok(\alpha)=\cdimRok(\beta).\]
\end{thmintro}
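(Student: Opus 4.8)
The plan is to relate Rokhlin towers for $\alpha$ to those for $\beta$ through the projection $1_A$, exploiting the fact that in the unital case $1_A$ turns out to be \emph{central} in $B$. First I would record the relevant structure. Since $G$ is finite, $B=\sum_{g\in G}\beta_g(A)$ is a finite sum of closed ideals, hence closed. Because $A$ is a unital ideal of $B$ with unit $1_A$, for every $b\in B$ both $1_A b$ and $b1_A$ lie in $A$, so $1_A b 1_A = 1_A b$ and $1_A b 1_A = b1_A$; thus $1_A$ is central in $B$. Consequently each $q_g:=\beta_g(1_A)$ is a central projection of $B$, and a computation with the commuting projections $q_g$ (using that $\bigvee_g q_g$ acts as a unit on $B=\sum_g q_gB$) shows that $B$ is unital with $1_B=\bigvee_g q_g$. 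Moreover $A=1_AB$ is a direct summand, the domains are $A_g=A\cap\beta_g(A)=1_Aq_gB$, cut out by the central projection $p_g:=1_Aq_g$ of $A$, and $\alpha_h=\beta_h|_{A_{h^{-1}}}$. Finally, cutting by $1_A$ identifies the central sequence algebra $A_\infty\cap A'$ with the corner $1_A(B_\infty\cap B')$, compatibly with $\alpha_\infty$ being the restriction of $\beta_\infty$; both Rokhlin dimensions are computed by Rokhlin towers in these algebras. (If $\dimRok(\alpha)$ or $\dimRok(\beta)$ is infinite, the statement follows from the two inequalities below with the convention $\inf\varnothing=+\infty$.)

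\textbf{Step 1: $\dimRok(\alpha)\le\dimRok(\beta)$ and $\cdimRok(\alpha)\le\cdimRok(\beta)$.} Given a Rokhlin system $(f^{(i)}_k)_{k\in G,\,0\le i\le d}$ for $\beta$ in $B_\infty\cap B'$, put $e^{(i)}_k:=1_Af^{(i)}_k\in A_\infty\cap A'$. These are positive contractions, orthogonal in $k$ for each $i$, with $\sum_{i,k}e^{(i)}_k=1_A$; and using $\beta_h(q_{h^{-1}})=1_A$ together with the Rokhlin relation for $\beta$ one checks $\alpha_h(p_{h^{-1}}e^{(i)}_k)=p_h e^{(i)}_{hk}$. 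Compression by the central projection $1_A$ preserves commutation of towers, so the same works for $\cdimRok$. (This direction uses only that $1_A$ is central, not unitality of $A$ itself, and will presumably be part of a more general statement in the paper.)

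\textbf{Step 2: $\dimRok(\beta)\le\dimRok(\alpha)=:d$ — the main point.} For $\varnothing\ne S\subseteq G$ set $r_S:=\prod_{g\in S}q_g\prod_{g\notin S}(1_B-q_g)$; these are pairwise orthogonal central projections of $B$ with $\sum_{\varnothing\ne S}r_S=1_B$ (the $S=\varnothing$ term vanishes since $\bigvee_g q_g=1_B$) and $\beta_h(r_S)=r_{hS}$. Fix $g_S\in S$ for each $S$; since $r_S\le q_{g_S}$, the projection $\beta_{g_S^{-1}}(r_S)$ lies below $1_A$, hence is a central projection of $A$. Given a Rokhlin system $(e^{(i)}_k)_{k\in G,\,0\le i\le d}$ for $\alpha$ in $A_\infty\cap A'$, set
\[
f^{(i)}_k:=\sum_{\varnothing\ne S\subseteq G}\beta_{g_S}\bigl(\beta_{g_S^{-1}}(r_S)\,e^{(i)}_{g_S^{-1}k}\bigr).
\]
Since $\beta_{g_S}$ carries $\beta_{g_S^{-1}}(r_S)B_\infty$ onto $r_SB_\infty$, the $S$-summand lies in $r_SB_\infty$, so orthogonality of the $r_S$ and of the $\alpha$-system shows that the $f^{(i)}_k$ are positive contractions, orthogonal in $k$, lying in $B_\infty\cap B'$, with $\sum_{i,k}f^{(i)}_k=\sum_S r_S=1_B$. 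The only nontrivial point is the Rokhlin relation $\beta_h(f^{(i)}_k)=f^{(i)}_{hk}$; after reindexing it amounts to the assertion that, for each $T$ and each $m\in G$, the element $\beta_g\bigl(\beta_{g^{-1}}(r_T)\,e^{(i)}_{g^{-1}m}\bigr)$ is independent of the choice of $g\in T$. For $g_0,g_0\ell\in T$, writing $s:=\beta_{g_0^{-1}}(r_T)$ one has $s\le1_A$ and also $s\le\beta_\ell(1_A)$ (because $\beta_{g_0}(s)=r_T\le q_{g_0\ell}=\beta_{g_0\ell}(1_A)$), and a short computation using the $\alpha$-equivariance relation together with these two dominations yields $\beta_\ell\bigl(\beta_{\ell^{-1}}(s)\,e^{(i)}_{\ell^{-1}j}\bigr)=s\,e^{(i)}_j$ for all $j$ — exactly the claimed independence. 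Hence $(f^{(i)}_k)$ is a Rokhlin system for $\beta$ with $d+1$ levels, so $\dimRok(\beta)\le d$; and if the $\alpha$-system has commuting towers, applying $\beta_{g_S}$ to commuting elements (distinct $S$-summands being orthogonal) shows the same for $(f^{(i)}_k)$, giving $\cdimRok(\beta)\le\cdimRok(\alpha)$.

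Combining the two steps proves the theorem. \textbf{The main obstacle} is the equivariance check in Step 2: the Rokhlin tower for $\beta$ is assembled piecewise, over the ``support-pattern'' decomposition $\{r_S\}$, out of the tower for $\alpha$, and one must show the result does not depend on the chosen representatives $g_S$; this is the only place where unitality is genuinely used — through the centrality of $1_A$ in $B$ and the fact that the domain projections $1_Aq_g$ of $\alpha$ are central. A more conceptual but less self-contained packaging of Step 2 would be to observe that $(B,\beta)$ is the direct sum, over the $G$-orbits of nonempty subsets $S$, of the $G$-systems induced from $\mathrm{Stab}(S)\curvearrowright r_SB$, each of which is $\beta_{g_S}$-conjugate to a restriction of $\alpha$ to an invariant central summand of $A$, and then to invoke permanence of Rokhlin dimension under induction from subgroups and under such restrictions.
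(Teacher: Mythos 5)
Your Step 1 has a genuine gap, and it is exactly at the point where the paper has to work hardest. In \autoref{df:Rdim}, Rokhlin towers for the partial action $\alpha$ are required to satisfy $f_g^{(j)}\in A_g$, i.e.\ in your picture $e^{(i)}_k$ must be supported under the central projection $p_k=1_A\beta_k(1_A)$, not merely under $1_A$. The elements $e^{(i)}_k:=1_Af^{(i)}_k$ you produce from a Rokhlin system for $\beta$ lie in $A_\infty\cap A'$ and do satisfy the relation $\alpha_h(p_{h^{-1}}e^{(i)}_k)=p_he^{(i)}_{hk}$, but there is no reason whatsoever that $1_Af^{(i)}_k$ lies in $A_k=1_A\beta_k(1_A)B$: the global towers for $\beta$ need not be supported under $\beta_k(1_A)$. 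So what you obtain is only a ``weak'' system (no domain membership), which is not what \autoref{df:Rdim} asks for; and if you repair membership by cutting further, i.e.\ replacing $e^{(i)}_k$ by $p_ke^{(i)}_k$, then condition (3) fails, since the mass of $f^{(i)}_k$ sitting under $1_A\bigl(1_B-\beta_k(1_A)\bigr)$ is lost and $\sum_{i,k}p_kf^{(i)}_k\neq 1_A$ in general. Closing this gap is precisely the content of the paper's \autoref{lemma: Restriction intermediate lemma}: before cutting down, one redistributes the towers for $\beta$ (using the inclusion--exclusion family of central projections $p_g\in A$ with $1_B=\sum_{g}\beta_g(p_g)$ --- essentially your $r_S$ partition) so that $f_1^{(j)}\in A$, equivalently $f_g^{(j)}\in\beta_g(A)$, after which $f_g^{(j)}1_g\in A_g$ and the sum condition survives. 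Your one-line assertion that ``compression by the central projection $1_A$'' does the job, and your remark that this direction is the easy one, miss exactly this point; equivalently, if you insist on working with the weak systems, you owe a proof that the weak notion coincides with \autoref{df:Rdim}, which is the same redistribution problem in disguise.

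By contrast, your Step 2 (the inequality $\dimRok(\beta)\le\dimRok(\alpha)$, which you call the main point) is essentially correct: the $r_S$ decomposition, the independence-of-representative computation via $s\le p_\ell$ and the relation $\alpha_\ell(p_{\ell^{-1}}e^{(i)}_{\ell^{-1}j})=p_\ell e^{(i)}_j$, all check out. It is, however, the direction the paper disposes of directly and more simply, by setting $\widetilde f_g^{(j)}=\beta_g(f_1^{(j)})$ with $f_g^{(j)}$ chosen via \autoref{prop:RdimReductionUnital}, so the elaborate machinery is deployed on the wrong side. Two secondary caveats: the reformulation of \autoref{df:Rdim} as exact systems in $A_\infty\cap A'$ (simultaneously exact equivariance, orthogonality, sum and centrality) is asserted rather than proved --- it does hold in the unital case by \autoref{prop:RdimReductionUnital} plus a diagonal argument, but that argument requires separability (or a relative-commutant formulation over separable subalgebras), a hypothesis the theorem does not impose and the paper's finitary $(F,\ep)$ arguments avoid.
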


Our original motivation was the study of the structure of 
the crossed product, particularly from the point of view of the classification
programme for simple nuclear \ca s; see \cite{EllTom_regularity_2008}. 
As it turns out,
this is technically much more complicated than in the global setting, and 
tackling this problem required us to first develop a decomposition theory for 
partial actions of finite groups into iterated extensions of relatively
simpler systems; see \cite{AbaGarGef_decomposable_2020} and particularly 
Section~6 there. Here, we prove:

\begin{thmintro}(\autoref{thm:CPRdim})
The following properties are inherited by crossed products or fixed point
algebras by partial actions of finite groups with $\dimRok<\I$:
\be\item Having finite nuclear dimension or decomposition rank. For example,
	\[ \dimnuc(A\rtimes_{\alpha} G)\leq (|G|-1)(\dim_{\mathrm{nuc}}(A)+1)(\dim_{\mathrm{Rok}}(\alpha)+1)+\dimnuc(A). \]
\item When $\cdimRok(\alpha)<\I$, having finite stable/real rank.
For example,
\[\mathrm{sr}(A\rtimes_\alpha G)\leq \frac{|G|(\sr(A)+\cdimRok(\alpha)+3)-2}{2}.
\]
\item When $\cdimRok(\alpha)<\I$, absorbing a given strongly self-absorbing \ca.
\ee
\end{thmintro}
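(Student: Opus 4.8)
The plan is to follow the template that has proven successful in the global setting (as in \cite{HirWinZac_rokhlin_2015}), namely: (i) reduce statements about $A \rtimes_\alpha G$ to statements about the fixed point algebra $A^\alpha$, or more precisely to statements about a suitable corner of the crossed product; (ii) use the finiteness of $\dimRok(\alpha)$ (resp.\ $\cdimRok(\alpha)$) to build, via the Rokhlin towers, order-zero maps from finite-dimensional \cas\ into a relative commutant, which exhibit $A\rtimes_\alpha G$ as being ``approximately covered'' by finitely many copies of matrix amplifications of $A$; and (iii) invoke the known permanence properties of $\dimnuc$, $\dr$, $\sr$, $\RR$, and $\mathcal D$-absorption under the relevant operations (finite direct sums, extensions, inductive limits, passing to ideals and quotients, and tensoring with matrix algebras). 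The key conceptual difficulty, flagged already in the introduction and abstract, is that in the partial setting $A\rtimes_\alpha G$ is \emph{not} locally approximated by matrices over $A$, and $A^\alpha$ is \emph{not} in general Morita equivalent to the crossed product; so the naive global argument cannot be applied directly.

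The main step — and the one I expect to be the principal obstacle — is therefore to replace the broken averaging/approximation arguments by the decomposition theory for partial actions of finite groups developed in \cite{AbaGarGef_decomposable_2020}, particularly Section~6 there. The idea is to decompose $\alpha$ into an iterated extension
\[
0 \longrightarrow I \longrightarrow A \longrightarrow A/I \longrightarrow 0
\]
of partial systems that are ``relatively simpler'' — ideally, pieces on which the partial action is either \emph{global} (so classical Rokhlin-dimension estimates apply) or for which the crossed product is a matrix amplification of a corner. One must check that $\dimRok$ and $\cdimRok$ behave well under this decomposition (do not increase when passing to invariant ideals and quotients), that finite nuclear dimension / decomposition rank / stable rank / real rank / $\mathcal D$-absorption all pass through extensions with the appropriate (sub)additivity, and then assemble the global bound by induction on the length of the decomposition series, with the factor $|G|-1$ (resp.\ $|G|$) in the stated inequalities arising from the number of nontrivial group elements entering the tower relations. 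Crucially one needs the crossed-product functor to be exact for partial actions of finite groups and to commute with the relevant inductive limits, which should be available from the general theory in \cite{Exe_book_2017}.

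For the stable-rank and real-rank statements in (2) one additionally needs the \emph{commutative} Rokhlin dimension $\cdimRok(\alpha)<\I$: the commutativity of the Rokhlin elements is what allows the order-zero maps to be used in the stable-rank estimates (à la the global results relating $\sr$ of the crossed product to $\sr(A)$ and $\cdimRok$), since stable rank — unlike nuclear dimension — is not well behaved under arbitrary order-zero perturbations. Here I would first establish the estimate on each global piece of the decomposition using the global analogue of the inequality, then propagate it through the extensions using the standard bound $\sr(B)\le \max\{\sr(I), \sr(B/I), \ldots\}$ (with its usual $+1$ corrections) and through matrix amplification via $\sr(M_n(B)) \le \lceil (\sr(B)-1)/n \rceil + 1$; the explicit constant in the displayed inequality should fall out after optimizing these. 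For (3), $\mathcal D$-absorption, the argument is softer: $\mathcal D$-absorption passes to ideals, quotients, extensions (by a result of Toms–Winter type) and matrix amplifications, so once one knows each global piece of the decomposition has $\mathcal D$-absorbing crossed product — which follows from the global Rokhlin-dimension theory — the iterated-extension argument closes the proof with no numerical bookkeeping. The real work, to reiterate, is verifying that the decomposition theory of \cite{AbaGarGef_decomposable_2020} interacts correctly with $\dimRok$ and $\cdimRok$ so that the induction can even get started.
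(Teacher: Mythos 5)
Your proposal follows essentially the same route as the paper: write $\alpha$ as the iterated equivariant extension of decomposable partial actions coming from Theorem~6.1 of \cite{AbaGarGef_decomposable_2020}, use \autoref{prop:PermProp} to keep $\dimRok$ (resp.\ $\cdimRok$) bounded along the ideals and quotients of that series, reduce each decomposable piece to global subsystems, invoke the known global crossed-product results, and propagate the property \textbf{P} through the $|G|-1$ extensions of crossed products (this, not the number of nontrivial group elements in the tower relations, is where the factor $|G|-1$ in the estimates comes from). The one substantive ingredient you flag as ``the real work'' but do not supply is exactly what the paper proves before attacking the theorem: by Theorem~C of \cite{AbaGarGef_decomposable_2020}, the crossed product of a decomposable piece is a finite direct sum of algebras $M_{m_\tau}\big(D_\tau\rtimes H_\tau\big)$ with $H_\tau\leq G$ acting \emph{globally} on the ideal $D_\tau$, and \autoref{thm:dimRokResH} shows that $\dimRok$ of this restricted global $H_\tau$-action is dominated by $\dimRok$ of the decomposable partial action; without that comparison the induction indeed cannot start, and note that the global input (your condition on crossed products by $G$) must in fact be available for all subgroups $H_\tau$ of $G$, not just $G$ itself. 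Finally, your sketch addresses only $A\rtimes_\alpha G$: since $A^\alpha$ is not Morita equivalent to the crossed product in the partial setting (\autoref{eg:NotMEq}), the fixed-point-algebra half of the statement needs the parallel argument the paper runs, applying fixed point algebras to the same extensions and using Theorem~4.5 of \cite{AbaGarGef_decomposable_2020} together with the Morita equivalence $B^\beta\sim_M B\rtimes_\beta H$ for the global pieces. With those two points filled in, your plan is the paper's proof.
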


For unital partial actions, even more can be said; see \autoref{thm:CPRdimUnital}.
Most remarkably, the UCT is preserved in the commuting towers version.
In this case, we also show that $A^\alpha$ is Morita equivalent to 
$A\rtimes_\alpha G$ (see \autoref{thm:MoritaEqFPtCP}, a fact that 
(rather surprisingly) fails if unitality of $A$ is dropped (see \autoref{eg:NotMEq}).

Our structural results for crossed products are complemented by the fact 
that partial actions with finite Rokhlin dimension are relatively common.
For example, we show that this notion is equivalent to freeness in the 
commutative setting.

\begin{thmintro}(\autoref{thm:FreeFiniteRdim})
Let $\sigma$ be a partial action of a finite group $G$ on a locally compact Hausdorff
space $X$ with $\dim(X)<\I$. Then $\dimRok(\sigma)<\I$ if and only if $\sigma$ is free, in which case we have
\[\dimRok(\sigma)\leq (|G|-1)\dim(X).\]
\end{thmintro}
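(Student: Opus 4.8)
The plan is to prove the two directions separately, with the implication ``finite Rokhlin dimension $\Rightarrow$ freeness'' being the easy direction and the converse requiring the bulk of the work.

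\textbf{The easy direction.} First I would show that if $\sigma$ is not free, then $\dimRok(\sigma)=\infty$. If $\sigma$ fails to be free, there is a point $x\in X_{g^{-1}}$ with $\sigma_g(x)=x$ for some $g\neq 1$; the orbit-type analysis then produces a $\sigma$-invariant closed subset on which the induced action has a global fixed point for a nontrivial group element. Passing to the restricted partial action (and using that Rokhlin dimension does not increase when passing to invariant ideals/quotients, a fact I would either cite from \cite{AbaGarGef_decomposable_2020} or prove by restricting the Rokhlin towers), one reduces to a genuinely global situation where a point is fixed by a nontrivial group element. For such an action the obstruction is classical: evaluating the Rokhlin tower relations at the fixed point forces a contradiction with the mutual orthogonality of the towers, exactly as in the global case treated in \cite{HirWinZac_rokhlin_2015}. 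This gives $\dimRok(\sigma)=\infty$.

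\textbf{The hard direction.} For the converse, assume $\sigma=\{\sigma_g\colon X_{g^{-1}}\to X_g\}$ is free with $\dim(X)=d<\infty$; I want to construct, for each positive-dimensional ``colour'' class, equivariant maps implementing the Rokhlin tower condition. The idea is to build a finite open cover of $X$ by sets that are ``small'' relative to the partial action, refine it to control multiplicity using $\dim(X)=d$, and then use freeness to separate the $G$-translates. Concretely: for each $x\in X$, freeness gives an open neighbourhood $U_x$ and, for those $g$ with $x\in X_{g^{-1}}$, the point $\sigma_g(x)$ is distinct from $x$, so one can shrink so that $U_x, \sigma_{g_1}(U_x),\ldots$ are pairwise disjoint over the relevant group elements (this is where the partiality forces care: one only translates by $g$ when the domain condition holds). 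Using a partition of unity subordinate to a refinement of $\{U_x\}$ with multiplicity at most $d+1$, one obtains $d+1$ subordinate families, each a disjoint union over its index set; on each such family one can define the Rokhlin elements $f^{(j)}_g$ as a single bump function transported by $\sigma_g$ across the $G$-``orbit'' of the supporting set. The count: $|G|$ group elements, but one colour is free (the contractibility / the ``$-1$'' comes from the fact that the top tower can be absorbed, as in the global bound $(|G|-1)$ rather than $|G|$ colours), and $d+1$ colours from the covering dimension, giving an upper bound for $\dimRok(\sigma)+1$ of $(|G|-1)(d+1)$... wait — the stated bound is $(|G|-1)\dim(X)$, i.e. $\dimRok(\sigma)\le (|G|-1)d$, so one must be sharper: the combinatorics should pair the group direction and the space direction more efficiently, presumably by treating the case $d=0$ separately (where $X$ is totally disconnected and one gets $\dimRok=0$ by a clopen partition argument) and then doing an induction on $d$ by peeling off a closed nowhere-dense set, or by using a more economical cover where the ``$+1$'' in $d+1$ is absorbed into the group count. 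I would follow the global template of \cite{HirWinZac_rokhlin_2015} for the exact bookkeeping, adapting each covering/partition-of-unity step to respect the domains $X_g$.

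\textbf{Main obstacle.} The principal difficulty is the one flagged throughout the introduction: no averaging, and no partial-action-compatible approximate identity. Concretely, in the global case one builds the Rokhlin maps on a fundamental domain and spreads them by the group action; here the ``fundamental domain'' for a partial action is not a clean object, and transporting a bump function by $\sigma_g$ is only legal on $X_{g^{-1}}$, so the towers one writes down will a priori only be defined on open pieces and need not glue to continuous functions on all of $X$. Overcoming this requires carefully choosing the initial cover so that each member, together with all of its legitimate $G$-translates, sits inside a single $X_g$ or inside $X\setminus X_g$ as appropriate — i.e. controlling the geometry of the domains relative to the cover. The freeness hypothesis is exactly what makes this possible (it lets one separate $x$ from $\sigma_g(x)$ uniformly on a neighbourhood), and the finite covering dimension is what keeps the number of colours finite; marrying these two inputs with the partiality constraints, and getting the constant down to $(|G|-1)\dim(X)$ rather than the naive $(|G|-1)(\dim(X)+1)$, is the crux of the proof.
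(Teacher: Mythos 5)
The easy direction of your proposal is essentially sound: evaluating the Rokhlin relations at a fixed point against a bump function contradicts the orthogonality and covering conditions, which is exactly the argument of \autoref{prop:RdimFreeness}. (Your intermediate reduction ``to a genuinely global situation'' is both unnecessary and slightly off -- the restriction of a partial action to the closed orbit of a fixed point is still only a partial action -- but the direct evaluation argument already works without it.)

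The hard direction, however, has a genuine gap. Your plan is to adapt the global covering/partition-of-unity template, but the step you defer -- choosing the cover so that each member together with all of its legitimate translates is compatible with the domains, and gluing the transported bumps into \emph{globally defined continuous} towers satisfying condition (1) of \autoref{df:Rdim} near the boundaries of the sets $X_g$ -- is precisely the crux, and you offer no construction for it; naming the obstacle and asserting that ``freeness is exactly what makes this possible'' is not a proof. This is exactly where the global template breaks: for a free partial action the quotient map $X\to X/G$ need not admit local cross-sections, the ``orbit type'' of a point (the set of $g$ with $x\in X_g$) varies across $X$, and a bump transported by $\sigma_g$ has no canonical continuous extension past the boundary of $X_{g^{-1}}$. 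The paper does not argue by covers at all; instead it stratifies the action into decomposable pieces via the iterated-extension decomposition (\autoref{thm: FinGrpIntersProp}), handles each free decomposable piece by reducing to the \emph{global} subsystems $H_\tau\curvearrowright X_\tau$ (\autoref{thm:dimRokResH} together with \autoref{prop:FreeRokDimDecomp} and the Hirshberg--Phillips bound), and then proves that finite Rokhlin dimension passes through equivariant extensions of commutative systems (\autoref{prop:ExtRdim}); that extension step itself requires the exact orthogonalization of towers (\autoref{lma:CommAlgOrthogonalTowers}), the domain-respecting orthogonal lifting lemma (\autoref{lma:liftingCn}), $G$-invariant approximate identities, and the minimal partial unitization to reduce to compact $X$. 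None of these ingredients, nor any substitute for the gluing step, appears in your sketch, so the towers are never actually produced. Finally, your bookkeeping is left unresolved: you concede that your count gives $(|G|-1)(\dim(X)+1)$ and that reaching $(|G|-1)\dim(X)$ would ``presumably'' need a sharper cover -- a placeholder rather than an argument; for what it is worth, the estimate actually established in the body of the paper (\autoref{thm:FreeFiniteRdim}) is $(|G|-1)(\dim(X)+1)$, obtained by summing $\dimRok(\delta^{(k)})+1$ over the $|G|-1$ extensions.
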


The result in the global case is implicit 
\cite{HirPhi_rokhlin_2015,
Gar_rokhlin_2017}, and is an easy consequence of the
existence of local cross-sections for the 
quotient map $\pi\colon X\to X/G$. 
However, the proof in the partial setting is considerably
more complicated since for free partial actions there
may not exist local cross-sections for $\pi$. The proof 
in our context is quite involved and occupies most of Section~5. 
The main technical ingredient is the fact (\autoref{prop:ExtRdim}) 
that an extension of topological partial actions with finite 
Rokhlin dimension again has finite Rokhlin dimension.
Roughly speaking, one needs to lift Rokhlin towers from the quotient to the algebra, 
while at the same time respecting the domains of the partial 
action. The fact that the coefficient algebra is commutative seems
to be crucial for this lifting problem to have a solution.

\section{Rokhlin dimension for partial actions of finite groups}
\label{sec: DefRokDim}

In this section we define Rokhlin dimension 
for partial actions. 

\begin{df} \label{df:Rdim}
Let $\alpha=((A_g),(\alpha_g))_{g\in G}$ be a partial action of a finite group $G$ on a $C^*$-algebra $A$.
For $d\in\N$, we say that $\alpha$ has \emph{Rokhlin dimension at most} $d$, and write $\dimRok(\alpha)\leq d$,
if for every $\ep>0$ and every finite subset $F\subseteq A$, there exist positive contractions $f_g^{(j)}\in A_g$, for $g\in G$ and $j=0,\ldots,d$,
satisfying:
\be \item $\left\|\left(\alpha_g(f_h^{(j)}x)-f_{gh}^{(j)}\alpha_g(x)\right)a\right\|<\ep$ for all $g,h\in G$, $j=0,\ldots, d$, $a\in F$ and $x\in A_{g^{-1}}\cap F$;
\item $\left\|f_{g}^{(j)}f_{h}^{(j)}a\right\| < \ep$ for $j=0,\ldots,d$, $g,h \in G$ with $g \neq h$ and $a\in F$;
\item $\left\| \left(\sum_{j=0}^{d}\sum_{g \in G}  f_{g}^{(j)}\right)a-a\right\|<\ep$ for all $a\in F$;
\item $\left\|\left(f_{g}^{(j)}b-bf_g^{(j)}\right)a\right\| < \ep$ for all $j =0,\ldots, d$, $g\in G$, and $a,b\in F$.
\ee
Moreover, we say that $\alpha$ has \emph{Rokhlin dimension with commuting towers at most} $d$, and write $\cdimRok(\alpha)\leq d$,
if for every $\ep>0$ and every finite subset $F\subseteq A$, there exist positive contractions $f_g^{(j)}\in A_g$, for $g\in G$ and $j=0,\ldots,d$,
satisfying conditions (1) through (4) above, in addition to
\be
\item[(5)] $\left\|\left(f_{g}^{(j)}f_{h}^{(k)}-f_{h}^{(k)}f_{g}^{(j)}\right)a\right\| < \ep$ for all $j,k =0,\ldots, d$, $g,h\in G$ and $a\in F$.
\ee
In either case, we call the elements $f_g^{(j)}$ above
\emph{Rokhlin towers} for $(F,\ep)$.

We define the \emph{Rokhlin dimension} of $\alpha$ by
\[\dimRok(\alpha)=\min\{d\in \N\colon \dimRok(\alpha)\leq d\},\]
and define the \emph{Rokhlin dimension with commuting towers} $\cdimRok(\alpha)$ similarly. 
\end{df}

The multiplicative witness $a\in F$ that appears in (1) through (5) also appears in the definition of Rokhlin 
dimension for \emph{global} actions
setting (\cite{HirPhi_rokhlin_2015}), and it can be omitted when $A$ is unital.
On the other hand, the witness $x\in F\cap A_{g^{-1}}$ 
is a conceptually new condition, which cannot be omitted
even if $A$ is unital.

At the early stages of this project, it was unclear whether
one should not instead require the stronger condition
$\|\alpha_g(f_h^{(j)}x)-f_{gh}^{(j)}\alpha_g(x)\|<\ep\|x\|$
for all $g,h\in G$, $j=0,\ldots,d$ and \emph{all} 
$x\in A_{g^{-1}}$. As it turns out, the stronger condition
implies that the given partial action is in fact globalizable,
which suggested that it was not the right notion to consider.

Unlike in the case of global actions,
the different elements within one tower are not 
``interchangeable'', as they tend to have different ``sizes''.
In fact, the positive contraction corresponding to the unit of the group is usually much larger than the others. 
An extreme example of this situation is the following:

\begin{eg}\label{eg:trivialRp}
Let $G$ be a finite group, and let $A$ be a unital \ca. We define the
\emph{trivial partial action} of $G$ on $A$ by 
setting $A_g=\{0\}$ for $g\in G\setminus\{1\}$. 
This partial action has
Rokhlin dimension zero, with Rokhlin towers given by 
$f_1=1_A$ and $f_g=0$ for $g\in G\setminus\{1\}$. Note that
$A\rtimes_\alpha G=A$. Moreover, this is the only partial action
of $G$ on $A$ with the 1-decomposition property 
(\autoref{df:nIntProp}); see Example~2.6 in~\cite{AbaGarGef_decomposable_2020}. 
\end{eg}

Next, we show that the condition (1) in
\autoref{df:Rdim} can be strengthened in 
the case of \emph{unital} partial actions (that is, partial actions whose domains are unital). 

\begin{rem}\label{rem:UnitsIdeals}
Let $\alpha$ be a unital partial action of a finite group $G$ on a \ca\ $A$, with units $1_g\in A_g$, for $g\in G$.
Then 	
$1_{gh}1_g=\alpha_g(1_h1_{g^{-1}})$,
for all $g,h\in G$.
\end{rem}


\begin{prop} \label{prop:RdimReductionUnital}
Adopt the notation from \autoref{df:Rdim}, and suppose that $\alpha$ is a unital partial action. For $g\in G$, denote
by $1_g$ the unit of $A_g$. Then 
condition (1) in \autoref{df:Rdim} can be replaced by
\be
\item[(1')] $\alpha_g(f_h^{(j)}1_{g^{-1}})=f_{gh}^{(j)}1_g$ for all $g,h\in G$, and for all $j=0,\ldots, d$.
\ee
\end{prop}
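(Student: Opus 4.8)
The plan is to show that conditions (1) and (1$'$) are equivalent *given* that the other conditions hold, by approximating arbitrary Rokhlin towers with ones satisfying the sharper algebraic relation. The key observation is that in a unital partial action the domains $A_g$ are unital ideals of $A$, hence complemented: $A \cong A_g \oplus A_g^\perp$ as a direct sum of ideals, with $1_g$ the central projection onto the first summand. In particular $A_g$ is a direct summand, multiplication by $1_g$ is the corresponding conditional expectation, and $\alpha_g \colon A_{g^{-1}} \to A_g$ is an isomorphism of unital \cas. Since $1_g$ is central in $A$, any positive contraction $f \in A_g$ already satisfies $f 1_g = f$, and condition (4) of \autoref{df:Rdim} becomes automatic when the witnesses $b$ range over the central units $1_g$.

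First I would prove the easy direction: (1$'$) $\Rightarrow$ (1). If $f_g^{(j)} \in A_g$ are positive contractions with $\alpha_g(f_h^{(j)} 1_{g^{-1}}) = f_{gh}^{(j)} 1_g$, then for $x \in A_{g^{-1}}$ we have $f_h^{(j)} x = f_h^{(j)} 1_{g^{-1}} x$ (as $x \in A_{g^{-1}}$ forces $x = 1_{g^{-1}} x$), so
\[
\alpha_g(f_h^{(j)} x) = \alpha_g(f_h^{(j)} 1_{g^{-1}}) \alpha_g(x) = f_{gh}^{(j)} 1_g \alpha_g(x) = f_{gh}^{(j)} \alpha_g(x),
\]
using that $\alpha_g(x) \in A_g$ and $1_g$ is a unit for $A_g$. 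Hence the left side of (1) is identically zero, and (1$'$) implies the (a priori weaker) condition (1) with $\ep = 0$.

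The substantive direction is (1) $\Rightarrow$ (1$'$), i.e.\ producing towers satisfying the exact relation. Here I would \emph{define} new towers directly by the formula dictated by (1$'$): fix a reference and set, for each $g, h \in G$ and each $j$,
\[
\widetilde f_g^{(j)} := \alpha_{g}\bigl(f_{1}^{(j)} 1_{g^{-1}}\bigr) \in A_g,
\]
i.e.\ transport the ``$h = 1$'' tower element across the partial action; one must check using \autoref{rem:UnitsIdeals} and the cocycle identity for $\alpha$ that $\alpha_g(\widetilde f_h^{(j)} 1_{g^{-1}}) = \widetilde f_{gh}^{(j)} 1_g$ holds on the nose (this is a short computation with central units $1_g 1_{gh} = \alpha_g(1_h 1_{g^{-1}})$). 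The point is then that $\widetilde f_g^{(j)}$ is \emph{close} to the original $f_g^{(j)}$ on the relevant finite set: applying (1) with $h = 1$, $x = 1_{g^{-1}} \in A_{g^{-1}}$ (adding $1_{g^{-1}}$ and the relevant units to $F$ if necessary) gives $\|(\alpha_g(f_1^{(j)} 1_{g^{-1}}) - f_g^{(j)} 1_g)\, a\| < \ep$, and $f_g^{(j)} 1_g = f_g^{(j)}$, so $\|(\widetilde f_g^{(j)} - f_g^{(j)})\, a\| < \ep$ for $a \in F$. Since conditions (2)--(5) are stated with a multiplicative witness $a \in F$ and are stable under such small perturbations of the tower elements (a routine $\ep$-pushing argument, shrinking $\ep$ and enlarging $F$ at the outset), the new towers $\widetilde f_g^{(j)}$ satisfy (1$'$) exactly and (2)--(5) approximately with the required tolerance.

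The main obstacle is purely bookkeeping: one must be careful that the element $\widetilde f_g^{(j)}$ is genuinely obtained by transporting a *single* designated tower element (say the one at the group unit) rather than an incompatible family, so that the exact relation (1$'$) is internally consistent across all pairs $(g,h)$; this forces the use of the identity $1_{gh} 1_g = \alpha_g(1_h 1_{g^{-1}})$ from \autoref{rem:UnitsIdeals} at the key step. There is no analytic difficulty, since all the maps involved are $\ast$-homomorphisms of unital \cas\ and the units $1_g$ are central; the entire content is that the weak approximate intertwining (1) can be rigidified to the algebraic identity (1$'$) precisely because the domains are direct summands.
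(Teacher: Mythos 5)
Your proposal is correct and follows essentially the same route as the paper: define $\widetilde f_g^{(j)}=\alpha_g(f_1^{(j)}1_{g^{-1}})$, verify (1$'$) exactly using the identity $1_{gh}1_g=\alpha_g(1_h1_{g^{-1}})$ from \autoref{rem:UnitsIdeals}, and transfer conditions (2)--(5) from the original towers via the closeness estimate obtained from condition (1) with $x=1_{g^{-1}}\in F$. (Note that taking the witness $a=1_g\in F$ even upgrades your estimate $\|(\widetilde f_g^{(j)}-f_g^{(j)})a\|<\ep$ to a genuine norm estimate, since the difference lies in $A_g$; this is exactly what the paper does and makes the final perturbation step immediate.)
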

\begin{proof}
We prove the proposition for $\dimRok$, since the proof for $\cdimRok$ is analogous. 
Using the identity $x=x1_{g^{-1}}$ for all $x\in A_{g^{-1}}$ and $g\in G$, one easily shows that 
(1') implies the following identity 
for all $g,h\in G$, $j=0,\ldots,d$ and all $x\in A_{g^{-1}}$
\[\alpha_g(f_h^{(j)}x)=f_{gh}^{(j)}\alpha_g(x).\]
This identity clearly implies (1). Conversely,
let $\ep>0$ and a finite subset $F \subseteq A$ be given.
Without loss of generality, we assume that $F$ consists of contractions and that 
$\{1_g\colon g\in G\}\subseteq F$. Set 
$\ep_0=\frac{\ep}{|G|(d+1)+1}$ and 
find Rokhlin towers $f_g^{(j)}\in A_g$, for $g\in G$ and $j=0,\ldots,d$, satisfying (1),(2), (3), and (4) in
\autoref{df:Rdim} for $(F,\ep_0)$.
Define positive contractions $\widetilde{f}_g^{(j)}\in A_g$, for all $g\in G$ and $j=0,\ldots,d$, by $\widetilde{f}_g^{(j)}=\alpha_g(f_1^{(j)}1_{g^{-1}})$. Since
$1_{g^{-1}}$ belongs to $F$, condition
(1) for $f_1^{(j)}$ 
gives
\[\label{eqn:fgtildeclosef_g}\tag{2.1}
\|\widetilde{f}_g^{(j)}- f_g^{(j)}\|=\|\alpha_g(f_1^{(j)}1_{g^{-1}})-f_g^{(j)}1_g\|<\ep_0
\]
for all $g\in G$ and all $j=0,\ldots,d$. 
We claim that these elements satisfy the conditions in \autoref{df:Rdim} with (1) replaced by (1').

We begin with (1'). For $g,h\in G$ and $j=0,\ldots,d$, we have
\begin{align*}
 \alpha_g(\widetilde{f}_h^{(j)}1_{g^{-1}})&=\alpha_g(\alpha_h(f_1^{(j)}1_{h^{-1}})1_{g^{-1}})\\
 &=\alpha_g(\alpha_h(f_1^{(j)}1_{h^{-1}}1_{{(gh)}^{-1}}))\\
 &=\alpha_{gh}(f_1^{(j)}1_{{(gh)}^{-1}})1_g
 =\widetilde{f}_{gh}^{(j)}1_g.
\end{align*}
Finally, conditions (2), (3) and (4) for the 
$\widetilde{f}_g^{(j)}$ follow by combining 
$(\ref{eqn:fgtildeclosef_g})$ with conditions (2), (3) and (4) for $f_g^{(j)}$. We omit the details.
\end{proof}

\begin{rem} In the context of \autoref{prop:RdimReductionUnital}, 
one can show that condition (2) can be replaced by
\be\item[(2')] $f_g^{(j)}f_h^{(j)}=0$ for all $g,h\in G$ with $g\neq h$ and for all $j=0,\ldots,d$.
\ee
Since we do not need this, we omit its proof. 
We stress the fact that it is in general \emph{not} 
possible to replace (1) and (2) 
\emph{simultaneously} with 
(1') and (2'), since the argument used to get (1') 
from (1) does not preserve (2'), and
vice-versa. 
\end{rem}

We close this section by proving that finite Rokhlin dimension behaves well with respect to restriction to invariant ideals and passage to equivariant quotients. 

\begin{prop}\label{prop:PermProp}
Let $A$ be a \ca, let $G$ be a finite group, and let $\alpha$ be a partial
action of $G$ on $A$. Let $I$ be a $G$-invariant ideal of $A$. We denote by $\alpha|_{I}$ and $\overline{\alpha}$ the induced partial actions of $G$ on $I$ and $A/I$, respectively. Then
\[\dimRok(\alpha|_{I})\leq \dimRok(\alpha)\ \ 
\mbox{ and } \ \ \dimRok(\overline{\alpha})\leq \dimRok(\alpha).\]
Similar estimates hold for $\cdimRok$. \end{prop}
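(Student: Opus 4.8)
The plan is to prove the two inequalities separately, in each case by lifting/pushing down Rokhlin towers and checking that conditions (1)--(4) (and (5) for $\cdimRok$) of \autoref{df:Rdim} are preserved. For the quotient $\overline{\alpha}$ on $A/I$ the argument is essentially formal: given $\ep>0$ and a finite set $\overline{F}\subseteq A/I$, lift $\overline{F}$ to a finite set $F\subseteq A$ of preimages, apply finite Rokhlin dimension of $\alpha$ to $(F,\ep)$ to get positive contractions $f_g^{(j)}\in A_g$, and set $\overline{f_g^{(j)}}$ equal to their images under the quotient map $q\colon A\to A/I$. Since $q$ is a $*$-homomorphism it sends positive contractions in $A_g$ to positive contractions in $(A/I)_g = q(A_g)$ and is equivariant (i.e. $\overline{\alpha}_g\circ q = q\circ\alpha_g$ on the relevant domains), so each of the norm estimates in (1)--(5) passes to the quotient using $\|q(y)\|\le\|y\|$. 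This yields $\dimRok(\overline{\alpha})\le\dimRok(\alpha)$, and the same computation with condition (5) added gives the $\cdimRok$ statement.

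For the restriction $\alpha|_I$ the situation is the reverse and slightly more delicate, because a finite subset of $I$ is in particular a finite subset of $A$, so one can apply finite Rokhlin dimension of $\alpha$ directly to $(F,\ep)$ with $F\subseteq I$; but the resulting positive contractions $f_g^{(j)}$ live in $A_g$, not necessarily in $I_g = I\cap A_g$. The fix is to multiply by an approximate unit of $I$: one augments $F$ to also contain finitely many elements of a quasicentral approximate identity $(e_\lambda)$ for $I$ in $A$, runs the definition for a smaller tolerance $\ep_0$, and then replaces each $f_g^{(j)}$ by $e_\lambda f_g^{(j)} e_\lambda$ (or $e_\lambda^{1/2} f_g^{(j)} e_\lambda^{1/2}$) for a suitable $\lambda$. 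These corrected elements are positive contractions in $I_g$ (since $I$ is an ideal and $e_\lambda\in I$), and because the original $f_g^{(j)}$ nearly commute with the chosen $e_\lambda$'s and because all multiplicative witnesses $a$ lie in $I$ (so $\|e_\lambda a - a\|$ and $\|e_\lambda a e_\lambda - a\|$ are small), conditions (1)--(4) survive the substitution up to an error controlled by $\ep_0$ times a constant depending only on $|G|$ and $d$. For condition (1) one also uses that $\alpha_g$ restricted to $I$ maps $I_{g^{-1}}$ onto $I_g$ and that $e_\lambda\in I$ is fixed enough by the relevant reindexed approximate unit; equivariance of $\alpha$ does the rest. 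Condition (5) is preserved by the same mechanism.

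The main obstacle is the restriction case: ensuring that the truncated towers $e_\lambda f_g^{(j)} e_\lambda$ still land in $I_g$ while simultaneously controlling all four (resp. five) estimates. The point is that the approximate unit of the ideal $I$ need not be compatible with the domains $A_g$ of the partial action --- this is precisely the kind of difficulty emphasized in the introduction --- so one must be careful that $e_\lambda f_g^{(j)} e_\lambda\in I\cap A_g$, which holds because $f_g^{(j)}\in A_g$, $A_g$ is an ideal of $A$ (being itself a closed two-sided ideal in a $C^*$-algebra setting for partial actions, or at least a hereditary subalgebra), and $e_\lambda\in I$, so the product lies in $I\cap A_g = I_g$. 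Quasicentrality of $(e_\lambda)$ then absorbs the commutator-type errors, and a standard $\ep/3$-type bookkeeping argument with $\ep_0 = \ep/(C(|G|,d))$ closes the estimates. I would write out the $\alpha|_I$ argument in moderate detail and dispatch the $\overline{\alpha}$ case in a sentence or two, since it is immediate.
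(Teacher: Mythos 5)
Your proposal is correct and follows essentially the same route as the paper: the quotient case by pushing towers through the equivariant quotient map, and the ideal case by cutting the towers down by a quasicentral approximate unit of $I$ (so they land in $I\cap A_g=I_g$, the domains being ideals), using that all witnesses, including the translated ones $\alpha_g(x)$ for $x\in F\cap A_{g^{-1}}$ (which lie in $I$ by $G$-invariance), are approximately fixed by the approximate unit. The only cosmetic difference is that augmenting $F$ by approximate-unit elements beforehand is unnecessary (and by itself would only give commutation cut down by a further witness); as in the paper, one simply chooses the approximate-unit element \emph{after} the towers, large enough to nearly commute with them and nearly fix the finitely many translated witnesses.
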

\begin{proof}
We prove the results for $\dimRok$, since the case of $\cdimRok$ is similar.
We assume from now on that $d=\dimRok(\alpha)<\I$, otherwise there is nothing to prove.

We prove $\dimRok(\alpha|_{I})\leq \dimRok(\alpha)$ first. Let $\ep>0$ and let a finite subset $F\subseteq I$ be given.
Without loss of generality, we assume that $F$ contains
only contractions. 
Set $\ep_0=\frac{\ep}{5|G|(d+1)+2}$.
Apply \autoref{df:Rdim} to find Rokhlin towers 
$f_g^{(j)}\in A_g$ with $g\in G$ and $j=0,\ldots,d$, 
for $(F, \ep_0)$. By making a small renormalization,
we may assume that $\sum_{g\in G}\sum_{j=0}^df_g^{(j)}$ is 
a contraction.
By considering an approximate identity of 
$I$ which is quasi-central in $A$, find $e\in I$ satisfying 
\[\left\| e^{\frac{1}{2}}f_g^{(j)}-f_g^{(j)}e^{\frac{1}{2}}\right\|<\ep_0, \ \ 
 \left\|be-b\right\|<\ep_0, \ \text{ and } \ \left\|eb-b\right\|<\ep_0  
\]
for all $g\in G$, all $j=0,\ldots,d$, and all 
$b\in \bigcup\limits_{g\in G}\alpha_g(F\cap A_{g^{-1}}).$

For $g\in G$ and $j=0,\ldots,d$, set 
$\widetilde{f}_g^{(j)}=e^{\frac{1}{2}}f_g^{(j)}e^{\frac{1}{2}}$. 
We claim that $\{\widetilde{f}_g^{(j)}\colon g\in G, j=0,\ldots,d\}$ are Rokhlin towers with respect to $(F, \ep)$
for $\alpha|_I$. Note that $\widetilde{f}_g^{(j)}$
belongs to $A_g\cap I=I_g$.
In order to check (1), let $g\in G$, let 
$x\in F\cap I_{g^{-1}}=F\cap A_{g^{-1}}$, let $a\in F$, and let 
$j=0,\ldots,d$. Then
\begin{align*}
 \alpha_g(\widetilde{f}_h^{(j)}x)a&= \alpha_g(e^{\frac{1}{2}}f_h^{(j)}e^{\frac{1}{2}}x)a
\approx_{\ep_0} \alpha_g(f_h^{(j)}ex)a\\
 &\approx_{\ep_0} \alpha_g(f_h^{(j)}x)a
\approx_{\ep_0} f_{gh}^{(j)}\alpha_g(x)a\\
 &\approx_{\ep_0} f_{gh}^{(j)}e\alpha_g(x)a
 \approx_{\ep_0} e^{\frac{1}{2}}f_{gh}^{(j)}e^{\frac{1}{2}}\alpha_g(x)a
 = \widetilde{f}_{gh}^{(j)}\alpha_g(x)a.
\end{align*}
Thus $\|\alpha_g(\widetilde{f}_h^{(j)}x)a-
\widetilde{f}_{gh}^{(j)}\alpha_g(x)a\|< 5\ep_0<\ep$,
as desired.
Condition (2) is easily checked, and is left to the reader.
For (3), let $a\in F$ be given. Then
\begin{align*}
\sum\limits_{j=0}^{d}\sum\limits_{g\in G} \widetilde{f}_g^{(j)}a 
&=\sum\limits_{j=0}^{d} \sum\limits_{g\in G} e^{\frac{1}{2}}f_g^{(j)}e^{\frac{1}{2}}a\\
&\approx_{|G|(d+1)\ep_0} \sum\limits_{j=0}^{d} \sum\limits_{g\in G} f_g^{(j)}ea\\
&\approx_{\ep_0} \sum\limits_{j=0}^{d} \sum\limits_{g\in G} f_g^{(j)}a\approx_{\ep_0}a,
\end{align*}
where in the second to last step we used the fact that 
$\sum_{g\in G}\sum_{j=0}^df_g^{(j)}$ is a contraction.
Finally, to check (4), let $g\in G, j=0,\ldots,d$ and $a\in F$. 
Then
\begin{align*}
\widetilde{f}_g^{(j)}a=e^{\frac{1}{2}}f_g^{(j)}e^{\frac{1}{2}}a\approx_{2\ep_0} f_g^{(j)}a \approx_{\ep_0} xf_g^{(j)}
\approx_{2\ep_0}xe^{\frac{1}{2}}f_g^{(j)}e^{\frac{1}{2}}= a\widetilde{f}_g^{(j)}.
\end{align*}

We turn to the inequality $\dimRok(\overline{\alpha})\leq d=\dimRok(\alpha)$. Write $\pi\colon A\to A/I$ for the 
canonical equivariant map. 
Let $\overline{F}\subseteq A/I$ be a finite set and $\ep>0$. Let
$F\subseteq A$ be any finite set satisfying
$\pi(F)=\overline{F}$, and apply \autoref{df:Rdim} 
for $\alpha$ to find
Rokhlin towers $f_g^{(j)}\in A_g$, with $g\in G$ and $j=0,\ldots,d$, for $(F,\ep)$. 
It is then immediate to check that
the positive contractions $\overline{f}_g^{(j)}=\pi(f_g^{(j)})$ witness the fact
that $\dimRok(\overline{\alpha})\leq d$, as desired.
\end{proof}


\section{Rokhlin dimension and globalization} \label{sec: RokDimGlobRes}
The basic example of a partial action is obtained by 
starting with a global action $\beta\colon G\to \Aut(B)$
and a (not necessarily invariant) ideal $A$ in $B$, and then 
setting $A_g=A\cap \beta_g(A)$ 
and $\alpha_g=\beta_g|_{A_{g^{-1}}}$ for all $g\in G$.
Actions of this form are called \emph{globalizable}, since
they are induced by a global action. 
Here is the precise definition:

\begin{df}\label{df:enveloping}
Let $G$ be a finite group, let $A$ be a \ca, and let 
$\alpha$ be a partial action of $G$ on 
$A$. A triple $(B,\beta,\iota)$ consisting of a \ca\ $B$, a global action
$\beta\colon G\to\Aut(B)$ and an embedding 
$\iota\colon A\to B$, is said to 
be an \emph{enveloping action} for $\alpha$ if the following conditions are
satisfied:
\be\item[(a)] $A$ (identified with $\iota(A)$) is an ideal in $B$;
\item[(b)] $A_g=A\cap \beta_g(A)$ for all $g\in G$;
\item[(c)] $\alpha_g(a)=\beta_g(a)$ for all $a\in A_{g^{-1}}$ and all $g\in G$;
\item[(d)] $B=\overline{\mathrm{span}}\{\beta_g(a)\colon a\in A, g\in G \}$.\ee
(If such a dynamical system $(B,\beta)$ exists, then it is
unique up to
an equivariant isomorphism extending the identity on $A$ by
Theorem~3.8 in~\cite{Aba_enveloping_2003}.)
We say that $\alpha$ is \emph{globalizable} if it has an 
enveloping action.
\end{df}

Not every partial action is globalizable, and even when it is, identifying its
enveloping action may turn out to be challenging. 
Since there is a vast amount of literature concerning global
actions with finite Rokhlin dimension, it would be very useful
if one could relate the Rokhlin dimension of a (globalizable) 
partial action to its globalization.
Unfortunately, these values do not necessarily agree:

\begin{eg}\label{eg:RdimGlobDifferent}
Set $X=S^1\setminus\{1\}$ and $U=X\setminus\{-1\}$.
Let $\sigma\in\mathrm{Homeo}(U)$ be given by $\sigma(x)=-x$
for all $x\in U$. 
Denote by $\gamma$ the partial action of $\Z_2=\{-1,1\}$ on $C_0(X)$ 
determined by $\sigma$. Then $\gamma$ is globalizable, with
globalization $\widetilde{\gamma}\colon \Z_2\to \Aut(C(S^1))$
induced by multiplication by -1. 

Let $\delta\colon \Z_2\to\Aut(\C\oplus \C)$ be the flip
action. Set $\alpha=\gamma\otimes\delta$, which is globalizable
with globalization given by 
$(B,\beta)=(C(S^1)\oplus C(S^1),\widetilde{\gamma}\otimes\delta)$. 
It is clear that $\dimRok(\beta)=0$, since we may take 
$p_{-1}=(1,0)$ and $p_1=(0,1)$ in $B$.

We claim that $\dimRok(\alpha)\neq 0$. 
From now on, we 
identify $X$ with $(0,2)$ and $U$ with $(0,1)\cup (1,2)$.
Arguing by contradiction, set $\ep=3/16$ and 
take $I_{1}=[\ep,2-\ep]$ and 
$I_{-1}=[\ep,1-\ep]\cup [1+\ep,2-\ep]$, let 
$a\in C_0(X)$ be constant equal to 1 on $I_1$, 
and linear otherwise, and let $b=\sigma(b)\in C_0(U)=C_0(X)_{-1}$ be constant 
equal to 1 on $I_{-1}$ and linear otherwise.
Let $f_{-1}=(\xi_{-1},\eta_{-1})\in C_0(U)\oplus C_0(U)$ and $f_1=(\xi_1,\eta_1)\in C_0(X)\oplus C_0(X)$
be a Rokhlin tower for $\alpha$ with respect to $F=\{(a,a),(b,b)\}$ and $\ep$. Then we have
\bi\item[(a)] $\big|\xi_{-1}(\sigma(x))b(x)-\eta_1(x)b(x)\big|<\ep$ and 
$\big|\eta_{-1}(\sigma(x))b(x)-\xi_1(x)b(x)\big|<\ep$;
\item[(b)] $\xi_{-1}(x)\xi_1(x)a(x)<\ep$ and 
$\eta_{-1}(x)\eta_1(x)a(x)<\ep$;
\item[(c)] $(1-\xi_{-1}(x)-\xi_1(x))a(x)<\ep$ and $(1-\eta_{-1}(x)-\eta_1(x))a(x)<\ep$\ei
for all $x\in X$. 
Upon making a small renormalization, we may assume that
\[\label{eqn:3.1}\tag{3.1}\xi_{-1}(x)+\xi_1(x)=1=\eta_{-1}(x)+\eta_1(x)\]
for all $x\in I_1$. In particular $\xi_1(1)=1$.
Fix $x\in I_1$. Substituting (\ref{eqn:3.1}) into (b), we get 
\[(1-\xi_1(x))\xi_1(x)<\ep,\]
which yields either $\xi_1(x)> 3/4$ or $\xi_1(x)<1/4$. 
Since $\xi_1$ is continuous, we must have
either $\xi_1(I_{1})\subseteq  (3/4,1]$ or 
$\xi_1(I_{1})\subseteq [0,1/4)$, and since $\xi_1(1)=1$,
it must be $\xi_1(I_{1})\subseteq  (3/4,1]$ and thus 
\[\label{eqn:3.2}\tag{3.2}  
\xi_{-1}(I_{1})\subseteq [0,1/4).\]
An identical argument shows that
\[\label{eqn:3.3}\tag{3.3}\eta_1(I_{1})\subseteq  (3/4,1].\]
Taking now $x\in I_{-1}\subseteq I_1$, and noting that $b(x)=1$
and $\sigma(x)\in I_{-1}$ as well,
we get
\[\frac{3}{16}=\ep\stackrel{\mathrm{(a)}}{>}\big|\xi_{-1}(\sigma(x))b(x)-\eta_1(x)b(x)\big|\stackrel{\ref{eqn:3.2},\ref{eqn:3.3}}{>} \frac{3}{4}-\frac{1}{4},\]
which is a contradiction. We conclude that $\dimRok(\alpha)>0$.

We point out that one can construct towers that witness 
the fact that 
$\dimRok(\alpha)\leq 1$, thus allowing us to conclude that
$\dimRok(\alpha)=1$. However, we do not need this, so we omit it.
\end{eg}

In contrast with the previous example, we will show in
\autoref{thm:RdimGlobaliz} that for globalizable
partial actions which act on \emph{unital} \ca s, 
their Rokhlin dimension (with or without 
commuting towers) agrees with that of its globalization. 
The result is by no means obvious and 
its proof is quite technical. The following lemma,
which deals exclusively with global actions,
represents the first step in proving 
the inequality $\dimRok(\alpha)\leq \dimRok(\beta)$.

\begin{lma} \label{lemma: Restriction intermediate lemma}
Let $\beta\colon G\to \Aut(B)$ be an action of a finite
group $G$ on a \uca\ $B$. Let $A$ be a unital ideal in 
$B$
satisfying $B=\overline{\mathrm{span}}\{\beta_g(a)\colon a\in A, g\in G \}$. If $d=\dimRok(\beta)$ is finite, 
then there exist Rokhlin towers $f_g^{(j)}\in B$
for $\beta$, satisfying condition (1') 
in \autoref{prop:RdimReductionUnital}, and such that
$f_1^{(0)},\ldots,f_1^{(d)}$ belong to $A$. 
A similar statement holds when $\cdimRok(\beta)<\I$.
\end{lma}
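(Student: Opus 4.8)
Since $A$ is a unital ideal in the unital algebra $B$, its unit $1_A$ is a central projection in $B$; write $p = 1_A$ and $q = 1_B - p$, so $B \cong A \oplus qBq$ as a direct sum of ($G$-non-invariant) ideals. The spanning condition $B = \overline{\mathrm{span}}\{\beta_g(a) : a\in A, g\in G\}$ means $\bigvee_{g\in G}\beta_g(p) = 1_B$, i.e. the central projections $\beta_g(p)$ have join $1_B$. The plan is to start with arbitrary Rokhlin towers $(e_g^{(j)})$ for $\beta$ witnessing $\dim_{\mathrm{Rok}}(\beta)\le d$ with respect to a suitable pair $(F,\varepsilon)$ — chosen to contain $p$ and the relevant units/elements — then to \emph{cut down} the ``unit component'' $e_1^{(j)}$ to live in $A$, and \emph{transport} the cut via the approximate equivariance to redefine all the other tower elements so that the exact relation (1$'$) holds on the nose.

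**Key steps.** First I would invoke \autoref{prop:RdimReductionUnital} in its global form (the global action $\beta$ on the unital algebra $B$ is in particular a unital partial action with all units equal to $1_B$, so condition (1) can be upgraded to exact equivariance $\beta_g(e_h^{(j)}) = e_{gh}^{(j)}$ after passing to a new tower). So without loss of generality I start from towers satisfying (1$'$), i.e. genuinely $G$-equivariant families, plus the approximate versions of (2)--(4) for $(F,\varepsilon)$. Second, I replace $e_1^{(j)}$ by $f_1^{(j)} := p\, e_1^{(j)}\, p = e_1^{(j)} p$ (using centrality of $p$), which is a positive contraction in $A$, and then \emph{define} $f_g^{(j)} := \beta_g(f_1^{(j)})$ for all $g\in G$. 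By construction this family is exactly $G$-equivariant, so (1$'$) holds with no error, and $f_1^{(j)}\in A$. The remaining work is to check that, for $\varepsilon$ small enough and $F$ large enough, the new family still satisfies (2), (3), (4) — now with respect to the originally given $(F',\varepsilon')$ for which we want the conclusion. The point is that $f_g^{(j)} = \beta_g(p)\,e_g^{(j)}$ (since $\beta_g(p e_1^{(j)}) = \beta_g(p)\beta_g(e_1^{(j)}) = \beta_g(p)e_g^{(j)}$ by equivariance of the old tower), and one estimates $\|\,e_g^{(j)} - f_g^{(j)}\,\| = \|(1-\beta_g(p))e_g^{(j)}\|$. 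To control this one uses the covering relation $\bigvee_g \beta_g(p) = 1_B$ together with (3): on a sufficiently large $F'$, the sum $\sum_{j,g} e_g^{(j)}$ is close to $1_B$, hence $\sum_{g} e_g^{(j)}$ absorbs $1 - \beta_h(p)$ only through the other group indices, and the near-orthogonality (2) forces $(1-\beta_g(p))e_g^{(j)}$ to be small when tested against $F'$. Once $\|e_g^{(j)} - f_g^{(j)}\|$ is small (in the relevant $F'$-multiplier seminorm), conditions (2), (3), (4) for the $f$'s follow from those for the $e$'s by routine perturbation, exactly as in the proof of \autoref{prop:PermProp}.

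**Main obstacle.** The delicate point is the estimate $\|(1-\beta_g(p))e_g^{(j)}\,a\| < \varepsilon'$: a priori there is no reason $e_g^{(j)}$ should be supported under $\beta_g(p)$. The resolution is to feed into \autoref{df:Rdim} (for the \emph{global} action $\beta$) a finite set $F$ that already contains $p$ and the elements $1-\beta_g(p)$ for all $g$ (equivalently, $p$ together with a generating set), and to exploit the following mechanism: for fixed $g$, conditions (2) and (3) give $\big(1 - \sum_{k}\sum_{h} e_h^{(k)}\big)a \approx 0$ and $e_g^{(j)} e_h^{(j)} a \approx 0$ for $h\neq g$, while equivariance gives $e_h^{(k)} = \beta_h(e_1^{(k)})$, which is supported under $\beta_h(p)$ only \emph{after} the cut — so one must run the argument the other way: show directly that $e_1^{(j)} \approx p\,e_1^{(j)}$ using that $e_1^{(j)}$ commutes approximately with $p\in F$ and that $(1-p)e_1^{(j)}$ is killed because the towers at the other group elements $\beta_g(e_1^{(j)})$ pick up the mass of $1-p = \bigvee_{g\neq 1}$ (on the ideal $qBq$), again via (2)+(3). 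I would carry this out by a short lemma: if $p$ is a central projection with $\bigvee_g\beta_g(p)=1_B$ and $(e_g^{(j)})$ are $G$-equivariant approximate Rokhlin towers, then $e_1^{(j)}(1-p)$ is small on $F$. This is the technical heart; everything else is bookkeeping. The $\cdim_{\mathrm{Rok}}$ version is identical since cutting by the central projection $p$ and applying $\beta_g$ preserves approximate commutation of the towers (condition (5)).
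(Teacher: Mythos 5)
There is a genuine gap, and it sits exactly at what you call the ``technical heart''. Your plan is to cut only the identity-indexed tower, $f_1^{(j)}:=1_A\,e_1^{(j)}$, transport it by $\beta$, and then argue that nothing is lost because $e_1^{(j)}(1-1_A)$ is small when tested against $F$. That claimed lemma is false. Take $G=\Z_2$ acting on $B=\C\oplus\C$ by the flip and $A=\C\oplus 0$, so $p=1_A=(1,0)$ and the covering relation $\bigvee_g\beta_g(p)=1_B$ holds. The exactly equivariant Rokhlin towers $e_1=(0,1)$, $e_{-1}=(1,0)$ satisfy conditions (1')--(4) of \autoref{df:Rdim} \emph{exactly}, for any $F$ (including $F\ni 1_B,\,p,\,1-\beta_g(p)$), yet $e_1(1-p)=(0,1)$ has norm $1$. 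Indeed, if $(e_g^{(j)})_g$ is an exactly equivariant family then so is any translate $(e_{gt}^{(j)})_g$, so conditions (2) and (3) carry no information linking the label $g$ of a tower to the position of the ideal $\beta_g(A)$ inside $B$; no choice of $F$ and $\ep$ can force $e_1^{(j)}$ to sit approximately under $1_A$ --- that is essentially the conclusion of the lemma, so arguing it from the Rokhlin conditions alone is circular. Concretely, in the example your construction gives $f_1^{(j)}=p\,e_1^{(j)}=0$ and hence $f_g^{(j)}=\beta_g(p)e_g^{(j)}=0$ for all $g$, so condition (3) fails completely: the mass of $e_g^{(j)}$ lying outside $\beta_g(A)$ is discarded by the cut and never recovered.

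The paper's proof avoids this by a redistribution rather than a single cut. First it produces (Claim~1, by inclusion--exclusion on the central projections $\beta_g(1_A)$) a partition of unity $1_B=\sum_{g\in G}\beta_g(p_g)$ with $p_g$ central projections lying in $A$. Then, starting from exactly equivariant towers $f_g^{(j)}$ with $\sum_{g,j}f_g^{(j)}\le 1$, it sets $a_1^{(j)}=\sum_{g\in G}f_{g^{-1}}^{(j)}p_g\in A$ and $a_g^{(j)}=\beta_g(a_1^{(j)})$; every group-translate of the old tower contributes to the new identity-indexed element through its own piece $p_g$ of the partition. The payoff is the exact identity $\sum_{g,j}a_g^{(j)}=\sum_{g,j}f_g^{(j)}$, so (3) is inherited on the nose, while (2), (4) and, in the commuting-towers case, (5) follow from near-orthogonality of the $f$'s together with orthogonality of the $\beta_g(p_g)$ and centrality of the $p_g$. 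Your proposal is missing this partition-of-unity/redistribution idea, and without it the step ``$\|(1-\beta_g(p))e_g^{(j)}a\|$ is small'' cannot be repaired.
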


\begin{proof} 
We divide the proof into claims:

\textbf{Claim 1:} \emph{there exist projections
$p_g\in A$, for $g\in G$, which are central in $B$
and satisfy $1_B=\sum\limits_{g\in G}\beta_g(p_g)$.}

Set $n=|G|$ and fix an enumeration 
$G=\{1_G=g_1,g_2,\ldots,g_n\}$. Since 
$B=\sum_{g\in G}\beta_g(A)$,
we have $1_B\leq \sum\limits_{g\in G}\beta_g(1_A)$. 
Note that the projections $\beta_g(1_A)$ are central, and
therefore together with $1_B$ generate a commutative \ca. 
In the rest of this claim, we regard $1_B$ and $\beta_g(1_A)$,
for $g\in G$, as $\{0,1\}$-valued functions on some
compact Hausdorff space, and identify them with their supports.
In particular, the support of $1_B$ is equal to the union of the 
supports of $\beta_g(1_A)$, for $g\in G$.
By successively removing the double intersections, adding
the triple intersections, and similarly for higher 
degrees, we can write $1_B$ as follows:
\begin{align*}\label{eqn:Decomp1B}\tag{3.4}
1_B=&\sum\limits_{i=1}^{n}\beta_{g_i}(1_A)-\sum\limits_{1\leq i<j\leq n}\beta_{g_i}(1_A)\beta_{g_j}(1_A)
\\
& + \sum_{1\leq i<j<k\leq n}\beta_{g_i}(1_A)\beta_{g_j}(1_A)\beta_{g_k}(1_A) +\cdots \\
& +(-1)^{n-1}\beta_{g_1}(1_A)\beta_{g_2}(1_A)\cdots\beta_{g_n}(1_A).\end{align*}

Observe that every element appearing in (\ref{eqn:Decomp1B}) is central in $B$.
We now proceed to write the unit of $B$ as a sum of orthogonal projections, in the following manner.
The first summand consists of 
all the elements appearing in (\ref{eqn:Decomp1B}) 
that have $\beta_{g_1}(1_A)$ as a factor.
The second summand consists of all the elements 
appearing in (\ref{eqn:Decomp1B}) 
that have $\beta_{g_2}(1_A)$ as a factor but not $\beta_{g_1}(1_A)$. Continue
inductively, and note that the process finishes
after $n$ steps; indeed, 
the only element left at the $n$-th step is 
$\beta_{g_n}(1_A)$.

For $1\leq k\leq n$, the $k$-th summand is
\begin{align*}
P_k= \beta_{g_k}(1_A)&-\sum\limits_{k<j\leq n}\beta_{g_k}(1_A)\beta_{g_j}(1_A)+\sum\limits_{k<i<j\leq n}\beta_{g_k}(1_A)\beta_{g_i}(1_A)\beta_{g_j}(1_A)\\
&+\cdots +(-1)^{n-k}\beta_{g_k}(1_A)\beta_{g_{k+1}}(1_A)\cdots \beta_{g_n}(1_A),
\end{align*}
and we have $1_B=P_1+\cdots+P_n$. 
We want to see that $P_k$ is a projection. Set 
\[Q_{k}=\sum\limits_{k<j\leq n}\beta_{g_j}(1_A)-\sum\limits_{k<i<j\leq n}\beta_{g_i}(1_A)\beta_{g_j}(1_A)+\cdots + (-1)^{n-k+1}\prod_{j>k} \beta_{g_j}(1_A).\]
Then $Q_{k}$ is the unit of the ideal 
$\sum\limits_{j>k}\beta_{g_j}(A)$, and therefore 
$Q_{k}$ is a projection. Moreover, an easy computation
shows that
$P_k=\beta_{g_k}(1_A)(1_B-Q_{k})$, and thus $P_k$ is 
also a projection. Note that $P_kP_\ell=0$ if $k\neq \ell$.

For $k=1,\ldots,n$, set $p_k=\beta_{g_k}^{-1}(P_k)$,
which can be written as
\[p_k=1_A-\sum\limits_{k<j\leq n}1_A\beta_{{g_k}^{-1}g_j}(1_A)+\cdots +(-1)^{(n-k)}1_A\beta_{{g_k}^{-1}g_{k+1}}(1_A)\cdots \beta_{{g_k}^{-1}{g_n}}(1_A).\]
Then $p_k$ is a central projection, and it belongs to $A$
because $1_A$ is a factor in each of its summands.
Since $1_B=\sum_{k=1}^n \beta_{g_k}(p_k)$, 
this concludes the proof of Claim~1. 

\vspace{0.3cm}

Let $\ep>0$ and let $F\subseteq B$ be finite.
Without loss of generality, $F$ is $\beta$-invariant,
contains $1_B$, 
and consists of contractions. 
Set $\ep_0=\frac{\ep}{|G|^2(d+2)}$, and 
let $f_g^{(j)}\in B$, for $g\in G$ and $j=0,\ldots,d$,
be Rokhlin towers for $(F,\ep_0)$. 
Using \autoref{prop:RdimReductionUnital} for the equality,
and by replacing $f_g^{(j)}$ with $\frac{1}{1+\ep}f_g^{(j)}$ for 
the inequality,
we may assume
\[\label{eqn:Simplifications}\tag{3.5}
\beta_g(f_h^{(j)})=f_{gh}^{(j)} \ \ \mbox{ and } \ \ \sum\limits_{g\in G}\sum_{j=0}^df_g^{(j)}\leq 1\]
for all $g,h\in G$ and all $j=0,\ldots,d$.

\textbf{Claim 2:} \emph{there are positive elements $x_g^{(j)}\in A$, for
$g\in G$ and $j=0,\ldots,d$, with
\be\item[(2.a)] $f_1^{(j)}=\sum\limits_{g\in G}\beta_g(x_g^{(j)})$;
\item[(2.b)] $\sum\limits_{g\in G} x_g^{(j)}\in A$ is a positive contraction;
\item[(2.c)] $\beta_h(x_g^{(j)})b\approx_{\ep_0}b\beta_h(x_g^{(j)})$ for all $b\in F$, all $g,h\in G$ and all $j=0,\ldots,d$; and 
\item[(2.d)] $\left\|x_h^{(j)}\beta_g(x_t^{(j)})\right\|<\ep_0$ for all $j=0,\ldots,d$ and all $g,h,t\in G$ with $g\neq 1$.\ee}

Using Claim~1, fix projections $p_g\in A$, for $g\in G$, which are central in $B$ and satisfy $1_B=\sum_{g\in G}\beta_g(p_g)$. 
For $j=0,\ldots,d$, multiply both sides of the identity by $f_1^{(j)}$ to get:
\[f_1^{(j)}=\sum\limits_{g\in G}\beta_{g}(\beta_{{g}^{-1}}(f_1^{(j)})p_g)=\sum\limits_{g\in G}\beta_{g}(f_{{g}^{-1}}^{(j)}p_g).\]
Set $x_{g}^{(j)}=f_{{g}^{-1}}^{(j)}p_g$ for all 
$g\in G$ and $j=0,\ldots,d$. Since $p_g$ is central in $B$
and belongs to $A$, it is clear that $x_g^{(j)}$ is a positive element in $A$. Condition (2.a) is
satisfied by construction.
Using centrality of $p_g$ at the second step, 
we get
\[\sum\limits_{g\in G}x_{g}^{(j)}=\sum\limits_{g\in G}f_{{g}^{-1}}^{(j)}p_g\leq\sum\limits_{g\in G}f_{g}^{(j)}\leq\sum\limits_{j=0}^{d}\sum\limits_{g\in G}f_{g}^{(j)}
\stackrel{(\ref{eqn:Simplifications})}{\leq}
1,\]
and thus $\sum\limits_{g\in G}x_{g}^{(j)}$ is a positive contraction, verifying (2.b). 
In order to check (2.c), let $g,h\in G$ and $j=0,\ldots,d$.
Since $\beta_h(x_g^{(j)})=f_{hg^{-1}}^{(j)}\beta_h(p_g)$
and $p_g$ is central, it follows that 
\[\left\|\beta_h(x_g^{(j)})b-b\beta_h(x_g^{(j)})\right\|\leq
 \left\|f_{hg^{-1}}^{(j)}b-bf_{hg^{-1}}^{(j)}\right\|<\ep_0
\]
for all $b\in F$. We turn to (2.d). 
Given $j=0,\ldots,d$ and 
$g,h,t\in G$ with $g\neq 1$, we have
\[
x_h^{(j)}\beta_g(x_t^{(j)})=
f_{{h}^{-1}}^{(j)}p_{h}f_{gt^{-1}}^{(j)}\beta_g(p_{t})=
f_{{h}^{-1}}^{(j)}f_{gt^{-1}}^{(j)}\beta_g(p_{t})p_{h}.\]

If $gt^{-1}\neq h^{-1}$ then 
$\|f_{{h}^{-1}}^{(j)}f_{gt^{-1}}^{(j)}\|<\ep_0$
and hence $\|x_h^{(j)}\beta_g(x_t^{(j)})\|<\ep_0$.
Otherwise, we have
$gt^{-1}= h^{-1}$ and thus $g=h^{-1}t$. In particular, 
$h\neq t$. Then 
\[\left\|\beta_g(p_{t})p_{h}\right\|=\left\|\beta_{h^{-1}t}(p_k)p_h\right\|=\left\|\beta_t(p_t)\beta_h(p_h)\right\|=0,\] 
since $\beta_t(p_t)$ is orthogonal to $\beta_h(p_h)$ by Claim~1.
It follows that $x_h^{(j)}\beta_g(x_t^{(j)})=0$, and 
thus (2.d) is satisfied. This proves Claim~2.

\vspace{0.3cm}

Let $x_g^{(j)}\in A$, for $g\in G$ and $j=0,\ldots,d$,
be positive elements satisfying the conclusion of Claim~2.
For $j=0,\ldots,d$, set $a_1^{(j)}=\sum\limits_{g\in G}x_g^{(j)}\in A$. For $g\in G$, we set $a_g^{(j)}=\beta_g(a_1^{(j)})$. 
Then $\beta_g(a_h^{(j)})=a_{gh}^{(j)}$ for all 
$g,h\in G$ and $j=0,\ldots,d$. In particular, condition
(1) in \autoref{df:Rdim} is satisfied. To check condition
(2), let $j=0,\ldots,d$ and $g,h\in G$ with $g\neq h$. 
Then
\[\|a_g^{(j)}a_h^{(j)}\|=\left\|a_1^{(j)}\beta_{g^{-1}h}(a_1^{(j)})\right\|=\left\|\sum\limits_{s,t\in G}x_s^{(j)}\beta_{g^{-1}h}(x_t^{(j)})\right\|\stackrel{\mathrm{(2.d)}}{\leq} |G|^2\ep_0<\ep.\]
Moreover, condition (3) follows from the following identity:
\begin{align*}
\sum\limits_{g\in G}\sum\limits_{j=0}^{d}a_g^{(j)}&=\sum\limits_{g\in G}\sum\limits_{j=0}^{d}\beta_g\left(\sum\limits_{h\in G} x_h^{(j)}\right)=\sum\limits_{g,h\in G}\sum\limits_{j=0}^{d}\beta_{g}(x_h^{(j)}) \\
&=\sum\limits_{g,h\in G}\sum\limits_{j=0}^{d}\beta_{gh}(x_h^{(j)})
=\sum\limits_{g\in G}\sum\limits_{j=0}^{d}\beta_g\left(\sum\limits_{h\in G} \beta_h(x_h^{(j)})\right)\\
&=
\sum\limits_{g\in G}\sum\limits_{j=0}^{d}\beta_g(f_1^{(j)})
=\sum\limits_{g\in G}\sum\limits_{j=0}^{d}f_g^{(j)}.
\end{align*}
Let $b\in F$, $g\in G$ and $j=0,\ldots,d$. In order to
check condition (4) in \autoref{df:Rdim}, and 
since $a_g^{(j)}=\beta_g(a_1^{(j)})$ and 
$F$ is $\beta$-invariant, it suffices to take $g=1$. 
In this case, we have
\[\|a_1^{(j)}b-ba_1^{(j)}\|\leq \sum_{g\in G}\|x_g^{(j)}b-bx_g^{(j)}\|\stackrel{\mathrm{(2.c)}}{\leq} |G|\ep_0<\ep.\]
This proves the first part of the lemma. 

Assume now that 
$\cdimRok(\beta)=d<\infty$, and choose the Rokhlin towers as above to moreover satisfy condition (5) in 
\autoref{df:Rdim}.
For $g,h\in G$ and $j,k=0,\ldots,d$, we get 
\begin{align*}a_g^{(j)}a_h^{(k)} &= \beta_g(a_1^{(j)})\beta_h(a_1^{(k)})=\sum_{t,s\in G} \beta_g(x_t^{(j)})\beta_h(x_s^{(k)})\\
&=\sum_{t,s\in G} \beta_g(f_{t^{-1}}^{(j)}p_t)\beta_h(f_{s^{-1}}^{(k)}p_s)
=\sum_{t,s\in G} f_{gt^{-1}}^{(j)}\beta_g(p_t)f_{hs^{-1}}^{(k)}\beta_h(p_s)\\
&\approx_{|G|^2\ep_0}\sum_{t,s\in G} f_{hs^{-1}}^{(k)}\beta_h(p_s)f_{gt^{-1}}^{(j)}\beta_g(p_t)
=a_h^{(k)}a_g^{(j)}.\qedhere\end{align*}
\end{proof}

We are now ready to prove the main result of this
section: the Rokhlin dimension of a globalizable partial action on a unital 
\ca\ equals the Rokhlin dimension of its globalization. In
particular, we obtain a large family of examples of partial
actions with finite Rokhlin dimension. 

\begin{thm}\label{thm:RdimGlobaliz}
Let $\alpha$ be a globalizable partial action of a finite group $G$ on a unital $C^*$-algebra $A$, and let $\beta\colon G\to\Aut(B)$
denote its globalization.
Then
\[\dimRok(\alpha)=\dimRok(\beta) \ \ \mbox{ and } \ \ \cdimRok(\alpha)=\cdimRok(\beta).\]
\end{thm}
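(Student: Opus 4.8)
The plan is to establish the two inequalities $\dimRok(\alpha)\le\dimRok(\beta)$ and $\dimRok(\beta)\le\dimRok(\alpha)$ separately (the arguments for $\cdimRok$ being identical), both built on a dictionary between Rokhlin towers for $\beta$ and Rokhlin towers for $\alpha$. First I would record the basic facts: since $A$ is a \emph{unital} ideal in $B$, the unit $1_A$ is a central projection in $B$ with $1_AB=A$; each $\beta_g(1_A)$ is also central in $B$, and $1_g:=1_A\beta_g(1_A)$ is the unit of $A_g=A\cap\beta_g(A)$ by \autoref{rem:UnitsIdeals}; and $B=\sum_{g\in G}\beta_g(A)$ is unital, being a finite sum of unital ideals. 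The dictionary is then: to a tuple $h^{(0)},\dots,h^{(d)}$ of positive contractions in $A$ one assigns the $\beta$-equivariant family $F_g^{(j)}:=\beta_g(h^{(j)})$ of positive contractions in $B$ and the family $f_g^{(j)}:=F_g^{(j)}1_A$ of positive contractions in $A_g$; conversely, every $\beta$-equivariant family of positive contractions in $B$ with value at $1$ in $A$, and every family of positive contractions in the $A_g$ satisfying condition~(1') of \autoref{prop:RdimReductionUnital}, arises this way, with $h^{(j)}=f_1^{(j)}$. The key fact, immediate from \autoref{rem:UnitsIdeals}, is that $(f_g^{(j)})$ then automatically satisfies~(1') for $\alpha$ and that $F_g^{(j)}1_A=f_g^{(j)}$.

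For $\dimRok(\alpha)\le d:=\dimRok(\beta)$: given a finite $F'\subseteq A$ of contractions with $1_A\in F'$, and $\ep>0$, I would apply \autoref{lemma: Restriction intermediate lemma} to $(F',\ep)$ to obtain Rokhlin towers $f_g^{(j)}\in B$ for $\beta$ satisfying~(1') with $f_1^{(j)}\in A$; by the construction in that lemma, $f_g^{(j)}=\beta_g(f_1^{(j)})$. Put $\widehat f_g^{(j)}:=f_g^{(j)}1_A\in A_g$. Condition~(1') for $\alpha$ is automatic from the dictionary, and multiplying conditions (2), (3), (4) (and (5) in the commuting-tower case) for the $\beta$-towers on the right by the central projection $1_A$ — using $1_A\in F'$ and $1_AB=A$ — shows that $(\widehat f_g^{(j)})$ is a Rokhlin tower for $\alpha$ with respect to $(F',\ep)$. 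This gives $\dimRok(\alpha)\le d$, and likewise $\cdimRok(\alpha)\le\cdimRok(\beta)$.

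For $\dimRok(\beta)\le d:=\dimRok(\alpha)$: given a finite $\widehat F\subseteq B$ and $\ep>0$, I would first use $B=\sum_g\beta_g(A)$ to reduce, after enlarging, to $\widehat F=\bigcup_{s\in G}\beta_s(F_0)$ for a finite $F_0\subseteq A$ of contractions (this set being $\beta$-invariant). Then, for a sufficiently small $\ep_0>0$ and a finite $F_1\subseteq A$ containing $1_A$ and all the elements $a1_g$ and $\alpha_t(a1_{t^{-1}})$ for $a\in F_0$, $g,t\in G$, I use \autoref{prop:RdimReductionUnital} to produce Rokhlin towers $f_g^{(j)}\in A_g$ for $\alpha$ satisfying~(1') with respect to $(F_1,\ep_0)$, and I set $F_g^{(j)}:=\beta_g(f_1^{(j)})\in B$. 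Equivariance gives~(1') for $\beta$, and $F_g^{(j)}1_A=f_g^{(j)}$. Applying $\beta_g^{-1}$ and using centrality of $1_A$, conditions (2), (4) and (5) for $\beta$ reduce to the corresponding relations for the partial towers, which hold by the choice of $F_1$ and $\ep_0$. Condition (3) is the substantive one: the partial towers only yield $\bigl(\sum_{j,g}F_g^{(j)}\bigr)1_A=\sum_{j,g}f_g^{(j)}\approx 1_A$ \emph{inside} $A$, whereas I need $S:=\sum_{j,g}F_g^{(j)}$ to be close to $1_B$. For this I would invoke the orthogonal central projections $p_g\in A$ with $1_B=\sum_g\beta_g(p_g)$ from Claim~1 in the proof of \autoref{lemma: Restriction intermediate lemma}: since $S$ is $\beta$-invariant and $p_g=1_Ap_g$,
\[
S=S\cdot 1_B=\sum_{g\in G}\beta_g\bigl(\beta_{g^{-1}}(S)\,p_g\bigr)=\sum_{g\in G}\beta_g\bigl((S\,1_A)\,p_g\bigr)\approx\sum_{g\in G}\beta_g(p_g)=1_B,
\]
with error of order $|G|\ep_0$, so condition (3) holds for $\ep_0$ small. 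A routine perturbation argument then returns to $(\widehat F,\ep)$, yielding $\dimRok(\beta)\le d$ and $\cdimRok(\beta)\le\cdimRok(\alpha)$.

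The step I expect to be the main obstacle is exactly condition (3) in the second inequality. Every other Rokhlin relation transfers between $B$ and $A$ by multiplying with the central projection $1_A$, which succeeds precisely because $A$ is a \emph{unital} ideal of $B$; but the approximate-unit relation $\sum_{j,g}f_g^{(j)}\approx 1_A$ lives only inside $A$, and recovering $\sum_{j,g}\beta_g(f_1^{(j)})\approx 1_B$ genuinely requires the orthogonal central decomposition $1_B=\sum_g\beta_g(p_g)$ with $p_g\in A$ produced in \autoref{lemma: Restriction intermediate lemma}, together with the $\beta$-invariance of $\sum_{j,g}\beta_g(f_1^{(j)})$. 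The unitality hypothesis on $A$ cannot be dropped: \autoref{eg:RdimGlobDifferent} exhibits a commutative, non-unital globalizable partial action whose Rokhlin dimension differs from that of its globalization.
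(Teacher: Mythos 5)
Your proposal is correct and follows essentially the paper's own argument: the inequality $\dimRok(\alpha)\le\dimRok(\beta)$ via \autoref{lemma: Restriction intermediate lemma} and cutting the equivariant towers by the central projection $1_A$ (equivalently by $1_g$), and the reverse inequality by transporting towers satisfying (1') through $\widetilde{f}_g^{(j)}=\beta_g(f_1^{(j)})$, with condition (3) recovered from a decomposition of $1_B$ as a sum of $\beta$-translates of elements of $A$. The only cosmetic difference is that for this last step you invoke the orthogonal central projections $p_g$ from Claim~1 of \autoref{lemma: Restriction intermediate lemma}, whereas the paper's proof of that direction only uses arbitrary $a_g\in A$ with $1_B=\sum_{g\in G}\beta_g(a_g)$, so --- contrary to your closing remark --- the orthogonal central decomposition is genuinely needed only for the inequality $\dimRok(\alpha)\le\dimRok(\beta)$ (via \autoref{lemma: Restriction intermediate lemma}), not for condition (3) in the other direction.
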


\begin{proof}
The proof for $\dimRok$ and $\cdimRok$ are very similar, 
so we provide full details for $\dimRok$ and indicate
how one modifies the proof to obtain the result for 
$\cdimRok$. 
We divide the proof into two parts, namely the inequalities
$\dimRok(\alpha)\leq \dimRok(\beta)$ and 
$\dimRok(\alpha)\geq\dimRok(\beta)$. 
Since $A$ is unital and $\alpha$ is globalizable, it 
follows that $A_g$ is unital for all $g\in G$, with 
unit given by $1_g=1_A\beta_g(1_A)$.

To show that $\dimRok(\alpha)\leq \dimRok(\beta)$, it suffices
to assume that $d=\dimRok(\beta)$ is finite. 
Let $\ep>0$ and $F\subseteq A$ be given.
Using \autoref{lemma: Restriction intermediate lemma},
let $\widetilde{f}_g^{(j)}\in B$, for $g\in G$ and 
$j=0,\ldots,d$, such that 
\be\item[(a)] $\beta_g(\widetilde{f}_h^{(j)})=\widetilde{f}_{gh}^{(j)}$ for all $g,h\in G$ and $j=0,\ldots,d$;
\item[(b)] $\|\widetilde{f}_g^{(j)}\widetilde{f}_h^{(j)}\|<\ep$ for all $j=0,\ldots,d$ and all $g,h\in G$ with $g\neq h$;
\item[(c)] $\Big\| 1_B-\sum\limits_{g\in G}\sum\limits_{j=0}^d \widetilde{f}_g^{(j)}\Big\|\leq \ep$.
\item[(d)] $\|\widetilde{f}_g^{(j)}b-b\widetilde{f}_g^{(j)}\|<\ep$ for all $g\in G$, all $j=0,\ldots,d$ and all $b\in F$.\ee

Set $f_g^{(j)}=\widetilde{f}_g^{(j)}1_g\in A_g$ for all $g\in G$ and all $j=0,\ldots,d$. We claim that these are Rokhlin towers for $\alpha$ with respect to $(F,\ep)$. 
For $g,h\in G$ and $j=0,\ldots,d$, 
we use at the second step that $\beta_g|_{A_{g^{-1}}}=\alpha_g$ to get
\[\alpha_g(f_h^{(j)}1_{g^{-1}})=\alpha_g(\widetilde{f}_h^{(j)}1_h1_{g^{-1}})=\beta_g(\widetilde{f}_h^{(j)})1_{gh}1_g\stackrel{(a)}{=}\widetilde{f}_{gh}^{(j)}1_{gh}1_g=f_{gh}^{(j)}1_g,\]
thus verifying condition (1) in~\autoref{df:Rdim}.
Moreover, 
$\big\|f_g^{(j)}f_h^{(j)}\big\| \leq \big\|\widetilde{f}_g^{(j)}\widetilde{f}_h^{(j)}\big\|$
and hence condition (2) is also satisfied by (b) above. 
Since
\begin{align*}
\sum\limits_{g\in G}\sum\limits_{j=0}^{d}f_g^{(j)}&=\sum\limits_{g\in G}\sum\limits_{j=0}^{d}\beta_g(\widetilde{f}_1^{(j)})1_g
=\sum\limits_{g\in G}\sum\limits_{j=0}^{d}\beta_g(\widetilde{f}_1^{(j)}1_A)\beta_g(1_A)1_A=\sum\limits_{g\in G}\sum\limits_{j=0}^{d}\widetilde{f}_g^{(j)}1_A,
		\end{align*}
it follows from (c) that condition (3) is also satisfied. Finally,
given $b\in F$, $g\in G$ and $j=0,\ldots,d$, we have
\[\left\|f_g^{(j)}b-bf_g^{(j)}\right\|=\left\|\widetilde{f}_g^{(j)}b1_g-b\widetilde{f}_g^{(j)}1_g\right\|\leq \left\|\widetilde{f}_g^{(j)}b-b\widetilde{f}_g^{(j)}\right\|\stackrel{(d)}{=}\ep,\]
establishing condition (4). It follows that 
$\dimRok(\alpha)\leq d$, as desired. 
Note that 
\[\|f_g^{(j)}f_h^{(k)}-f_h^{(k)}f_g^{(j)}\|\leq 
\|\widetilde{f}_g^{(j)}\widetilde{f}_h^{(k)}-\widetilde{f}_h^{(k)}\widetilde{f}_g^{(j)}\|\]
for all $g,h\in G$ and $j,k=0,\ldots,d$. Thus, if
$\cdimRok(\beta)\leq d$ and the Rokhlin towers 
$\widetilde{f}_g^{(j)}$ for $\beta$ also satisfy condition
(5) in~\autoref{df:Rdim}, then the Rokhlin towers 
$f_g^{(j)}$ for $\alpha$ also satisfy (5) and hence 
$\cdimRok(\alpha)\leq d$.
\vspace{0.3cm}

We turn to the inequality $\dimRok(\beta)\leq \dimRok(\alpha)$, so we 
set $d=\dimRok(\alpha)$ and assume that $d<\infty$. 
Let $\ep>0$ and $F\subseteq B$ be a finite subset. 
Without loss of generality, we assume that $F$ contains
$1_g$ for all $g\in G$ and that it is $\beta$-invariant. 
Since $B$ is generated by the $\beta$-translations of $A$
(condition (d) in \autoref{df:enveloping}), 
there exist $a_g\in A$, for $g\in G$ such that $1_B=\sum\limits_{g\in G}\beta_g(a_g)$. Set $\ep_0= \frac{\ep}{|G|\max\limits_{g\in G}\left\|a_g\right\|}$. Using
\autoref{prop:RdimReductionUnital}, let
$f_g^{(j)}\in A_g$, for $g\in G$ and $j=0,\ldots,d$,
be positive contractions satisfying the following conditions:
\be\item[(i)] $\alpha_g(f_h^{(j)}1_{g^{-1}})=f_{gh}^{(j)}1_{g}$ for all $g,h\in G$ and $j=0,\ldots,d$;
\item[(ii)] $\|f_g^{(j)}f_h^{(j)}\|<\ep$ for all $j=0,\ldots,d$ and all $g,h\in G$ with $g\neq h$;
\item[(iii)] $\Big\|\sum\limits_{g\in G}\sum\limits_{j=0}^d f_g^{(j)}-1_A\Big\|<\ep$.
\item[(iv)] $\|f_g^{(j)}b-bf_g^{(j)}\|<\ep$ for all $g\in G$, all $j=0,\ldots,d$ and all $b\in F$.\ee

For $g\in G$ and $j=0,\ldots,d$, set
$\widetilde{f}_g^{(j)}=\beta_g(f_1^{(j)})\in B$.
We claim that the $\widetilde{f}_g^{(j)}$ are Rokhlin
towers for $\beta$ with respect to $(F, \ep)$.
Condition (1) in \autoref{df:Rdim} is clearly satisfied.
In order to check (2), let $j=0,\ldots,d$ and $g,h\in G$
with $g\neq h$ be given. Using that $f_1^{(j)}=f_1^{(j)}1_A$
at the second step, and that $1_{g^{-1}h}=1_A\beta_{g^{-1}h}(1_A)$ at the third, we get
\begin{align*}\left\|\widetilde{f}_g^{(j)}\widetilde{f}_h^{(j)}\right\|&=\left\|\beta_g(f_1^{(j)})\beta_h(f_1^{(j)})\right\|=\left\|f_1^{(j)}1_A\beta_{g^{-1}h}(1_Af_1^{(j)})\right\|\\
&=\left\|f_1^{(j)}1_{g^{-1}h}\beta_{g^{-1}h}(f_1^{(j)})\right\|
=\left\|f_1^{(j)}\alpha_{g^{-1}h}(1_{h^{-1}g}f_1^{(j)})\right\|\\
& \stackrel{\mathrm{(i)}}{=} \left\|f_1^{(j)}f_{g^{-1}h}^{(j)}\right\|\stackrel{\mathrm{(ii)}}{<} \ep_0.
\end{align*}
To check (3), 
it suffices to show that for any $a\in A$ and $h\in G$, we have 
\[\left\|\sum\limits_{j=0}^{d}\sum\limits_{g\in G}\widetilde{f}_g^{(j)}\beta_h(a)-\beta_h(a)\right\|<\|a\|\ep_0.\] 
Indeed, once this is established, and since 
$1_B=\sum\limits_{h\in G}\beta_h(a_h)$, it will 
follow that
\[\left\|\sum\limits_{j=0}^{d}\sum\limits_{g\in G}\widetilde{f}_g^{(j)}-1_B\right\|< |G|\max_{g\in G}\left\|a_g\right\|\ep_0=\ep,\]
thus establishing (3). Let $a\in A$ and $h\in G$ be given; without
loss of generality we assume that $\|a\|\leq 1$. 
Then:
\begin{align*}
\sum\limits_{j=0}^{d}\sum\limits_{g\in G}\widetilde{f}_g^{(j)}\beta_h(a)&=\sum\limits_{j=0}^{d}\sum\limits_{g\in G}\beta_g(f_1^{(j)})\beta_h(a)\\
&=\sum\limits_{j=0}^{d}\sum\limits_{g\in G}\beta_h(\beta_{h^{-1}g}(f_1^{(j)}1_A)1_Aa)\\
&=\sum\limits_{j=0}^{d}\sum\limits_{g\in G}\beta_h(\beta_{h^{-1}g}(f_1^{(j)}1_A\beta_{g^{-1}h}(1_A))a)\\
&=\sum\limits_{j=0}^{d}\sum\limits_{g\in G}\beta_h(\alpha_{h^{-1}g}(f_1^{(j)}1_{g^{-1}h})a)\\
&\stackrel{\mathrm{(i)}}{=}\sum\limits_{j=0}^{d}\sum\limits_{g\in G}\beta_h(f_{h^{-1}g}^{(j)}a)\\
&\stackrel{\!\mathrm{(iii)}}{\, \approx_\ep}_{\!_0}\! \beta_h(a),
\end{align*}
as desired. Finally, to check condition (4), let
$a\in F$, $g\in G$ and $j=0,\ldots,d$ be given. 
Using at the last step that $\beta_{g^{-1}}(a)\in F$,
we get
\begin{align*}
\left\|\widetilde{f}_g^{(j)}a-a\widetilde{f}_g^{(j)}\right\|&=\left\|\beta_g(f_1^{(j)})a-a\beta_g(f_1^{(j)})\right\|\\
&=\left\|f_1^{(j)}\beta_{g^{-1}}(a)-\beta_{g^{-1}}(a)f_1^{(j)}\right\|\stackrel{\mathrm{(iv)}}{<}\ep_0.\end{align*}
This shows that $\dimRok(\beta)\leq \dimRok(\alpha)$.
Observe that the Rokhlin towers for $\beta$ that we
constructed satisfy the following identity 
for all $j,k=0,\ldots,d$ and $g,h\in G$:
\begin{align*}
 \widetilde{f}_g^{(j)}\widetilde{f}_h^{(k)}&=\beta_g(f_1^{(j)})\beta_h(f_1^{(k)})=\beta_g\left(f_1^{(j)}1_A\beta_{g^{-1}h}(1_Af_1^{(k)})\right)\\
&=\beta_g\left(f_1^{(j)}\alpha_{g^{-1}h}(1_{h^{-1}g}f_1^{(k)})\right)
= \beta_g(f_1^{(j)}f_{g^{-1}h}^{(k)}).\end{align*}
By taking adjoints, we also get 
$\widetilde{f}_h^{(k)}\widetilde{f}_g^{(j)}=
\beta_g(f_{g^{-1}h}^{(k)}f_1^{(j)})$.
In particular, 
\[\left\| \widetilde{f}_g^{(j)}\widetilde{f}_h^{(k)}-\widetilde{f}_h^{(k)}\widetilde{f}_g^{(j)}\right\|\leq
\left\|f_1^{(j)}f_{g^{-1}h}^{(k)}-f_{g^{-1}h}^{(k)}f_1^{(j)}\right\|.
\]
Thus, if
$\cdimRok(\alpha)\leq d$ and the Rokhlin towers 
$f_g^{(j)}$ for $\alpha$ also satisfy condition
(5) in~\autoref{df:Rdim}, then the Rokhlin towers 
$\widetilde{f}_g^{(j)}$ for $\beta$ also satisfy (5) 
and hence 
$\cdimRok(\beta)\leq d$.
\end{proof}

\section{Structure of the crossed product}\label{sec: StructureCrossProd}
Since its introduction in \cite{HirWinZac_rokhlin_2015}, Rokhlin
dimension has predominantely been used to study structural properties
of the associated crossed products. 
Of greatest relevance are those properties
related to the classification programme for nuclear \ca s, such as
the UCT,
finiteness of the nuclear dimension, absorption of a strongly
self-absorbing \ca \cite{TomWin_strongly_2007}, or divisibility properties on $K$-theory. 

In this section, we explore the structure of the 
crossed products and fixed point algebras 
of partial actions with finite Rokhlin dimension.
Our approach makes use of the decomposition property introduced and 
studied in \cite{AbaGarGef_decomposable_2020}, which we recall
in Subsection~4.1. For unital partial actions, a more
direct argument can be given, which even yields better results
(notably in the zero-dimensional case).
In this setting, we show that the crossed product is Morita equivalent
to the fixed point algebra, a fact that fails for general
partial actions of finite Rokhlin dimension.

\subsection{The decomposition property}
An important ingredient in our study of partial actions with
finite Rokhlin dimension is our previous 
work~\cite{AbaGarGef_decomposable_2020} on decomposable partial 
actions. For the convenience of the reader, 
we make here a small digression. 

\begin{df}\label{df:TnG}
Let $G$ be a finite group.
Given $n=1,\ldots,|G|$, we define the \emph{space of $n$-tuples} 
of $G$ to be 
\[\mathcal{T}_n(G)=\{\tau \subseteq G \colon 1\in \tau \mbox{ and } |\tau|=n\}.\]
For $g\in G$, we set 
$\mathcal{T}_n(G)_{g}=\{\tau\in \mathcal{T}_n(G)\colon g\in \tau\}$.
There is a canonical partial action 
$\texttt{Lt}$ of $G$ on $\mathcal{T}_n(G)$, with 
$\texttt{Lt}_g\colon \mathcal{T}_n(G)_{g^{-1}}\to \mathcal{T}_n(G)_g$ induced by left translation by $g$.
For $\tau \in \mathcal{T}_n(G)$, we write $G\cdot \tau\subseteq \mathcal{T}_n(G)$ for the orbit of
$\tau$ with respect to $\texttt{Lt}$.
\end{df}

We will adopt the following convention.
Let $\alpha$ be a 
partial action of a finite group $G$ on a \ca\ $A$, 
and let $n=1,\ldots,|G|$.
For $\tau\in \mathcal{T}_n(G)$, we write $A_{\tau}$ for the ideal $A_{\tau}=\bigcap_{g\in \tau} A_{g}$.
Then
$\alpha_{g}(A_{\tau})=A_{g\tau}$ for
$g\in G$ and $\tau\in \mathcal{T}_n(G)_{g^{-1}}$
For $\tau \in \mathcal{T}_n(G)$, we 
set $A_{G\cdot\tau}=\sum_{g\in \tau^{-1}} A_{g\tau}$.
When $A=C_0(X)$ for a locally
compact Hausdorff space $X$, we write $X_\tau$ for the 
spectrum of $C_0(X)_\tau$, and identify it canonically
with $\bigcap_{g\in \tau}X_g$. We use similar notation
for $X_{G\cdot \tau}$.

\begin{df}\label{df:nIntProp}
Let $G$ be a finite group, let $A$ be a \ca, and let 
$\alpha=((A_g)_{g\in G}, (\alpha_g)_{g\in G})$ be a partial action of $G$ on $A$. 
Given $n=1,\ldots,|G|$, 
we say that $\alpha$ has the 
\emph{$n$-decomposition property} if
\be\item[(a)] $A=\overline{\sum\limits_{\tau\in\mathcal{T}_n(G)}A_\tau}$, and 
\item[(b)]$A_{\tau}\cap A_g= \{0\}$ for all $\tau\in\mathcal{T}_{n}(G)$ and all $g\in G$ such that $g\notin \tau$. \ee
We say that $\alpha$ is \emph{decomposable} 
if it has the $n$-decomposition property for some $n\in \N$. 
A partial action on a locally compact space $X$ is said to have
the \emph{$n$-decomposition property} if the induced partial action on $C_0(X)$ has it.  
\end{df} 

\begin{nota}\label{nota:Section}
Adopt the notation from \autoref{df:nIntProp}. 
For $\tau\in\mathcal{T}_n(G)$,
we set 
$H_\tau=\{h\in G\colon h\tau=\tau\}$ and
$m_\tau=\frac{n}{|H_\tau|}-1$.
Using Lemma~2.8 in~\cite{AbaGarGef_decomposable_2020}, we 
fix elements $x_0^\tau=1, x_1^\tau,\ldots,x_{m_\tau}^\tau\in G$ satisfying
\[\tau=H_\tau\sqcup H_\tau x^\tau_1\sqcup \ldots\sqcup H_\tau x^\tau_{m_\tau}.\]
Whenever $\tau$ is understood from the context, we will omit it from the notation
for $H_\tau$, $m_\tau$ and $x_j^\tau$, for $j=1,\ldots,m_\tau$. 
Let $\mathcal{O}_n(G)$ be the orbit space for the partial system described in \autoref{df:TnG}. We denote by $\kappa\colon \mathcal{T}_n(G)\to \mathcal{O}_n(G)$ the canonical
quotient map, and fix, for the rest of this work, a global section
$\eta\colon \mathcal{O}_n(G)\to \mathcal{T}_n(G)$ for it. 
For $z\in\mathcal{O}_n(G)$, we write $\tau_z$ for $\eta(z)$; we write 
$H_z$ for $H_{\tau_z}$; 
and $m_z$ for $m_{\tau_z}$. 
\end{nota}

The following is part of Proposition~2.11 in~\cite{AbaGarGef_decomposable_2020}.

\begin{prop}\label{prop:EquivDecomp}
Let $G$ be a finite group, let $A$ be a \ca, let $n=1,\ldots,|G|$, let 
$\alpha$ be a partial action of $G$ on $A$ with the
$n$-decomposition property, and let 
$\tau\in\mathcal{T}_n(G)$. Adopt the conventions from \autoref{nota:Section}. Then:
\be
\item The restriction of $\alpha|_{H_\tau}$ to $A_\tau$ is a global action;
\item There is a natural $G$-equivariant isomorphism
\[\varphi\colon \bigoplus_{z\in\mathcal{O}_n(G)} A_{G\cdot \tau_z} \to A\]
given by $\varphi(a)=\sum_{z\in \mathcal{O}_n(G)} a_z$ for all
$a=(a_z)_{z\in\mathcal{O}_n(G)}$.
\ee
\end{prop}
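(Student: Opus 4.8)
The plan is to reduce both parts to a single elementary orthogonality observation and then regroup ideals. First I would establish the following \emph{key lemma}: for distinct $\tau,\tau'\in\mathcal{T}_n(G)$, the ideals $A_\tau$ and $A_{\tau'}$ are orthogonal. Since $|\tau|=|\tau'|=n$, neither set contains the other, so one may pick $g\in\tau\setminus\tau'$; then $A_\tau\subseteq A_g$ because $g\in\tau$, while condition (b) of \autoref{df:nIntProp} gives $A_{\tau'}\cap A_g=\{0\}$ because $g\notin\tau'$, whence $A_\tau A_{\tau'}\subseteq A_\tau\cap A_{\tau'}=\{0\}$. Because $G$, and hence $\mathcal{T}_n(G)$, is finite, $\sum_{\tau\in\mathcal{T}_n(G)}A_\tau$ is a finite sum of pairwise orthogonal closed ideals; it is therefore already closed and canonically isomorphic to the $c_0$-direct sum $\bigoplus_{\tau\in\mathcal{T}_n(G)}A_\tau$. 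Combined with condition (a) of \autoref{df:nIntProp}, this yields $A=\bigoplus_{\tau\in\mathcal{T}_n(G)}A_\tau$.

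For part (1), I would first note that $H_\tau$ is a subgroup of $G$, being the stabilizer of $\tau$ under $\texttt{Lt}$, and that $h\in H_\tau$ forces $h,h^{-1}\in\tau$ (apply $h$ and $h^{-1}$ to $1\in\tau$). Hence $A_\tau=\bigcap_{g\in\tau}A_g$ is contained in $A_h\cap A_{h^{-1}}$, so $A_\tau$ lies in the domain of $\alpha_h$ and $\alpha_h(A_\tau)=A_{h\tau}=A_\tau$. Thus each $\alpha_h|_{A_\tau}$ is an automorphism of $A_\tau$, and since all relevant domains contain $A_\tau$, the partial-action relation that $\alpha_{h_1h_2}$ extends $\alpha_{h_1}\circ\alpha_{h_2}$ becomes a genuine equality on $A_\tau$; together with $\alpha_1=\id$ this shows $(\alpha_h|_{A_\tau})_{h\in H_\tau}$ is a global action of $H_\tau$ on $A_\tau$.

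For part (2), I would observe that the $\texttt{Lt}$-orbit of $\tau_z$ is $\{g\tau_z:g\in\tau_z^{-1}\}$, so by the key lemma $A_{G\cdot\tau_z}=\sum_{\tau'\in G\cdot\tau_z}A_{\tau'}$ is itself a finite direct sum of orthogonal ideals. Since $\eta$ is a section for $\kappa$, the set $\mathcal{T}_n(G)$ partitions as $\bigsqcup_{z\in\mathcal{O}_n(G)}G\cdot\tau_z$, and regrouping the decomposition from the key lemma along orbits gives
\[ A=\bigoplus_{\tau\in\mathcal{T}_n(G)}A_\tau=\bigoplus_{z\in\mathcal{O}_n(G)}\ \bigoplus_{\tau'\in G\cdot\tau_z}A_{\tau'}=\bigoplus_{z\in\mathcal{O}_n(G)}A_{G\cdot\tau_z}, \]
the outer identification being realized precisely by $\varphi(a)=\sum_{z}a_z$. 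It then remains to verify that each $A_{G\cdot\tau_z}$ is $\alpha$-invariant, which makes $\varphi$ intertwine the diagonal $G$-action with $\alpha$. Writing $A_{G\cdot\tau_z}\cap A_{g^{-1}}=\bigoplus_{\tau'\in G\cdot\tau_z}(A_{\tau'}\cap A_{g^{-1}})$ (an ideal of a finite direct sum splits as the direct sum of its components), for each $\tau'\in G\cdot\tau_z$ one has either $g^{-1}\in\tau'$, in which case $\alpha_g(A_{\tau'})=A_{g\tau'}$ with $g\tau'\in G\cdot\tau_z$, or $g^{-1}\notin\tau'$, in which case $A_{\tau'}\cap A_{g^{-1}}=\{0\}$ by condition (b); in either case $\alpha_g$ maps $A_{G\cdot\tau_z}\cap A_{g^{-1}}$ into $A_{G\cdot\tau_z}$.

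The step I expect to be the main obstacle is not any individual estimate but the correct handling of the partiality of $\alpha$ in the last step: the decisive point is the dichotomy that $\alpha_g$ either carries a summand $A_{\tau'}$ isomorphically onto another summand within the same orbit block or annihilates it entirely, with no intermediate ``partial'' behaviour — and it is exactly condition (b) of the $n$-decomposition property that guarantees this, hence that the orbit blocks $A_{G\cdot\tau_z}$ are genuine $G$-invariant ideals and that $\varphi$ is equivariant.
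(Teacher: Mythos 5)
Your proof is correct. Note that the paper itself does not prove this statement—it is quoted verbatim from Proposition~2.11 of \cite{AbaGarGef_decomposable_2020}—so there is no in-paper argument to compare against; your orthogonality lemma (distinct $n$-tuples give orthogonal ideals, via condition (b) of \autoref{df:nIntProp}), the resulting internal decomposition $A=\bigoplus_{\tau\in\mathcal{T}_n(G)}A_\tau$, and the dichotomy ``$\alpha_g$ maps $A_{\tau'}$ onto $A_{g\tau'}$ or kills $A_{\tau'}\cap A_{g^{-1}}$'' are exactly the natural ingredients, and all steps check out. The only point worth spelling out a touch more is equivariance of $\varphi$ as a map of \emph{partial} actions: besides invariance of each orbit block, one should note that the splitting fact you invoke also gives $A_g=\bigoplus_{z}\bigl(A_{G\cdot\tau_z}\cap A_g\bigr)$, so $\varphi$ carries the domains of the direct-sum partial action onto the domains of $\alpha$; this is immediate from what you wrote.
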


By part~(2) above, many facts
about decomposable partial actions can be reduced to
the $G$-invariant direct summands $A_{G\cdot\tau}$.
In particular, for many purposes it suffices
to work with a single $\tau\in\mathcal{T}_n(G)$ 
and the induced partial action on $A_{G\cdot \tau}$.

Next, we recall Theorem~6.1 from~\cite{AbaGarGef_decomposable_2020},
which asserts that every partial action of a
finite group is canonically an iterated extension of 
decomposable partial actions. It follows that many aspects
about partial actions of finite groups can be reduced to
the case of decomposable partial actions, as long as one
has control over the resulting equivariant extension 
problem (which is in general quite complicated). 

\begin{thm}\label{thm: FinGrpIntersProp}
Let $G$ be a finite group, let $A$ be a \ca, and let 
$\alpha$ be a partial action of $G$ on $A$. 
Then there are canonical equivariant extensions 
\[\xymatrix{0\ar[r] & (D^{(k)},\delta^{(k)}) \ar[r]& (A^{(k)},\alpha^{(k)})\ar[r] & (A^{(k-1)},\alpha^{(k-1)})\ar[r]&0,}
\]
for $2\leq k \leq |G|$, satisfying the following properties 
\be
\item[(a)] $A^{(|G|)}=A$ and $\alpha^{|G|}=\alpha$;
\item[(b)] $\delta^{(k)}$ has the $k$-decomposition property;
\item[(c)] $\alpha^{(1)}$ has the 1-decomposition property.
\ee
\end{thm}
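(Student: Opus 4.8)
The plan is to filter $A$ by the ideals generated by the multiple overlaps of the domains, and to read the decomposable subquotients off this filtration. For $n=1,\dots,|G|$ put
\[
I_n=\overline{\textstyle\sum_{\tau\in\mathcal{T}_n(G)}A_\tau},
\]
with $A_\tau=\bigcap_{g\in\tau}A_g$ as in \autoref{nota:Section}, and set $I_{|G|+1}=\{0\}$; note $I_1=A$ since $A_{\{1\}}=A_1=A$. Every $\tau\in\mathcal{T}_{n+1}(G)$ has a subset in $\mathcal{T}_n(G)$, so $A_\tau\subseteq I_n$ for such $\tau$, and hence $\{0\}=I_{|G|+1}\subseteq I_{|G|}\subseteq\dots\subseteq I_2\subseteq I_1=A$. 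The first point to check is that each $I_n$ is a $G$-invariant ideal. This uses the partial-action identity $\alpha_g(A_{g^{-1}}\cap A_h)=A_g\cap A_{gh}$: since $\alpha_g$ is a $*$-isomorphism $A_{g^{-1}}\to A_g$ it preserves intersections of ideals, so $\alpha_g(A_{g^{-1}}\cap A_\tau)=A_g\cap A_{g\tau}$ for every finite $\tau\subseteq G$; and for $|\tau|=n$ one has $A_{g\tau}\subseteq I_n$ (if $1\in g\tau$ then $g\tau\in\mathcal{T}_n(G)$, and if $1\notin g\tau$ then $A_{g\tau}=A_{g\tau\cup\{1\}}\subseteq A_{\tau'}$ for any $\tau'\in\mathcal{T}_n(G)$ with $\tau'\subseteq g\tau\cup\{1\}$). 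Summing over $\tau\in\mathcal{T}_n(G)$ and taking closures gives $\alpha_g(A_{g^{-1}}\cap I_n)\subseteq A_g\cap I_n$, and applying this to $g^{-1}$ yields the reverse inclusion.

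Next I would define, for $1\le k\le|G|$, the quotient $A^{(k)}=A/I_{k+1}$ equipped with its induced partial action $\alpha^{(k)}$, so that $A^{(|G|)}=A$ and $\alpha^{(|G|)}=\alpha$; and for $2\le k\le|G|$ the $G$-invariant ideal $D^{(k)}=I_k/I_{k+1}$ of $A^{(k)}$ with its induced partial action $\delta^{(k)}$. Then $A^{(k)}/D^{(k)}\cong A/I_k=A^{(k-1)}$, so one obtains equivariant extensions $0\to(D^{(k)},\delta^{(k)})\to(A^{(k)},\alpha^{(k)})\to(A^{(k-1)},\alpha^{(k-1)})\to 0$. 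These are canonical because the $I_n$ are defined intrinsically from $\alpha$ and because the induced partial action on a quotient by an invariant ideal is uniquely determined. It remains to identify the decomposition type of each piece.

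For $\alpha^{(1)}$ this is immediate: $A^{(1)}=A/I_2$ with $I_2=\overline{\sum_{g\neq1}A_g}$ (using $A_{\{1,g\}}=A_g$), and $\mathcal{T}_1(G)=\{\{1\}\}$, so condition (a) of \autoref{df:nIntProp} is trivial while condition (b) just says $(A^{(1)})_g=0$ for $g\neq1$, which is exactly $A_g\subseteq I_2$. For $\delta^{(k)}$ I would pass to primitive ideal spaces, using the lattice isomorphism between closed ideals of a $C^*$-algebra and open subsets of its primitive spectrum. Writing $U_g\subseteq\Prim(A)$ for the open set of $A_g$ and $\mathrm{ord}(P)=|\{g\in G:P\in U_g\}|\ge1$, the ideal $I_n$ corresponds to $V_n=\{P:\mathrm{ord}(P)\ge n\}=\bigcup_{\tau\in\mathcal{T}_n(G)}\bigcap_{g\in\tau}U_g$, so $\Prim(D^{(k)})=V_k\setminus V_{k+1}=\{P:\mathrm{ord}(P)=k\}$ and the domain $(D^{(k)})_g$ corresponds to $U_g\cap(V_k\setminus V_{k+1})$. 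Condition (a): every $P$ with $\mathrm{ord}(P)=k$ has $\tau_P:=\{g\in G:P\in U_g\}\in\mathcal{T}_k(G)$ and lies in the open set of $(D^{(k)})_{\tau_P}$, so these cover $\Prim(D^{(k)})$ and hence $\overline{\sum_{\tau\in\mathcal{T}_k(G)}(D^{(k)})_\tau}=D^{(k)}$. Condition (b): for $\tau\in\mathcal{T}_k(G)$ and $g\notin\tau$, the ideal $(D^{(k)})_\tau\cap(D^{(k)})_g$ corresponds to $\big(\bigcap_{h\in\tau\cup\{g\}}U_h\big)\cap(V_k\setminus V_{k+1})$; but any $P\in\bigcap_{h\in\tau\cup\{g\}}U_h$ has $\mathrm{ord}(P)\ge|\tau\cup\{g\}|=k+1$, so $P\in V_{k+1}$, whence this set is empty and $(D^{(k)})_\tau\cap(D^{(k)})_g=0$.

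The step I expect to require the most care is the passage through $\Prim(A)$ in the noncommutative case: one must verify that all the correspondences are applied correctly — that finite intersections and norm-closed sums of ideals go to intersections and unions of open sets, that an ideal is the whole algebra iff its open set is everything, and that an ideal of the subquotient $I_k/I_{k+1}$ corresponds to a relatively open subset of the locally closed set $V_k\setminus V_{k+1}$ — and, underpinning all of this, that $P\mapsto\mathrm{ord}(P)$ is invariant under the partial action, which is precisely what the iterated identity $\alpha_g(A_{g^{-1}}\cap A_\tau)=A_g\cap A_{g\tau}$ delivers. Alternatively one could avoid $\Prim(A)$ altogether and argue purely with ideals, at the cost of extra bookkeeping with the domains of the quotient partial actions; either way the statement follows once these routine verifications are in place.
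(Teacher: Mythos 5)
Your construction is correct, and it is precisely the canonical filtration the theorem refers to: note that this paper does not prove the statement in-text but recalls it as Theorem~6.1 of \cite{AbaGarGef_decomposable_2020}, so there is no internal proof to compare against, but the intended objects are exactly your invariant ideals $I_n=\overline{\sum_{\tau\in\mathcal{T}_n(G)}A_\tau}$, with $A^{(k)}=A/I_{k+1}$ and $D^{(k)}=I_k/I_{k+1}$. Your verification is sound: invariance of $I_n$ via $\alpha_g(A_{g^{-1}}\cap A_\tau)=A_g\cap A_{g\tau}$ (together with the distributivity $J\cap\overline{\sum_i K_i}=\overline{\sum_i (J\cap K_i)}$ of the closed-ideal lattice, which you use implicitly when summing over $\tau$), the identification $A^{(k)}/D^{(k)}\cong A^{(k-1)}$, the triviality of the domains of $\alpha^{(1)}$, and the $k$-decomposition property of $\delta^{(k)}$. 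Routing the last verification through $\Prim(A)$ with the order function $\mathrm{ord}(P)$ is a clean organizational device; the facts it relies on (closed sums and finite intersections of ideals correspond to unions and intersections of open sets, and the ideal lattice of the subquotient $I_k/I_{k+1}$ corresponds to relatively open subsets of the locally closed set $V_k\setminus V_{k+1}$) are standard, and one could equally well phrase the same computation purely with ideals, as you observe. No genuine gaps.
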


We close this subsection by showing that the Rokhlin dimension of 
a decomposable partial action can be computed in terms of 
the global subsystems
$H_\tau \curvearrowright A_{\tau}$.

\begin{thm}\label{thm:dimRokResH}
Let $G$ be a finite group, let $A$ be a \ca, let $n=1,\ldots,|G|$,
and let $\alpha$ be a partial action of a finite group $G$ on $A$
with the $n$-decomposition property.
Fix $\tau\in\mathcal{T}_n(G)$. Then
\[\dimRok(\alpha|_{H_\tau})= \dimRok(\alpha|_{A_{G\cdot \tau}}) \ \ \mbox{ and } \ \ \cdimRok(\alpha|_{H_\tau})=\cdimRok(\alpha|_{A_{G\cdot \tau}}).\]
Consequently,
\[\dimRok(\alpha)= \max_{\tau\in\mathcal{T}_n(G)}{\dimRok(\alpha|_{H_\tau})} \ \ \mbox{ and } \ \ \cdimRok(\alpha)=\max_{\tau\in\mathcal{T}_n(G)}{\cdimRok(\alpha|_{H_\tau})}.\]
\end{thm}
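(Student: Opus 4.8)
The plan is to prove the first pair of equalities, $\dimRok(\alpha|_{H_\tau})=\dimRok(\alpha|_{A_{G\cdot\tau}})$ and the commuting-towers analogue; the ``consequently'' part then follows immediately by combining this with \autoref{prop:EquivDecomp}(2), which gives a $G$-equivariant isomorphism $A\cong\bigoplus_{z\in\mathcal{O}_n(G)}A_{G\cdot\tau_z}$, together with the elementary fact that Rokhlin dimension of a finite direct sum of partial actions is the maximum of the Rokhlin dimensions of the summands, and the observation that for each orbit $z$ we may pick the representative $\tau_z$ and that $\dimRok(\alpha|_{H_{\tau_z}})$ does not depend on the choice of representative in the orbit (replacing $\tau$ by $g\tau$ conjugates $H_\tau$ by $g$ and carries $A_\tau$ to $A_{g\tau}$ equivariantly, so the restricted global systems are conjugate).

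For the inequality $\dimRok(\alpha|_{A_{G\cdot\tau}})\le\dimRok(\alpha|_{H_\tau})$, I would start from Rokhlin towers for the \emph{global} action $H_\tau\curvearrowright A_\tau$ (which is global by \autoref{prop:EquivDecomp}(1)) and \emph{induce} them up to $A_{G\cdot\tau}=\sum_{g\in\tau^{-1}}A_{g\tau}$. Concretely, if $e_h^{(j)}\in A_\tau$ ($h\in H_\tau$, $j=0,\dots,d$) are Rokhlin towers for $\alpha|_{H_\tau}$ with respect to a suitable finite set and tolerance, then for an arbitrary $g\in G$ with $A_{g\tau}\ne\{0\}$ one writes $g=h x^\tau_i$ for unique $h\in H_\tau$ and $i\in\{0,\dots,m_\tau\}$ and sets $f_g^{(j)}=\alpha_{x^\tau_i}\!\big(\alpha_h(e_1^{(j)})1_{(x^\tau_i)^{-1}}\big)$ — more carefully, using the decomposition $\tau=\bigsqcup_i H_\tau x^\tau_i$ to transport the tower element sitting over $\tau$ to the ideal $A_{x^\tau_i\tau}$ via the (partial) map $\alpha_{x^\tau_i}$, and then translating within that ideal by the global action. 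The verification of conditions (1)--(4) (and (5) in the commuting case) is then bookkeeping: (1) reduces to the cocycle identity $\alpha_{gh}$ extends $\alpha_g\circ\alpha_h$ combined with the global cocycle identity inside $A_\tau$; (2) and (3) use that distinct cosets $H_\tau x^\tau_i$ give rise to ideals $A_{x^\tau_i\tau}$ which, although not orthogonal, have products controlled by the orthogonality of the $e_h^{(j)}$ over $A_\tau$ together with condition (b) in \autoref{df:nIntProp}; and (4) is inherited. This direction is close in spirit to the ``induction of Rokhlin towers'' argument in the globalizable case (\autoref{lemma: Restriction intermediate lemma}).

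For the reverse inequality $\dimRok(\alpha|_{H_\tau})\le\dimRok(\alpha|_{A_{G\cdot\tau}})$, the natural move is to \emph{compress}: given Rokhlin towers $f_g^{(j)}\in (A_{G\cdot\tau})_g$ for the partial action $\alpha|_{A_{G\cdot\tau}}$, cut down to the direct summand $A_\tau$ using an approximate identity, or a unit if $A_\tau$ is unital, that is quasi-central; then $e_h^{(j)}:=$ (compression of $f_h^{(j)}$ to $A_\tau$), for $h\in H_\tau$, should be Rokhlin towers for the global system $H_\tau\curvearrowright A_\tau$. The key point to check is that conditions (1)--(4) for $\alpha|_{A_{G\cdot\tau}}$, restricted to witnesses and elements drawn from $A_\tau$, become exactly the defining conditions for the \emph{global} Rokhlin dimension of $\alpha|_{H_\tau}$; in particular the ``$x\in A_{g^{-1}}\cap F$'' witness in condition (1) becomes the ordinary multiplicative-witness condition of the global definition since for $h\in H_\tau$ one has $(A_\tau)_{h^{-1}}=A_\tau$.

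\textbf{Main obstacle.} The delicate part is the compression/quasi-centrality argument in the reverse inequality when $A_\tau$ (equivalently $A_{G\cdot\tau}$) is non-unital: one must choose an approximate identity of the ideal $A_\tau\subseteq A_{G\cdot\tau}$ that is simultaneously quasi-central \emph{and} compatible with the $H_\tau$-action and with the domains $A_{g^{-1}}$, so that the compressed elements land back in $A_\tau$, remain positive contractions, and still satisfy (1)--(4) up to $\varepsilon$. A secondary, more combinatorial difficulty in the forward inequality is keeping careful track, in conditions (2) and (3), of the non-orthogonal overlaps between the ideals $A_{x^\tau_i\tau}$ for different coset representatives $x^\tau_i$, and ensuring the renormalization needed for (3) does not destroy (2); this is where property (b) of \autoref{df:nIntProp} is essential.
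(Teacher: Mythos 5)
Your overall strategy (reduce to the two inequalities between $\dimRok(\alpha|_{H_\tau})$ and $\dimRok(\alpha|_{A_{G\cdot\tau}})$, then use \autoref{prop:EquivDecomp}(2) for the ``consequently'' part) matches the paper, and your forward direction is in the same spirit as the paper's: transport towers from $A_\tau$ to its translates. But there is a genuine gap in your reverse inequality. You propose to take Rokhlin towers $\xi_g^{(j)}\in (A_{G\cdot\tau})_g$ for the partial action and simply compress the elements indexed by $h\in H_\tau$ into $A_\tau$. This fails condition (3) of \autoref{df:Rdim} whenever $H_\tau\subsetneq\tau$: for a witness $a\in A_\tau$ the partial-action towers only guarantee $\sum_j\sum_{g\in G}\xi_g^{(j)}a\approx a$, and by condition (b) of \autoref{df:nIntProp} the terms with $g\notin\tau$ vanish against $a$, but the terms with $g\in\tau\setminus H_\tau$ (i.e.\ $g$ in the cosets $H_\tau x_\ell$ with $\ell\geq 1$) are in general neither zero nor small. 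So the compressions of the $H_\tau$-indexed towers alone do not sum to (approximately) a unit on $A_\tau$; already for $\tau=\{1,g\}$ with $H_\tau=\{1\}$ the mass can sit entirely over the index $g$. The paper's fix is exactly a regrouping over cosets: with $\pi\colon A_{G\cdot\tau}\to A_\tau$ the projection onto the direct summand $A_\tau$, one sets $f_h^{(j)}=\sum_{q\in H_\tau\setminus G}\pi(\xi^{(j)}_{hg_q})$, so that $\sum_{h\in H_\tau}f_h^{(j)}$ collects all of $\sum_{g\in G}\pi(\xi_g^{(j)})$; conditions (1), (2), (4) then follow from the corresponding conditions for the $\xi_g^{(j)}$ (using that $hg_q\neq h'g_p$ whenever $h\neq h'$). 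Relatedly, your ``main obstacle'' about quasi-central, $H_\tau$-compatible approximate identities is a non-issue: $A_\tau$ is an orthogonal direct summand of $A_{G\cdot\tau}\cong\bigoplus_{\ell=0}^{m}A_{x_\ell^{-1}\tau}$, so the compression $\pi$ is a $*$-homomorphism and satisfies $\pi(x)a=xa$ for $a\in A_\tau$; no quasi-centrality argument is needed.

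Two smaller points on your forward direction. First, the summands $A_{x_\ell^{-1}\tau}$ are mutually orthogonal ideals (this is part of the decomposition structure used in the paper), so the ``non-orthogonal overlaps'' you worry about do not occur; what the decomposition property is actually needed for is the case analysis showing that products like $f_{gk}^{(j)}\alpha_g(x)$ vanish when the group elements fall outside the relevant translate of $\tau$. Second, your formula transports $e^{(j)}_h$ by $\alpha_{x_i^\tau}$, which requires $(x_i^\tau)^{-1}\in\tau$ and is not legitimate in general; the correct move (as in the paper) is to apply $\alpha_{x_\ell^{-1}}$ (allowed since $x_\ell\in\tau$) and index the resulting towers by $g=x_\ell^{-1}h\in\tau^{-1}$, setting $f_g^{(j)}=0$ for $g\notin\tau^{-1}$. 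With these corrections the forward direction goes through essentially as you outline.
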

\begin{proof}
We begin by observing that the last two identities are consequences
of the first two, by part~(2) of \autoref{prop:EquivDecomp}.

We give a proof for the first equality; the proof for $\cdimRok$ is similar. We start by showing $\dimRok(\alpha|_{A_{G\cdot\tau}})\leq \dimRok(\alpha|_{H_\tau})$. For this
we set $d=\dimRok(\alpha|_{H_\tau})$ and assume that $d<\infty$. 
Adopt \autoref{nota:Section}, and fix
$x_0=1,x_1,\ldots, x_m\in G$ with 
\[\tau= H_\tau\sqcup H_\tau x_1\sqcup\ldots\sqcup H_\tau x_m.\]
Let $F\subseteq A_{G\cdot\tau}$ be a finite subset consisting
of contractions, and let $\ep>0$. 
Since $A_{G\cdot \tau}$ can be canonically identified with 
$\bigoplus_{\ell=0}^{m} A_{x_\ell^{-1}\tau}$, we may assume that $F$ can be written as a disjoint
union $F=F_0\sqcup \ldots\sqcup F_m$, where $F_\ell\subseteq A_{x_\ell^{-1}\tau}$ for every $\ell=0,\ldots,m$.
Set $K=\bigcup_{\ell=1}^m \alpha_{x_\ell}(F_\ell)\subseteq A_{\tau}$, and let $\xi_h^{(j)}\in A_\tau$,
for $h\in H_\tau$ and $j=0,\ldots,d$, 
be Rokhlin towers for $\alpha|_{H_\tau}$ with respect to 
$(K,\ep)$. (Note that $\xi_h^{(j)}\neq 0$ for all $h\in H_\tau$ 
and all $j=0,\ldots,d$.)
For $g\in G$ and $j=0,\ldots,d$, we set 
\[f_g^{(j)}=\begin{cases}
\alpha_{x_\ell^{-1}}(\xi_h^{(j)}), & \text{ if } g=x_\ell^{-1}h, \text{ for some } \ell=0,\ldots, m, \text{ and } h\in H_\tau\\
0, & \text{otherwise}.
\end{cases} \]
Observe that $f_g^{(j)}$ is well-defined (because 
$x_\ell^{-1}H_\tau\cap x_{r}^{-1}H_\tau=\emptyset$ if $\ell\neq r$),
and that it is a positive contraction in 
$A_{G\cdot\tau}\cap A_g$.
We claim that the $f_g^{(j)}$ satisfy the conditions in \autoref{df:Rdim}, and thus witness the fact that 
$\dimRok(\alpha|_{A_{G\cdot\tau}})\leq \dimRok(\alpha|_{H_\tau})$. 

In order to check (1), Let $g,k\in G$, $j=0,\ldots,d$, let 
$\ell=0,\ldots,m$, and $ x\in A_{g^{-1}}\cap F_\ell$. We need to show 
that 
\[\label{eqn:4.1}\tag{4.1}\left\|\alpha_g(f_{k}^{(j)}x)-f_{gk}^{(j)}\alpha_g(x) \right\|<\ep.\]
Since by the decomposition property (specifically condition (b)
in \autoref{df:nIntProp}), 
the element $x\in A_{g^{-1}}\cap F_\ell$ is zero (and hence the 
inequality holds trivially) whenever
$g^{-1}\notin x_{\ell}^{-1}\tau$, it suffices to assume that
there exist unique $h\in H_\tau$ and $r=0,\ldots,m$ such that 
$g=x_r^{-1}hx_\ell$. By construction, we have
$f_k^{(j)}=0$ unless $k\in \tau^{-1}$, and similarly
$f_{gk}^{(j)}=0$ unless $gk\in\tau^{-1}$. We accordingly
divide the proof into three cases:

\textbf{Case~1:} $k\notin \tau^{-1}$, so that
$f_k^{(j)}=0$. We claim that $f_{gk}^{(j)}\alpha_g(x)=0$.
Arguing by contradiction, assume that the product 
$f_{gk}^{(j)}\alpha_g(x)$, which 
belongs to $A_{gk}\cap A_\tau$, is 
nonzero. By the intersection property, we must have $gk\in \tau$
and thus there are unique $s=0,\ldots,m$ and $h_1\in H_\tau$ with $gk=x_s^{-1}h_1$. 
Since  
$f_{gk}^{(j)}\alpha_g(x)$, which belongs to $A_{x_r^{-1}\tau}\cap A_{x_{s}^{-1}\tau}$, is nonzero, the decomposition property implies that $r=s$. Thus 
\[k=g^{-1}x_s^{-1}h_1=x_t^{-1}hx_rx_s^{-1}h_1=x_\ell^{-1}hh_1\in x_\ell^{-1}H\subseteq \tau^{-1},\] 
thus contradicting our assumption. This 
verifies $(\ref{eqn:4.1})$ in this case.

\textbf{Case~2:} $gk\notin \tau^{-1}$, so that $f_{gk}^{(j)}=0$. We claim that $\alpha_g(f_k^{(j)}x)=0$. 
Arguing by contradiction, assume that the product 
$f_k^{(j)}x$, which 
belongs to $A_{k}\cap A_{x_\ell^{-1}\tau}$, is 
nonzero. By the intersection property, we must have $k\in x_\ell^{-1}\tau$,
and thus there is $h_2\in H_\tau$ with $k=x_\ell^{-1}h_2$. Thus 
\[gk= x_r^{-1}hx_\ell x_\ell^{-1}h_2=x_r^{-1}hh_2\in x_r^{-1}H\subseteq \tau^{-1},
\]
thus contradicting our assumption. This 
verifies $(\ref{eqn:4.1})$ in this case.

\textbf{Case~3:} $gk,k \in \tau^{-1}$. 
Then there exists $\widetilde{h}\in H_\tau$ with $k=x_\ell^{-1}\widetilde{h}$, so that $gk=x_{r}^{-1}h\widetilde{h}$. 
Thus, 
\begin{align*}
\left\| \alpha_g(f_k^{(j)}x)-f_{gk}^{(j)}\alpha_g(x)\right\|
=\Big\|\underbrace{\alpha_{x_r^{-1}hx_\ell}(\alpha_{x_\ell^{-1}}(\xi_{\widetilde{h}}^{(j)})x)}_{\approx_{\ep} \
\alpha_{x_r^{-1}}(\xi_{h\widetilde{h}}^{(j)})\alpha_{g}(x)}-\alpha_{x_r^{-1}}(\xi_{h\widetilde{h}}^{(j)})\alpha_{g}(x)\Big\|
< \ep,
\end{align*}
thus verifying $(\ref{eqn:4.1})$ also in this case. 

We turn to condition (2) in~\autoref{df:Rdim}
Let $g,k\in G$ with $g\neq k$, let $j=0,\ldots,d$, 
let $\ell=0,\ldots,m$ and let $a\in F_\ell$. We need to show that 
$f_{g}^{(j)}f_k^{(j)}a$ has norm at most $\ep$. 
We may assume that there exist $r,s=0,\ldots,m$ and 
$h_1,h_2\in H_\tau$ with $g=x_r^{-1}h_1$ and $k=x_s^{-1}h_2$ 
(or else either $f_g^{(j)}=0$ or $f_k^{(j)}=0$).
Additionally, since the product $f_{g}^{(j)}f_k^{(j)}a$
belongs to $A_{g}\cap A_k\cap A_{x_\ell^{-1}\tau}$,
we may assume that $r=s=\ell$ (or else $f_{g}^{(j)}f_k^{(j)}a=0$). In this case, we have 
\begin{align*}
 f_{g}^{(j)}f_k^{(j)}a= \alpha_{x_\ell^{-1}}(\xi_{h_1}^{(j)})\alpha_{x_\ell^{-1}}(\xi_{h_2}^{(j)})a
=\alpha_{x_\ell^{-1}}\Big(\underbrace{\xi_{h_1}^{(j)}\xi_{h_2}^{(j)}\alpha_{x_\ell}(a)}_{\approx_{\ep} \ 0}\Big)\approx_\ep 0,
\end{align*}
as desired. In order to check condition (3), 
let $\ell=0,\ldots,m$ and $a\in F_\ell$. Using at the second
step that a product of the form $\alpha_{x_r^{-1}}(\xi_h^{(j)})a$ is zero unless $r=\ell$, we have
\begin{align*}
\sum\limits_{j=0}^{d}\sum\limits_{g\in G} f_g^{(j)}a&=\sum\limits_{j=0}^{d}\sum\limits_{h\in H_\tau}\sum\limits_{r=0}^{m} \alpha_{x_r^{-1}}(\xi_h^{(j)})a\\
&=\sum\limits_{j=0}^{d}\sum\limits_{h\in H_\tau}\alpha_{x_\ell^{-1}}(\xi_h^{(j)})a\\
&=\sum\limits_{j=0}^{d}\sum\limits_{h\in H_\tau}\alpha_{x_\ell^{-1}}(\xi_h^{(j)}\alpha_{x_\ell}(a))\\
&=\alpha_{x_{\ell}^{-1}}\left(\sum\limits_{j=0}^{d}\sum\limits_{h\in H_\tau}\xi_h^{(j)}\alpha_{x_\ell}(a)\right)\approx_{\ep} \alpha_{x_\ell}(a),
\end{align*}
as desired. Finally, to check condition (4), let $g\in G$,
$j=0,\ldots,d$, $\ell=0,\ldots,m$ and $a\in F_\ell$ be given.
Since $f_g^{(j)}=0$ unless $g\in \tau^{-1}$, we may assume
that there are $h\in H_\tau$ and $s=0,\ldots,m$ such that 
$g=x_s^{-1}h$.
Since the products $f_g^{(j)}a$ and $af_g^{(j)}$ are both zero
unless $g\in x_{\ell}^{-1}\tau$, we may assume that $s=\ell$.
For $b\in F$, we have
\begin{align*}
\left\|\big(f_g^{(j)}a-af_g^{(j)}\big)b\right\|&=\left\|\big(\alpha_{x_\ell^{-1}}(\xi_h^{(j)})a-a\alpha_{x_\ell^{-1}}(\xi_h^{(j)})\big)b\right\|\\
&=\left\|\big(\xi_h^{(j)}\alpha_{x_\ell}(a)-\alpha_{x_\ell}(a)\xi_h^{(j)}\big)\alpha_{x_\ell}(b)\right\|<\ep.
\end{align*}
This completes the proof that $\dimRok(\alpha|_{A_{G\cdot\tau}})\leq \dimRok(\alpha|_{H_\tau})$. 

Next, we show $\dimRok(\alpha|_{H_\tau})\leq \dimRok(\alpha|_{A_{G\cdot\tau}})$. Set 
$d=\dimRok(\alpha|_{A_{G\cdot\tau}})$ and 
assume that $d<\infty$. 
Fix an approximate unit $(e_{\lambda})_{\lambda\in\Lambda}$ of $A_\tau$, and let $\pi\colon A_{G\cdot\tau}\to A_{\tau}$ be the quotient map given by $\pi(x)=\lim_{\lambda}xe_\lambda$
for all $x\in A_{G\cdot\tau}$. (The map $\pi$ does not depend
on the approximate identity, but this is not relevant to us.) Note that
\[\label{eqn:4.2}\tag{4.2}
\pi(x)a=xa 
\]
for all $x\in A_{G\cdot\tau}$ and all $a\in A_\tau$. 
Let $F\subseteq A_\tau$ be a finite subset and let $\ep>0$. We may assume without loss of generality that $F$ is 
$H_\tau$-invariant and consists of contractions.
For every right coset $q\in H_\tau\setminus G$, let $g_q\in G$
satisfy $H_\tau g_q=q$. Set $\ep_0=\frac{\ep}{[G:H_\tau]^2}$ and 
let $\xi_g^{(j)}\in A_{G\cdot\tau}\cap A_g$, for 
$g\in G$ and $j=0,\ldots, d$, be Rokhlin towers for 
$\alpha|_{A_{G\cdot \tau}}$ with respect to $(F,\ep_0)$.

For $h\in H_\tau$ and $j=0,\ldots, d$, we set
\[f_h^{(j)}= \sum\limits_{q\in H_\tau\setminus G}\pi(\xi^{(j)}_{hg_q})\in A_\tau. \]
We claim that these positive contractions witness 
that $\dimRok(\alpha|_{H_\tau})\leq d$ for $(F,\ep)$.

Let $h_1,h_2\in H_\tau$, let $j=0,\ldots, d$ and let $a\in F$ 
be given. Then
\begin{align*}
\left\| \alpha_{h_1}(f_{h_2}^{(j)})a-f_{h_1h_2}a\right\|&\leq \sum\limits_{q\in H_\tau\setminus G}\left\|
\alpha_{h_1}(\pi(\xi_{h_2g_q}^{(j)}))a-\xi_{h_1h_2g_q}^{(j)}a\right\|\\
&\stackrel{(\ref{eqn:4.2})}{=}\sum\limits_{q\in H_\tau\setminus G}\left\|
\alpha_{h_1}(\xi_{h_2g_q}^{(j)}\alpha_{h_1^{-1}}(a))-\xi_{h_1h_2g_q}^{(j)}a\right\|\\
&\leq [G:H_\tau]\ep_0\leq \ep,
\end{align*}
thus establishing condition (1) in~\autoref{df:Rdim}. In order
to prove (2), 
let $h_1,h_2\in H_\tau$ with $h_1\neq h_2$, let $j=0,\ldots ,d$, and let $a\in F$. Then
\begin{align*}
\left\| f_{h_1}^{(j)}f_{h_2}^{(j)}a\right\|=\Big\| \sum\limits_{p,q\in H_\tau\setminus G}\xi_{h_1g_q}^{(j)} \xi_{h_2g_p}^{(j)}a\Big\|
\leq [G:H_\tau]^2 \ep_0=\ep,
\end{align*}
using at the second step that $h_1g_q\neq h_2q_p$ for all $p,q\in H_\tau\setminus G$. To check (3), let $a\in F$. Then
\begin{align*}
\sum\limits_{j=0}^{d}\sum\limits_{h\in H_\tau}f_h^{(j)}a&= 
\sum\limits_{j=0}^{d}\sum\limits_{h\in H_\tau}\sum\limits_{q\in H_\tau\setminus G}\pi(\xi_{hg_q}^{(j)})a\\
&\stackrel{(\ref{eqn:4.2})}{=}\sum\limits_{j=0}^{d}\sum\limits_{h\in H_\tau}\sum\limits_{q\in H_\tau\setminus G}\xi_{hg_q}^{(j)}a\\
&=\sum\limits_{j=0}^{d}\sum\limits_{g\in G}\xi_{g}^{(j)}a\approx_\ep a.
\end{align*}
Finally, to check (4), let $h\in H_\tau$, $j=0,\ldots, d$, 
and $a,b\in F$ be given. Then
\begin{align*}
\left\| \big(f_h^{(j)}a-af_h^{(j)}\big)b\right\|&=\left\| \sum\limits_{q\in H_\tau\setminus G}\big(\xi_{hg_q}^{(j)}a-a\xi_{hg_q}^{(j)}\big)b\right\|\leq [G:H_\tau]\ep_0\leq \ep,
\end{align*}
as required. This shows that $\dimRok(\alpha|_{H_\tau})\leq d$, and completes the proof.
\end{proof}

\subsection{Crossed products and fixed point algebras}
Our next result shows
that a number of properties are preserved by formation of 
crossed products and fixed point algebras by partial actions 
with finite Rokhlin dimension. 
The kind of properties preserved in this setting is more 
restrictive than in the global setting, particularly 
since the property in question must pass to extensions. For 
\emph{unital} partial actions, we will show in 
\autoref{thm:CPRdimUnital} that even more
properties are preserved. 

Recall that if 
$\alpha$ is a partial action of a finite group $G$ on a \ca\ $A$, then 
its \emph{crossed product} $A\rtimes_\alpha G$ is the 
set of all formal linear combinations of elements of the form $a_gu_g$,
where $g\in G$ and $a_g\in A_g$, subject to the relations
\[a_gu_gb_hu_h=\alpha_g(\alpha_{g^{-1}}(a_g)b_h)u_{gh} \ \ \mbox{ and } \ \
 (a_gu_g)^*=\alpha_{g^{-1}}(a_g^*)u_{g^{-1}}.\]
We consider $A\rtimes_\alpha G$ with its greatest $C^*$-norm, which is
not hard to see to exist.   
\par Moreover, its \emph{fixed point algebra} $A^\alpha$ is defined as
\[A^\alpha=\{x\in A\colon  \alpha_g(xa_{g^{-1}})=x\alpha_{g}(a_{g^{-1}}) \mbox{ for all } g\in G \mbox{ and all } a_{g^{-1}}\in A_{g^{-1}}\}.\]



\begin{thm}\label{thm:CPRdim} 
Let $G$ be a finite group, and let $d\in \N$. 
Let \textbf{P} be a property for \ca s which is preserved by:
\bi\item[(E)] passage to ideals, quotients and extensions; 
\item[(M)] Morita equivalence;
\item[(C)] crossed products by \emph{global} actions
of $G$ with $\dimRok\leq d$. 
\ei
Let $A$ be a unital \ca, and let 
$\alpha$ be a partial action of $G$ on $A$. If 
$\dimRok(\alpha)\leq d$ and $A$ satisfies
\textbf{P}, then so do $A\rtimes_\alpha G$ and $A^\alpha$. 
In particular,
\be
\item If $\dimRok(\alpha)<\I$ and $\dimnuc(A)<\I$, then
$\dimnuc(A^\alpha),\dimnuc(A\rtimes_\alpha G)<\I$. Indeed,
\[\dimnuc(A\rtimes_\alpha G)\leq (|G|-1)(\dimRok(\alpha)+1)(\dimnuc(A)+1)+\dimnuc(A).\]
\item If $\dimRok(\alpha)<\I$ and $\dr(A)<\I$, then
$\dr(A^\alpha), \dr(A\rtimes_\alpha G)<\I$. Indeed,
\[\dr(A\rtimes_\alpha G)\leq (|G|-1)(\dimRok(\alpha)+1)(\dr(A)+1)+\dr(A).\]
\ee

A similar statement
is true for $\cdimRok$. In
particular, 
\be
\item[(3)] If $\cdimRok(\alpha)<\I$ and $\mathrm{sr}(A)<\I$, then 
$\mathrm{sr}(A^\alpha),\mathrm{sr}(A\rtimes_\alpha G)<\I$. Indeed,
\[\mathrm{sr}(A\rtimes_\alpha G)\leq \frac{|G|(\sr(A)+\cdimRok(\alpha)+3)-2}{2}.
\]
\item[(4)]
If $\cdimRok(\alpha)<\I$ and $\mathrm{RR}(A)<\I$, then 
$\mathrm{RR}(A^\alpha),\mathrm{RR}(A\rtimes_\alpha G)<\I$. 
\item[(5)] Let $\mathcal{D}$ be a strongly self-absorbing \ca. If $\cdimRok(\alpha)<\I$ and $A$ is $\mathcal{D}$-absorbing, 
then $A^\alpha$ and $A\rtimes_\alpha G$ are $\mathcal{D}$-absorbing as well. 
\ee
\end{thm}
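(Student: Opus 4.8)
The plan is to reduce the statement, via the decomposition machinery recalled in Subsection~4.1, to crossed products by \emph{global} actions of $G$, where hypothesis~(C) applies. By \autoref{thm: FinGrpIntersProp}, $\alpha=\alpha^{(|G|)}$ lies at the top of a tower of canonical equivariant extensions $0\to(D^{(k)},\delta^{(k)})\to(A^{(k)},\alpha^{(k)})\to(A^{(k-1)},\alpha^{(k-1)})\to0$, for $2\le k\le|G|$, in which each $\delta^{(k)}$ is decomposable and $\alpha^{(1)}$ is the trivial partial action (the unique one with the $1$-decomposition property, by \autoref{eg:trivialRp}). Each $A^{(k)}$ is a quotient of $A$, hence unital and satisfying \textbf{P} by (E); each $D^{(k)}$ is an ideal of $A^{(k)}$, hence satisfies \textbf{P}; and by \autoref{prop:PermProp} each $\dimRok(\delta^{(k)})\le\dimRok(\alpha)\le d$, and likewise for $\cdimRok$. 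Since the crossed-product functor for partial actions of a finite group is exact (see \cite{Exe_book_2017}), applying it to the tower exhibits $A\rtimes_\alpha G$ as an iterated extension of $A^{(1)}\rtimes_{\alpha^{(1)}}G=A^{(1)}$ and of the algebras $D^{(k)}\rtimes_{\delta^{(k)}}G$; by (E) it therefore suffices to establish \textbf{P} for crossed products by decomposable partial actions whose coefficient algebra satisfies \textbf{P}. (For $A^\alpha$ the same reduction will apply once we know the quotient maps $A^{(k)}\to A^{(k-1)}$ restrict to surjections on fixed point algebras; see the third paragraph.)

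So let $\delta$ have the $n$-decomposition property on a \ca\ $B$ satisfying \textbf{P}, with $\dimRok(\delta)\le d$, and fix $\tau\in\mathcal T_n(G)$. By part~(2) of \autoref{prop:EquivDecomp} the system splits $G$-equivariantly into the summands $(B_{G\cdot\tau_z},\delta|_{B_{G\cdot\tau_z}})$, so both $B\rtimes_\delta G$ and $B^\delta$ split accordingly and, by (E), it suffices to treat one summand. Now $B_{G\cdot\tau}$ is a finite direct sum of copies of the ideal $B_\tau$ of $B$ (identified via the maps $\delta_{x_\ell^\tau}$ of \autoref{nota:Section}) which $G$ permutes transitively, and on $B_\tau$ the action $\delta$ restricts to a \emph{global} action of the subgroup $H_\tau$ by part~(1) of \autoref{prop:EquivDecomp}; concretely the summand is a system built by induction from $(B_\tau,\delta|_{H_\tau},H_\tau)$. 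Consequently $B_{G\cdot\tau}\rtimes_\delta G$ is a matrix amplification of $B_\tau\rtimes_{\delta|_{H_\tau}}H_\tau$, and $(B_{G\cdot\tau})^{\delta}\cong(B_\tau)^{\delta|_{H_\tau}}$, which is a corner of $B_\tau\rtimes_{\delta|_{H_\tau}}H_\tau$ (compress by $\tfrac1{|H_\tau|}\sum_{h\in H_\tau}u_h$ in the multiplier algebra), hence Morita equivalent to an ideal of it. By (E) and (M) it thus suffices to show $\textbf{P}\big(B_\tau\rtimes_{\delta|_{H_\tau}}H_\tau\big)$. Since (C) concerns only actions of $G$, we induce a second time: by Green's imprimitivity theorem $B_\tau\rtimes_{\delta|_{H_\tau}}H_\tau$ is Morita equivalent to $\big(\mathrm{Ind}_{H_\tau}^{G}B_\tau\big)\rtimes G$ for the induced global action of $G$, whose coefficient algebra is a finite direct sum of copies of the ideal $B_\tau$ and so satisfies \textbf{P} by (E); a direct computation with Rokhlin towers shows that induction does not raise the Rokhlin dimension, so by \autoref{thm:dimRokResH} the induced action has $\dimRok\le\dimRok(\delta|_{H_\tau})\le\dimRok(\delta)\le d$. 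Now (C) and (M) give $\textbf{P}\big(B_\tau\rtimes_{\delta|_{H_\tau}}H_\tau\big)$, and unwinding the reductions yields $\textbf{P}(A\rtimes_\alpha G)$. The case of $\cdimRok$ is identical, carrying condition~(5) throughout and using the $\cdimRok$ versions of \autoref{prop:PermProp}, \autoref{thm:dimRokResH} and of hypothesis~(C).

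For the fixed point algebra it remains to see that $(A^{(k)})^{\alpha^{(k)}}\to(A^{(k-1)})^{\alpha^{(k-1)}}$ is surjective for $2\le k\le|G|$ (the kernel being $(D^{(k)})^{\delta^{(k)}}$); granting this, $A^\alpha$ is an iterated extension of $A^{(1)}$ and of the $(D^{(k)})^{\delta^{(k)}}$, the latter already handled above since for decomposable $\delta$ the fixed point algebra descends to $(B_\tau)^{\delta|_{H_\tau}}$, and we finish by (E). I expect \emph{this surjectivity to be the main obstacle}: it is precisely the point at which the averaging arguments available for global actions of finite groups are unavailable in the partial setting. I would deduce it from the explicit form of the canonical extensions in \cite[\S6]{AbaGarGef_decomposable_2020}, together with the fact that the fixed point functor \emph{is} exact on decomposable pieces, where (as in the previous paragraph) it reduces to fixed point functors of global actions of finite subgroups, for which averaging does apply. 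For unital \emph{partial} actions there is instead the stronger statement that $A^\alpha$ is Morita equivalent to $A\rtimes_\alpha G$ (\autoref{thm:MoritaEqFPtCP}), but that argument does not extend here since the domains $A_g$ need not be unital.

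Finally, the explicit estimates in (1)--(5) follow by tracking the quantitative versions of (C) through the reductions above. For nuclear dimension one has $\dimnuc(D^{(k)})\le\dimnuc(A)$ (ideal of a quotient), the global estimate of \cite{HirWinZac_rokhlin_2015} bounds $\dimnuc$ of each piece's crossed product by $(\dimRok(\alpha)+1)(\dimnuc(A)+1)-1$, unchanged under the matrix amplification and Morita equivalences above, and the trivial piece contributes only $\dimnuc(A^{(1)})\le\dimnuc(A)$; assembling the $|G|-1$ nontrivial extensions via $\dimnuc(E)\le\dimnuc(I)+\dimnuc(E/I)+1$ gives the stated bound, with the identical computation for $\dr$. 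For (3) and (4) one substitutes the corresponding commuting-tower estimates for stable and real rank (the factor $\tfrac12$ in (3) reflecting that stable rank drops under matrix amplification), and for (5) one uses that $\mathcal D$-absorption passes to ideals, quotients, extensions, Morita equivalence, matrix amplifications and finite direct sums, together with the global statement that a crossed product by an action of finite commuting-tower Rokhlin dimension is $\mathcal D$-absorbing whenever its coefficient algebra is.
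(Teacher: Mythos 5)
Your proposal follows the same overall strategy as the paper's proof: the tower of canonical equivariant extensions from \autoref{thm: FinGrpIntersProp}, exactness of the partial crossed product, the Rokhlin dimension permanence of \autoref{prop:PermProp}, reduction of each decomposable ideal $D^{(k)}$ to global systems over the isotropy subgroups $H_\tau$, and then (E), (M), (C), with the same references assembled the same way for the numerical estimates. Two points of comparison. First, where the paper simply invokes Theorem~C of \cite{AbaGarGef_decomposable_2020} to identify $D^{(k)}\rtimes_{\delta^{(k)}}G$ with a direct sum of matrix amplifications $M_{m_\tau}(D^{(k)}_\tau\rtimes_{\delta^{(k)}_\tau}H_\tau)$ and then applies hypothesis (C) directly to the global $H_\tau$-actions, you re-derive this structure and, noting that (C) is literally stated only for actions of $G$, detour through Green's imprimitivity and the claim that inducing from $H_\tau$ to $G$ does not raise Rokhlin dimension. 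This is more scrupulous about the letter of (C), but it imports an auxiliary lemma that you do not prove (it is true and provable by a routine tower computation; note however that \autoref{thm:dimRokResH}, as you cite it, concerns the partial action on $A_{G\cdot\tau}$ rather than the induced global action, so it does not literally cover this step). The paper's shortcut is harmless for the stated applications, since the cited global results hold for every finite group. Second, for $A^\alpha$ you flag surjectivity/exactness of the fixed point functor on the canonical extensions as the main obstacle and only sketch a resolution; the paper does essentially what you propose---apply the fixed point functor to the same extensions, use Theorem~4.5 of \cite{AbaGarGef_decomposable_2020} for the decomposable pieces, and use Morita equivalence of fixed point algebra and crossed product for global actions---and likewise omits the details, so your hedge matches the paper's own level of detail rather than constituting an additional gap. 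In short: correct in outline and essentially the paper's argument, with one sub-step (the $H_\tau$ versus $G$ issue) handled by a genuinely different, slightly heavier device.
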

\begin{proof}
Let \textbf{P} be a property as in the statement, let $A$ be a \ca\ satisfying
\textbf{P}, and let $\alpha$ be 
a partial action of $G$ on $A$ with $\dimRok(\alpha)\leq d$. 
By \autoref{thm: FinGrpIntersProp}, there
there are canonical equivariant extensions 
\[\label{eqn:4.3}\tag{4.3}
\xymatrix{0\ar[r] & (D^{(k)},\delta^{(k)}) \ar[r]& (A^{(k)},\alpha^{(k)})\ar[r] & (A^{(k-1)},\alpha^{(k-1)})\ar[r]&0,}
\]
for $2\leq k \leq |G|$, satisfying the following properties 
\be 
\item[(D.1)] $A^{(|G|)}=A$ and $\alpha^{|G|}=\alpha$;
\item[(D.2)] $\delta^{(k)}$ has the $k$-decomposition property;
\item[(D.3)] $\alpha^{(1)}$ has the 1-decomposition property.
\ee

In particular, each $A^{(k)}$ is a quotient of $A$,
and each $D^{(k)}$ is an ideal of a quotient of $A$. By (E), 
all of these \ca s satisfy \textbf{P}. 
By repeatedly applying \autoref{prop:PermProp}, we deduce that
\[\label{eqn:4.4}\tag{4.4}\dimRok(\alpha^{(k)})\leq d \ \ \text{ and } \ \ \dimRok(\delta^{(k)})\leq d\]
for all $k=2,\ldots,|G|$, while $\dimRok(\alpha^{(1)})=0$ by
(D.3) and \autoref{eg:trivialRp}.
For $k=2,\ldots,|G|$, apply crossed products
to ($\ref{eqn:4.3}$), to get the extension
\[\label{eqn:4.5}\tag{4.5}
\xymatrix{0\ar[r] & D^{(k)}\rtimes_{\delta^{(k)}}G \ar[r]& A^{(k)}\rtimes_{\alpha^{(k)}}G \ar[r] & A^{(k-1)}\rtimes_{\alpha^{(k-1)}}G\ar[r]&0.}
\]

\textbf{Claim:} \emph{$A^{(k)}\rtimes_{\alpha^{(k)}} G$ satisfies \textbf{P} for all $k=1,\ldots,|G|$.}
We prove this by induction on $k$.
Since $A^{(1)}\rtimes_{\alpha^{(1)}} G=A^{(1)}$ by
\autoref{eg:trivialRp} and $A^{(1)}$ is a quotient of $A$, 
this follows from (E). Assume we have proved it for $k-1$, and let us prove
it for $k$. 
Since \textbf{P} passes to extensions by (E), the exact sequence
in ($\ref{eqn:4.5}$) 
implies that it suffices to show that
$D^{(k)}\rtimes_{\delta^{(k)}} G$ satisfies \textbf{P}.
Combining (D.2) and Theorem~C 
in~\cite{AbaGarGef_decomposable_2020}, it follows that 
$D^{(k)}\rtimes_{\delta^{(k)}} G$ is isomorphic to a finite
direct sum of algebras of the form 
$M_{m_\tau}(D_\tau^{(k)}\rtimes_{\delta_\tau^{(k)}} H_\tau)$, for 
$m_\tau=k/|H_\tau|\leq |G|$ 
and $\tau\in\mathcal{T}_k(G)$, where $D^{(k)}_\tau$
is an ideal in $D^{(k)}$, $H_\tau$ is a subgroup of $G$, and 
$\delta_\tau^{(k)}$ is the \emph{global} action obtained as the 
restriction of $\delta^{(k)}$ to $H_\tau$ 
and to $D^{(k)}_\tau$. Thus, by (M) it suffices to show 
$D_\tau^{(k)}\rtimes_{\delta_\tau^{(k)}} H_\tau$ 
satisfies \textbf{P} for every $\tau\in\mathcal{T}_k(G)$. 
Using \autoref{thm:dimRokResH} at the first step, we have
\[\dimRok(\delta^{(k)}_\tau)\leq \dimRok(\delta^{(k)})
\stackrel{(\ref{eqn:4.4})}{\leq} d\]
Moreover, $D^{(k)}_\tau$ satisfies \textbf{P} by (E), 
since it is an ideal in $D^{(k)}$. It follows from (C)
that $D_\tau^{(k)}\rtimes_{\delta_\tau^{(k)}} H_\tau$ satisfies
\textbf{P}, as desired. This proves the claim.

Since $A\rtimes_\alpha G$ equals 
$A^{(|G|)}\rtimes_{\alpha^{(|G|)}} G$ by (D.1), 
this proves the first assertion
in the theorem. Note that an identical argument applies to
$\cdimRok$ in place of $\dimRok$. Moreover, the argument for 
fixed point algebras is analogous, by applying fixed point algebras
to the extensions in (\ref{eqn:4.4}), using Theorem~4.5
in~\cite{AbaGarGef_decomposable_2020} instead of Theorem~C there,
and using the fact that fixed point algebra and crossed product
are Morita equivalence in the global case.
We omit the details.

The properties listed in (1) through (5) are well known to
satisfy (E) and (M); see respectively \cite{WinZac_nuclear_2010, KirWin_covering_2004, Rie_dimension_1983, BroPed_algebras_1991}.
Finally, they also satisfy (C) by Corollary~4.25 in~\cite{GarLup_applications_2018} and Theorem~3.20 in~\cite{GarHirSan_Rokhlin_2017}. 

For the estimates in 
(1), (2) and (3), one combines the estimates from 
Corollary~4.25 in~\cite{GarLup_applications_2018} 
and Theorem~3.20 in~\cite{GarHirSan_Rokhlin_2017}, with the 
known estimates for $\dimnuc$, $\dr$, or $\mathrm{sr}$
of an extension or of $A\otimes M_n$, and applies these a total of
$|G|-1$ times to the extensions in ($\ref{eqn:4.5}$). 
We omit the details.
\end{proof}

We note in passing that if $\alpha$ is a partial action of a finite
group $G$ on $A$, and $A$ satisfies \textbf{P}, then $A\rtimes_\alpha
G\rtimes 
_{\hat{\alpha}}G$ also satisfies \textbf{P}. Indeed, $A$ is Morita
equivalent to an ideal $I$ of $A\rtimes_\alpha
G\rtimes_{\hat{\alpha}}G$ whose linear orbit under 
the bidual action $\Hat{\hat{\alpha}}$ is all of $A\rtimes_\alpha G\rtimes
_{\hat{\alpha}}G$ (\cite[Theorem~6.1]{Aba_enveloping_2003}). Since
\textbf{P} is preserved by Morita equivalence and by passage to
ideals, quotients and extensions, then 
$I$ and its translates $\Hat{\hat{\alpha}}_g(I)$, and each sum
$\Hat{\hat{\alpha}}_g(I)+\Hat{\hat{\alpha}}_{g'}(I)$, also satisfy     
\textbf{P}. So $A\rtimes_\alpha G\rtimes
_{\hat{\alpha}}G=\sum_{g\in G}\Hat{\hat{\alpha}}_g(I)$ satisfies
\textbf{P}.           

The fact that the preservation results for $A^\alpha$ and 
$A\rtimes_\alpha G$ from \autoref{thm:CPRdim} look identical suggests
that there may be a tighter connection between these algebras.
Indeed, in the global setting we have $A^\alpha\sim_M A\rtimes_\alpha G$
whenever $G$ is abelian and $\dimRok(\alpha)<\I$; see 
Corollary~1.18 in~\cite{GarHirSan_Rokhlin_2017}. 
The situation for partial actions is much more complicated, 
and there exist partial actions of finite groups with finite Rokhlin
dimension such that $A^\alpha$ is not Morita equivalent to 
$A\rtimes_\alpha G$:




\begin{eg}\label{eg:NotMEq}
Set $X=(0,2]$ and $U=(0,1)\cup (1,2)\subseteq X$, 
and let $\sigma\in \mathrm{Homeo}(U)$ be given by $\sigma(x)=x+1 
\, \mathrm{ mod }\, 2$ for all $x\in U$. 
Let $\alpha$ be the partial action of $G=\Z_2=\{-1,1\}$ on 
$A=C_0(X)$ induced by $\sigma$.
This action is considered in Example~5.2 of~\cite{AbaGarGef_decomposable_2020}, where 
it is shown that  $C_0(X)^{\Z_2}$ is not Morita equivalent to 
$C_0(X)\rtimes_\alpha\Z_2$. 

We claim that $\dimRok(\alpha)<\I$. Let $F\subseteq C_0(X)$
be a finite set, and let $\ep>0$. 
For $\delta>0$, let $f_\delta\in C_0(X)$ and $e_\delta\in C_0(U)$
be given by
\[f_\delta(x)=\begin{cases*}
      1 & if $x\geq \delta$ \\
      \mathrm{linear} & otherwise
    \end{cases*} \, \mbox{and } \ 
    e_\delta(x)=\begin{cases*}
      1 & if $x\in [\delta,1-\delta]\cup [1+\delta,2-\delta]$ \\
      \mathrm{linear} & otherwise.
    \end{cases*}\]
Find $\delta\in (0,1/4)$ such that $\|e_\delta a-a\|<\ep$ for all
$a\in F\cap C_0(U)$ and $\|f_\delta a-a\|<\ep$ for all $a\in F$. 
Set 
\[f_{-1}^{(0)}=e_\delta|_{(0,1)}, \ f_1^{(0)}=e_\delta|_{(1,2]}, \
f_{-1}=(f_\delta-e_\delta)_{(0,1)}, \ \mbox{ and } \ f_1^{(1)}=(f_\delta-e_\delta)_{(1,2]}\]
It is easy to check that the above positive contractions are Rokhlin
towers for $(F,\ep)$, thus showing that $\dimRok(\alpha)\leq 1$,
as desired. 
(In fact, one can also show that $\dimRok(\alpha)\neq 0$,
so that $\dimRok(\alpha)$ is actually 1.)
\end{eg}

\subsection{Unital partial actions}
For unital partial actions, 
a different approach can be used to obtain 
information about the crossed product, leading to 
a result which is stronger than \autoref{thm:CPRdim}, 
in that here we only demand that the property in question pass to
direct sums and summands, as opposed to general extensions and 
ideals; see \autoref{thm:CPRdimUnital}. 
Most significantly, this implies that the UCT is preserved
in this setting; note that the UCT is not in general known to pass 
to ideals or quotients, and hence does not satisfy condition (E)
in \autoref{thm:CPRdim}. The same applies to several
properties of the $K$-theory.
Additionally, we isolate the case of 
Rokhlin dimension zero, for which considerably 
more can be said with this approach.

In contrast with \autoref{eg:NotMEq},
we show next that $A^\alpha$ and $A\rtimes_\alpha G$
are always Morita equivalent whenever $\dimRok(\alpha)<\I$
and $\alpha$ is unital. Our proof 
is very different from the arguments
used in \cite{GarHirSan_Rokhlin_2017},
and has the advantage of being applicable to global actions of groups
that are not necessarily abelian, thus obtaining new information even
in the global setting. 

In the next proof, for $\xi\in A\rtimes_\alpha G$ 
we write 
$\xi(g)\in A_g$, for 
$g\in G$, for the coefficient of $u_g$ in $\xi$, so that 
$\xi=\sum_{g\in G}\xi(g)u_g$.

\begin{thm}\label{thm:MoritaEqFPtCP}
Let $\alpha$ be a unital partial action of a finite 
group $G$ on a $C^*$-algebra $A$ with $\dimRok(\alpha)<\infty$. 
Then $A^\alpha$ is Morita equivalent to $A\rtimes_\alpha G$.
\end{thm}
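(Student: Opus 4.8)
The plan is to exhibit an imprimitivity bimodule between $A\rtimes_\alpha G$ and $A^\alpha$. Write $B=A\rtimes_\alpha G$, and note that since $\alpha$ is unital one has $1_A\in A^\alpha$ and $A^\alpha=\{x\in A\colon \alpha_g(x1_{g^{-1}})=x1_g\text{ for all }g\in G\}$, so that $B$ and $A^\alpha$ are unital with the same unit $1_A$. In the \emph{global} case the mechanism is clean: the element $p=\tfrac1{|G|}\sum_{g\in G}u_g\in B$ is a projection with $pBp\cong B^\alpha$ canonically (via $c\mapsto cp$, $c\in B^\alpha$), the submodule $\overline{Bp}$ realizes a Morita equivalence $B^\alpha\sim_M \overline{BpB}$, and finite Rokhlin dimension forces $p$ to be full, so $\overline{BpB}=B$. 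The whole difficulty in the partial setting is that $\tfrac1{|G|}\sum_{g\in G}1_gu_g$ is \emph{not} idempotent — this is precisely the breakdown of averaging emphasized in the introduction — so $p$ must be replaced by objects manufactured from Rokhlin towers and the argument run in a limit.

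Concretely, I would fix a pair $(F,\ep)$ and, using \autoref{prop:RdimReductionUnital}, choose Rokhlin towers $f^{(j)}_g\in A_g$ satisfying the sharpened condition (1'). From these, build $v_{(F,\ep)}=\sum_{j=0}^{d}\sum_{g\in G}(f^{(j)}_g)^{1/2}u_g$ placed in the $j$-th diagonal entry of $M_{d+1}(B)$, and the twisting map $\Lambda_{(F,\ep)}\colon A\to M_{d+1}(B)$, $a\mapsto v_{(F,\ep)}\,(a\otimes 1)$. A computation with the crossed-product relations, using (1') to rewrite $\alpha_{g^{-1}}(f^{(j)}_g)=f_1^{(j)}1_{g^{-1}}$ and using (2) and (3) to absorb the cross terms, shows that $\Lambda_{(F,\ep)}(a)^*\Lambda_{(F,\ep)}(b)$ is, up to an error controlled by $\ep$, of the form $\Phi_{(F,\ep)}(a^*b)$ for a completely positive map $\Phi_{(F,\ep)}\colon A\to A^\alpha$ that is approximately a conditional expectation, while $\Lambda_{(F,\ep)}(a)\Lambda_{(F,\ep)}(b)^*$ lies within $\ep$ of the hereditary subalgebra generated by $v_{(F,\ep)}v_{(F,\ep)}^*$. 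Passing to a limit of the $\Lambda_{(F,\ep)}$ along a free ultrafilter (or by a diagonal sequence argument, as is standard for Rokhlin-type properties) produces a genuine right Hilbert $A^\alpha$-module $X$ — and, as a byproduct, an honest conditional expectation $A\to A^\alpha$ — together with a $*$-homomorphism $\pi\colon B\to\mathcal{L}(X)$ such that $\langle X,X\rangle_{A^\alpha}$ is dense in $A^\alpha$ (take $b=1_A$) and $\mathcal{K}(X)=\pi(J)$ for a closed two-sided ideal $J\trianglelefteq B$.

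The essential remaining point is that $J=B$. This is exactly where $\dimRok(\alpha)<\infty$ is indispensable: it already fails for the trivial \emph{global} action, whose Rokhlin dimension is infinite. Here one argues directly with the Rokhlin relations: for each $(F,\ep)$ the elements $f^{(j)}_g$ can be recovered, up to $\ep$, from $v_{(F,\ep)}$, $v_{(F,\ep)}^*$ and elements of $A$ (the partial analogue of the global identities $vv^*=|G|\,p$ and $v^*v=|G|\,e_1$, which give $e_1\in \overline{B(vv^*)B}$ and hence $1_B=\sum_g u_ge_1u_g^*\in \overline{BpB}$). Consequently $\sum_{j}\sum_{g}f^{(j)}_g$, and therefore $1_A$, lies within $\ep$ of $J$; since $J$ is closed and this holds for all $(F,\ep)$, we get $1_A\in J$, i.e. $J=B$. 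Combining the last two paragraphs, $X$ is an $A\rtimes_\alpha G$--$A^\alpha$ imprimitivity bimodule and the theorem follows.

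The main obstacle, and the reason this is genuinely harder than the global case, is bookkeeping the domains: every ``$u_g$'' must be accompanied by the central unit $1_g$ of $A_g$, the naive averaging projection is not idempotent, and the computations of $\Lambda(a)^*\Lambda(b)$, of $\pi(B)=\mathcal{K}(X)$, and of the fullness of $J$ each generate error terms of the shape ``$\alpha_g$ of something, times $1_h$, minus $1_{h'}$ times the same'' that must be absorbed using conditions (1)--(4) of \autoref{df:Rdim}; controlling all of these errors simultaneously so that the \emph{limit} bimodule is honest (not merely approximate) is the technical heart. Finally, since the construction never uses commutativity of $G$, it applies verbatim to global actions, and thus reproves $B^\beta\sim_M B\rtimes_\beta G$ for every finite group $G$ and every global action of finite Rokhlin dimension.
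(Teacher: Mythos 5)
Your outline is modeled on the global averaging-projection argument, but its central objects are never shown to exist, and the key formula they rest on is not what the crossed-product relations actually give. With towers satisfying (1') of \autoref{prop:RdimReductionUnital}, set $w_j=\sum_{g\in G}(f_g^{(j)})^{1/2}u_g$; using $a_gu_g\,b_hu_h=\alpha_g(\alpha_{g^{-1}}(a_g)b_h)u_{gh}$ and $\|x^{1/2}y^{1/2}\|\leq\|xy\|^{1/2}$ to kill the cross terms, one finds $w_j^*w_j\approx\sum_{g\in G}\alpha_{g^{-1}}(f_g^{(j)})=f_1^{(j)}x_\alpha$, where $x_\alpha=\sum_{g\in G}1_g$. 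Hence (however you arrange the matrix bookkeeping) $\Lambda_{(F,\varepsilon)}(a)^*\Lambda_{(F,\varepsilon)}(b)$ is approximately $a^*f_1^{(j)}x_\alpha b$ in each entry: these values are nowhere near $A^\alpha$ (the tower element $f_1^{(j)}$ is not even approximately fixed), and $\sum_j f_1^{(j)}x_\alpha$ is far from $1$, so there is no completely positive $\Phi$ into $A^\alpha$, no approximate conditional expectation, and no right Hilbert $A^\alpha$-module at any finite stage. The later steps inherit the problem: a ``limit along a free ultrafilter'' of inner products built from mutually incompatible tower choices naturally lives in a sequence algebra or ultrapower of $A^\alpha$ and of $A\rtimes_\alpha G$, and descending to an honest bimodule over the original algebras is exactly the content that is missing; the fullness argument for $J$ compares finite-stage data with an ideal of the limit object without any stated compatibility; and even granting $X$, $\pi$ and $\pi(J)=\mathcal{K}(X)$ with $J=A\rtimes_\alpha G$, you would only conclude $A^\alpha\sim_M (A\rtimes_\alpha G)/\ker\pi$ unless you also prove $\pi$ injective, which you never address.

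For contrast, the paper avoids all approximation in the construction itself: it equips $A$ with an exact $A^\alpha$--$A\rtimes_\alpha G$ imprimitivity bimodule structure that involves no Rokhlin towers, namely ${}_{A^\alpha}\langle x,y\rangle=\sum_{g\in G}\alpha_g(xy^*1_{g^{-1}})$ and $\langle x,y\rangle_{A\rtimes_\alpha G}=\sum_{g\in G}x^*\alpha_g(y1_{g^{-1}})u_g$, with left fullness witnessed by the central invertible element $x_\alpha\in A^\alpha$. Finite Rokhlin dimension enters only where your instinct placed it, fullness on the crossed-product side: $xu_t$ is approximated by $\sum_{g,j}\langle (f_{tg}^{(j)})^{1/2}x^*,(f_g^{(j)})^{1/2}\rangle_{A\rtimes_\alpha G}$ using (1')--(3). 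So the correct repair of your plan is to replace the limit construction by such an exact bimodule; as written, the proposal is an analogy with the global case rather than a proof.
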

\begin{proof} 
Set $x_\alpha=\sum_{g\in G}1_g$. Then $x_\alpha$ is central, 
positive, and since $x_\alpha\geq 1$, it is invertible. 
Moreover, $x_\alpha$ belongs to $A^\alpha$, since for $h\in G$
we have
\[\alpha_h(x_\alpha 1_{h^{-1}})=\sum_{g\in G}\alpha_h(1_g1_{h^{-1}})=
\sum_{g\in G} 1_{hg}1_h=x_\alpha 1_h,
\]
where at the second step we used \autoref{rem:UnitsIdeals}. 
We define a Hilbert 
$A^\alpha$-$A\rtimes_{\alpha} G$-bimodule structure on
$A$ as follows. For $a\in A^\alpha$, $x\in A$ and $\xi\in A\rtimes_{\alpha}G$, set
 $a\cdot x=ax$ (the product is taken in $A$) and 
$x\cdot \xi= \sum\limits_{g\in G} \alpha_{g^{-1}}(x\xi(g))$.
The inner products are given by
\[_{A^\alpha}\langle{x,y}\rangle = \sum\limits_{g\in G}\alpha_g(xy^*1_{g^{-1}}) \ \  \mbox{ and } \ \
\langle{x,y}\rangle_{A\rtimes_{\alpha}G}=\sum\limits_{g\in G} x^*\alpha_g(y1_{g^{-1}})u_g.\]
for $a\in A^\alpha$, $x,y\in A$ and $\xi\in A\rtimes_\alpha G$.
(One readily checks that $_{A^\alpha}\langle{x,y}\rangle$
belongs to $A^\alpha$.) The properties of the inner products
are easily verified. For example, 
for $x\in A$ we have
\[x_\alpha\sum\limits_{g\in G}x^*\alpha_g(x1_{g^{-1}})u_g=\bigg(\sum\limits_{g\in G}\alpha_{g}(x1_{g^{-1}})u_g\bigg)^*\bigg(\sum\limits_{g\in G}\alpha_{g}(x1_{g^{-1}})u_g\bigg)\geq 0,
\]
so $\langle{x,x}\rangle_{A\rtimes_{\alpha}G}=\sum\limits_{g\in G}x^*\alpha_g(x1_{g^{-1}})u_g\geq 0$, with equality exactly when $x=0$.
One also easily checks the properties of a bimodule.
For example, given $\xi\in A\rtimes_\alpha G$
and $x,y\in A$, we have 
\begin{align*}\langle{x,y}\rangle_{A\rtimes_{\alpha}G}\cdot \xi&=\sum\limits_{g\in G} x^*\alpha_g(y1_{g^{-1}})u_g\sum\limits_{h\in G}\xi(h)u_h\\
&=\sum\limits_{g,h\in G}\alpha_g(\alpha_{g^{-1}}(x^*\alpha_g(y1_{g^{-1}}))\xi(h))u_{gh}\\
&= \sum\limits_{g,h\in G}\alpha_g(\alpha_{g^{-1}}(x^*1_g)y\xi(h))u_{gh}\\
&=\sum\limits_{g,h\in G}x^*\alpha_g(y\xi(h)1_{g^{-1}})u_{gh}\\
&=\sum\limits_{g,h\in G}x^*\alpha_{gh^{-1}}(y\xi(h)1_{hg^{-1}})u_{g}\\
&= \sum\limits_{g,h \in G} x^*\alpha_g(\alpha_{h^{-1}}(y\xi(h))1_{g^{-1}})u_g\\
&= \langle{x,y\cdot \xi}\rangle_{A\rtimes_{\alpha}G}.\end{align*}
The other properties are shown similarly.
Moreover, for $x\in A^\alpha$, we have
\[_{A^\alpha}\langle{x,x_\alpha^{-1}}\rangle=\sum\limits_{h\in G}\alpha_h(xx_\alpha^{-1}1_{h^{-1}})=\sum\limits_{h\in G}xx_\alpha^{-1}1_h=x.\]
In particular, $A$ is full left $A^\alpha$-module.
Finally, we claim that 
$A$ is a full right $A\rtimes_{\alpha}G$-module. For this,
it suffices to show that given $t\in G$, $x\in A_t$
and $\ep>0$, there exist $k\in\N$ and $a_1,b_1,\ldots, a_k, b_k\in A$
such that $xu_t\approx_\ep \sum\limits_{\ell=1}^{k}\langle{a_\ell,b_\ell}\rangle_{A\rtimes_{\alpha}G}$.
Set $d=\dimRok(\alpha)$ and $\ep_0=\ep/2(d+1)|G|$, and use \autoref{prop:RdimReductionUnital}
to find $f_g^{(j)}\in A_g$, for $g\in G$
and $j=0,\ldots,d$, satisfying
\be
\item[(a)] $\alpha_g(f_h^{(j)}1_{g^{-1}})=f_{gh}^{(j)}1_g$, for all $g,h\in G$;
\item[(b)] $f_g^{(j)}f_h^{(j)}\approx_{\ep_0} 0$ for all $j=0,\ldots,d$ and all distinct $g,h\in G$;
\item[(c)] $\sum_{g\in G}\sum_{j=0}^d f_g^{(j)}\approx_{\ep_0}1$.
\ee
Using at the last step that 
$x\in A_t$, we get
\begin{align*}
\sum_{g\in G}\sum_{j=0}^d\langle{{f_{tg}^{(j)}}^{\frac{1}{2}}x^*,{f_g^{(j)}}^{\frac{1}{2}}}\rangle_{A\rtimes_{\alpha}G}
&=\sum_{g,h\in G}\sum_{j=0}^d x{f_{tg}^{(j)}}^{\frac{1}{2}}\alpha_h({f_g^{(j)}}^{\frac{1}{2}}1_{h^{-1}})u_h\\
&\stackrel{\mathrm{(a)}}{=}\sum_{g,h\in G}\sum_{j=0}^d x{f_{tg}^{(j)}}^{\frac{1}{2}}{f_{hg}^{(j)}}^{\frac{1}{2}}1_hu_h\\
&\stackrel{\mathrm{(b)}}{\approx}_{\! \frac{\ep}{2}}\sum_{g\in G}\sum_{j=0}^d xf_{tg}^{(j)}1_tu_t\\
&\stackrel{\mathrm{(c)}}{\approx}_{\ep_0} x1_tu_t=xu_t,
\end{align*}
as desired. We conclude that $A$ is an $A^\alpha$-$A\rtimes_{\alpha}G$-imprimitivity bimodule, so these $C^*$-algebras are Morita equivalent,
and the proof is complete.
\end{proof}


\begin{thm}\label{thm:CPRdimUnital}
Let $G$ be a finite group, and let $d\in \N$. 
Let \textbf{P} be a property for \ca s which is preserved by:
\bi\item[(S)] passage to direct sums and summands; 
\item[(M)] Morita equivalence;
\item[(C)] crossed products by \emph{global} actions
of $G$ with $\dimRok\leq d$. 
\ei
Let $A$ be a unital \ca, and let 
$\alpha$ be a unital partial action of $G$ on $A$. If 
$\dimRok(\alpha)\leq d$ and $A$ satisfies
\textbf{P}, then so do $A\rtimes_\alpha G$ and $A^\alpha$. A similar statement
is true if $\dimRok$ is replaced everywhere by $\cdimRok$. 
In particular,
\be

\item[(1)] If $\cdimRok(\alpha)<\I$ and $A$ satisfies the UCT, then
so do $A\rtimes_\alpha G$ and $A^\alpha$.
\item[(2)] If $\cdimRok(\alpha)<\I$ and $K_\ast(A)$ is either trivial, free, torsion-free, or finitely-generated, then the same holds for $A\rtimes_\alpha G$ and $A^\alpha$. 
\ee

When $\dimRok(\alpha)=0$, 
it follows that the properties listed in
the main theorem of \cite{Gar_crossed_2017} pass from $A$ 
to $A^\alpha$ and $A\rtimes_\alpha G$. This includes
having real rank zero, having stable rank one, 
being an AF/AI/AT-algebra, being purely infinite, 
the order on projections being determined by traces, being
weakly semiprojective, and having $K$-groups which are either
trivial, free, torsion-free, or finitely-generated.
\end{thm}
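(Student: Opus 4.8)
The plan is to run the argument of \autoref{thm:CPRdim}, but, exploiting unitality, to replace the iterated-extension decomposition of \autoref{thm: FinGrpIntersProp} by a genuine $G$-invariant \emph{direct-sum} decomposition of $\alpha$ into decomposable partial actions; this is what lets us weaken axiom (E) to (S). The key structural step is as follows. Since $A$ and $\alpha$ are unital, each $A_g$ has a unit $1_g$, which—being the unit of an ideal of the unital algebra $A$—is a central projection of $A$, and $1_1=1_A$. Let $\mathcal{B}$ be the finite Boolean algebra of central projections of $A$ generated by $\{1_g\colon g\in G\}$; for $S\subseteq G$ with $1\in S$ put $p_S=\prod_{g\in S}1_g\cdot\prod_{g\in G\setminus S}(1_A-1_g)$, so that $\{p_S\colon p_S\neq 0\}$ is the set of atoms of $\mathcal{B}$, and set $\mathcal{S}=\{S\subseteq G\colon 1\in S,\ p_S\neq 0\}$, giving $1_A=\sum_{S\in\mathcal{S}}p_S$. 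A direct computation with \autoref{rem:UnitsIdeals}, using that $\alpha_g$ restricts to a $^*$-isomorphism $A_{g^{-1}}\to A_g$, shows that whenever $g\in G$ and $S\in\mathcal{S}$ satisfy $g^{-1}\in S$ one has $\alpha_g(p_S)=p_{gS}$; in particular $gS\in\mathcal{S}$. Hence $\alpha$ permutes the atoms of $\mathcal{B}$ exactly according to the partial action $\texttt{Lt}$ of $G$ on the subsets of $G$ (cf.\ \autoref{df:TnG}). Decomposing $\mathcal{S}$ into $\texttt{Lt}$-orbits $O$, and using $|gS|=|S|$, each $O$ sits inside $\mathcal{T}_{n(O)}(G)$ for a fixed integer $n(O)$, and setting $A_O=\bigoplus_{S\in O}p_SA$ produces a finite $G$-invariant decomposition $A=\bigoplus_O A_O$ with $A\rtimes_\alpha G=\bigoplus_O(A_O\rtimes_{\alpha|_{A_O}}G)$; a routine verification of \autoref{df:nIntProp} shows that each $\alpha|_{A_O}$ has the $n(O)$-decomposition property, with $(A_O)_\tau=p_\tau A$ for $\tau\in O$ and $\{0\}$ otherwise, and that $A_O=(A_O)_{G\cdot\tau}$ for every $\tau\in O$, so that a single orbit is involved.

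With this decomposition the argument mimics the proof of \autoref{thm:CPRdim}. Each $A_O$ is a direct summand of $A$, hence satisfies \textbf{P} by (S), and $\dimRok(\alpha|_{A_O})\leq\dimRok(\alpha)\leq d$ by \autoref{prop:PermProp}. Fix an orbit $O$ and $\tau\in O$. By \autoref{thm:dimRokResH} applied to the decomposable system $\alpha|_{A_O}$, its restriction to $H_\tau$ is a \emph{global} action of $H_\tau$ on $(A_O)_\tau=p_\tau A$ with Rokhlin dimension equal to $\dimRok(\alpha|_{A_O})\leq d$, and $p_\tau A$, a direct summand of $A$, satisfies \textbf{P}. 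By Theorem~C in~\cite{AbaGarGef_decomposable_2020}, $A_O\rtimes_{\alpha|_{A_O}}G$ is isomorphic to a matrix algebra over $p_\tau A\rtimes H_\tau$; so applying (C) (for the global $H_\tau$-action, exactly as in \autoref{thm:CPRdim}), then (M), shows that $A_O\rtimes_{\alpha|_{A_O}}G$ satisfies \textbf{P}, and a final application of (S) gives that $A\rtimes_\alpha G$ satisfies \textbf{P}. Replacing $\dimRok$ by $\cdimRok$ throughout gives the commuting-towers statement, since \autoref{prop:PermProp} and \autoref{thm:dimRokResH} hold equally for $\cdimRok$. For the fixed point algebra nothing more is needed: $\dimRok(\alpha)<\I$ (and $\cdimRok(\alpha)<\I$ implies $\dimRok(\alpha)<\I$), so \autoref{thm:MoritaEqFPtCP} gives $A^\alpha\sim_M A\rtimes_\alpha G$, and \textbf{P} passes to $A^\alpha$ by (M).

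Finally one instantiates \textbf{P}. Satisfaction of the UCT is preserved by direct sums and summands, is Morita invariant, and—by~\cite{GarHirSan_Rokhlin_2017}, where commuting towers are essential—is preserved by crossed products by global finite-group actions of finite $\cdimRok$; so (1) follows from the $\cdimRok$-version of the theorem, applied with $d$ any bound for $\cdimRok(\alpha)$. The $K$-theoretic conditions in (2) are similarly stable under (S), (M), and—by~\cite{GarHirSan_Rokhlin_2017,GarLup_applications_2018}—under (C) in the commuting-towers setting, giving (2). If $\dimRok(\alpha)=0$ we may take $d=0$: each property appearing in the main theorem of~\cite{Gar_crossed_2017} is, by that very theorem, preserved by crossed products by global finite-group actions with the Rokhlin property, hence satisfies (C) for $d=0$, and each is readily seen to pass to direct sums and summands and to be Morita invariant; the theorem then yields that each such property passes from $A$ to $A^\alpha$ and $A\rtimes_\alpha G$.

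The main obstacle is the structural first step: realizing a unital partial action, as a $G$-system, as a finite direct sum of decomposable ones, where the crucial input is the identity $\alpha_g(p_S)=p_{gS}$ resting on \autoref{rem:UnitsIdeals}. One should also note, as already in \autoref{thm:CPRdim}, that (C) is being invoked for global actions of the subgroups $H_\tau\leq G$ rather than of $G$ itself, which is harmless for every property to which we apply the theorem.
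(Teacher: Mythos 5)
Your argument is correct, but it takes a genuinely different route from the paper's. The paper also first reduces to $A\rtimes_\alpha G$ via \autoref{thm:MoritaEqFPtCP} and (M), but then passes to the globalization $(B,\beta)$: since $A$ is a unital ideal in $B$ and $B=\sum_{g}\beta_g(A)$, the algebra $B$ is isomorphic to a finite direct sum $p_1A\oplus\cdots\oplus p_{|G|}A$ of corners of $A$, hence satisfies \textbf{P} by (S); then $\dimRok(\beta)=\dimRok(\alpha)\leq d$ by \autoref{thm:RdimGlobaliz}, condition (C) is applied to the genuine $G$-action $\beta$, and one concludes with the Morita equivalence $B\rtimes_\beta G\sim_M A\rtimes_\alpha G$ (Theorem~4.18 in \cite{AbaGarGef_decomposable_2020}) and (M). You instead decompose $A$ itself: your computation $\alpha_g(p_S)=p_{gS}$ (for $g^{-1}\in S$, via \autoref{rem:UnitsIdeals}) is correct, the atoms are permuted according to $\texttt{Lt}$, and the resulting $G$-invariant direct sum of single-orbit decomposable systems lets you rerun the proof of \autoref{thm:CPRdim} using \autoref{prop:PermProp}, \autoref{thm:dimRokResH} and Theorem~C of \cite{AbaGarGef_decomposable_2020}. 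What your route buys is that no globalization machinery is needed (neither the existence of the enveloping action nor \autoref{thm:RdimGlobaliz}); in effect your decomposition is a hands-on substitute for it. The price, which you rightly flag, is that (C) gets invoked for global actions of the stabilizer subgroups $H_\tau\leq G$, which can be proper (e.g.\ $H_\tau\cong\Z_2$ inside $G=\Z_4$), so formally you use a slightly stronger hypothesis than the literal (C) of the statement, whereas the globalization proof only ever applies (C) to an action of $G$ itself. Since the paper already takes the same license in its proof of \autoref{thm:CPRdim}, and every property appearing in (1), (2) and the $\dimRok(\alpha)=0$ list is preserved by crossed products of arbitrary finite groups with the relevant Rokhlin-type hypothesis, this does not affect any of the stated applications.
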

\begin{proof}
By \autoref{thm:MoritaEqFPtCP} and condition (M), 
it suffices to show the result for $A\rtimes_\alpha G$.
Let $(B,\beta)$ denote the globalization of $(A,\alpha)$. 

\textbf{Claim:} \emph{there 
exist central projections $p_1,\ldots, p_{|G|}\in A$ such that
$B\cong p_1A\oplus\cdots\oplus p_{|G|}A$.}
Since $A$ is a unital ideal in $B$ and $B=\sum_{g\in G}\beta_g(A)$,
it suffices to show the following: if $I,J$ are unital ideals in a 
\ca, then $I+J\cong pI\oplus J$ for some central projection
$p\in I$. This follows by taking $p=1_I-1_I1_J$ and letting
$\psi\colon I+J\to pI\oplus J$ be $\psi(x)=(px,1_Jx)$
for all $x\in I+J$. We omit the easy details.

Assume that $A$ satisfies \textbf{P} and that $\dimRok(\alpha)\leq d$.
By the claim above and (S), it follows that $B$ satisfies \textbf{P}.
By \autoref{thm:RdimGlobaliz}, we have $\dimRok(\beta)=\dimRok(\alpha)\leq d$, and hence by (C) the global crossed product 
$B\rtimes_\beta G$ satisfies \textbf{P}. Finally, since
$B\rtimes_\beta G\sim_M A\rtimes_\alpha G$ by
Theorem~4.18 in~\cite{AbaGarGef_decomposable_2020}, the result follows from (M).

For the properties listed in (1) and (2), the preservation conditions
(S), (M) are well-known, while (C) is part of
Theorem~3.20 in~\cite{GarHirSan_Rokhlin_2017}. The properties 
mentioned in the last part of the statement are also known to 
satisfy (S) and (M), and for $d=0$ they also satisfy (C) by the 
main result in~\cite{Gar_crossed_2017}.
\end{proof}

\begin{rem}
In the context of the above theorem, one can obtain bounds
for $\dimnuc$, $\dr$, $\mathrm{sr}$ and $\mathrm{RR}$ of 
$A\rtimes_\alpha G$ and $A^\alpha$ 
that are much better than the ones in
\autoref{thm:CPRdim}, particularly since they do not depend
on the cardinality of $G$. For example, one gets
$\dimnuc(A\rtimes_\alpha G)\leq (\dimRok(\alpha)+1)(\dimnuc(A)+1)-1$.
\end{rem}

\section{Topological partial actions}\label{sec: ParDynSys}

In this section, we study the Rokhlin dimension of 
topological partial actions. In \autoref{thm:FreeFiniteRdim}, 
we will show that a topological partial action 
$G\curvearrowright X$ on a finite-dimensional space $X$
has finite Rokhlin dimension \emph{if and only if} it is free. 
The case of global actions is implicit 
in~\cite{HirPhi_rokhlin_2015}, 
and it is an easy consequence of the 
existence of local cross-sections for the quotient map 
$\pi\colon X\to X/G$.
However, the proof in the partial setting is considerably
more complicated, since even for free partial actions there
may not exist local cross-sections for $\pi$. The proof 
in our context is
quite involved and will occupy the entire section. 

\begin{df}
Let $X$ be a topological space. A topological partial action of a discrete group $G$ on $X$ is given by a pair $((X_g)_{g\in G}, (\theta_g)_{g\in G})$, consisting of open subsets $X_g\subseteq X$ and homeomorphisms $\theta_g\colon X_{g^{-1}}\to X_g$, satisfying
\be
\item $X_1=X$ and $\theta_1=\id_X$, and
\item $\theta_g\circ\theta_h\subseteq \theta_{gh}$, for all $g,h\in G$.
\ee 

We say that $\theta$ is \emph{free} if $\theta_{g}(x)\neq x$ for all $g\in G\setminus\{1_G\}$ and all $x\in X_{g^{-1}}$.
\end{df}

Next, we show that finite Rokhlin dimension implies freeness in the
above setting.

\begin{prop}\label{prop:RdimFreeness}
Let $X$ be a locally compact Hausdorff space,
let $G$ be a finite group, let $\theta$
be a topological partial action of $G$ on $X$, 
and denote by $\alpha$ the induced
partial action of $G$ on $C_0(X)$. 
If $\dimRok(\alpha)<\infty$, then $\theta$ is free.
\end{prop}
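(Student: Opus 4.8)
The plan is to argue by contraposition. Suppose $\theta$ is not free, so there are $g\in G\setminus\{1\}$ and $x_0\in X_{g^{-1}}$ with $\theta_g(x_0)=x_0$; I will show this contradicts $\dimRok(\alpha)<\infty$. The one observation that makes everything work is that, since the range of $\theta_g$ is $X_g$, we have $x_0\in X_g\cap X_{g^{-1}}$ and $\theta_{g^{-1}}(x_0)=\theta_g^{-1}(x_0)=x_0$; hence evaluation at $x_0$ turns $\alpha_g$ into the identity, i.e. $\alpha_g(\varphi)(x_0)=\varphi\bigl(\theta_{g^{-1}}(x_0)\bigr)=\varphi(x_0)$ for every $\varphi\in C_0(X_{g^{-1}})$.

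First I would use Urysohn's lemma (here local compactness of $X$ enters) to choose $a\in C_0(X)$ and $b\in C_0(X_{g^{-1}})$ with $0\le a,b\le1$ and $a(x_0)=b(x_0)=1$; the point of taking $b$ supported in the open set $X_{g^{-1}}$ is that it is then an admissible multiplicative witness for condition~(1) of \autoref{df:Rdim}. Put $F=\{a,b\}$, set $d=\dimRok(\alpha)<\infty$, fix a small $\ep>0$ to be pinned down at the end, and let $f_h^{(j)}\in C_0(X_h)$, for $h\in G$ and $j=0,\dots,d$, be Rokhlin towers for $(F,\ep)$.

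Next I would evaluate conditions (1)--(3) of \autoref{df:Rdim} at the point $x_0$, using $a(x_0)=1$ throughout together with the key observation. Condition~(1), applied with our fixed $g$, an arbitrary $h\in G$, $x=b$, and the witness $a$, gives (since $b(x_0)=\alpha_g(b)(x_0)=1$)
\[
\bigl|f_h^{(j)}(x_0)-f_{gh}^{(j)}(x_0)\bigr|<\ep\qquad\text{for all }h\in G,\ j=0,\dots,d,
\]
where $f_{gh}^{(j)}$ is read as $0$ at $x_0$ if $x_0\notin X_{gh}$. Condition~(2), applied to the pair $h$ and $gh$ (which are distinct because $g\neq1$), gives $f_h^{(j)}(x_0)f_{gh}^{(j)}(x_0)<\ep$. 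Combining these two inequalities and using $0\le f_h^{(j)}\le1$,
\[
f_h^{(j)}(x_0)^2\le f_h^{(j)}(x_0)f_{gh}^{(j)}(x_0)+f_h^{(j)}(x_0)\bigl|f_h^{(j)}(x_0)-f_{gh}^{(j)}(x_0)\bigr|<2\ep,
\]
so $f_h^{(j)}(x_0)<\sqrt{2\ep}$ for every $h\in G$ and every $j$. On the other hand, condition~(3) evaluated at $x_0$ yields $\sum_{j=0}^{d}\sum_{h\in G}f_h^{(j)}(x_0)>1-\ep$, while the bound just obtained gives $\sum_{j=0}^{d}\sum_{h\in G}f_h^{(j)}(x_0)\le(d+1)\,|G|\,\sqrt{2\ep}$. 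Choosing $\ep$ so small that $\ep\le\tfrac12$ and $8(d+1)^2|G|^2\ep<1$ makes these two estimates incompatible, which is the required contradiction and finishes the proof.

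I do not expect a serious obstacle here: the whole argument is a point-evaluation at a fixed point, and the only real idea is that conditions~(1) and~(2) of \autoref{df:Rdim}, read there, force all the Rokhlin functions to be uniformly small at $x_0$, in conflict with the approximate-partition-of-unity condition~(3). The only place that needs a little care is domain bookkeeping: the witness $b$ must live in $A_{g^{-1}}=C_0(X_{g^{-1}})$ for condition~(1) to be applicable, and one should remember that each $f_{gh}^{(j)}$ is extended by zero off $X_{gh}$ so that $f_{gh}^{(j)}(x_0)$ always makes sense.
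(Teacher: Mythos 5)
Your argument is correct and is essentially the paper's own proof: both evaluate the Rokhlin towers at a fixed point $x_0$ of $\theta_g$, use conditions (1) and (2) of \autoref{df:Rdim} (with test functions peaking at $x_0$, one supported in $X_{g^{-1}}$) to force $f_h^{(j)}(x_0)<\sqrt{2\ep}$ for all $h,j$, and contradict the approximate partition of unity in condition (3). The only difference is the cosmetic choice of the finite set $F$ (the paper takes $\{a,\alpha_g(a)\}$ with $a\in C_0(X_{g^{-1}})$), and your explicit handling of the multiplicative witness is, if anything, slightly more careful.
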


\begin{proof}
Set $d=\dimRok(\alpha)$, and assume that $d<\I$. Let
$g\in G\setminus \{1\}$ be given. Arguing by contradiction, assume that $F_g=\{x\in X_{g^{-1}}\colon \theta_{g}(x)=x\}$
is not empty, and fix $x\in F_g$. Note that $x$ belongs to $X_g\cap X_{g^{-1}}$. Let $a\in C_0(X_{g^{-1}})$ be a positive contraction
satisfying $a(x)=1$. Choose $\ep>0$ with $\ep<\frac{1}{2(|G|(d+1)+1)^2}$, and let $f_h^{(j)}\in C_0(X_h)$, for $h\in G$,
and $j=0,\ldots, d$ be Rokhlin towers for
$(\{a, \alpha_g(a)\},\ep)$.
Given $h\in G$ and $j=0,\ldots,d$, we have:
\begin{align*}
\left\|f_h^{(j)}\alpha_g(f_h^{(j)}a)\right\|_\I&\leq \|f_h^{(j)}\big(\underbrace{\alpha_g(f_h^{(j)}a)-f_{gh}^{(j)}\alpha_g(a)}_{\ \ \ \approx_\ep 0}\big)\|_\I+ \|\underbrace{f_h^{(j)}f_{gh}^{(j)}}_{ \ \ \ \approx_\ep 0}\alpha_g(a)\|_\I<2\ep.
\end{align*}
Evaluating at $x$, we get
\[(f_h^{(j)}\alpha_g(f_h^{(j)}a))(x) = f_h^{(j)}(x)f_h^{(j)}(\theta_{g^{-1}}(x))a(\theta_{g^{-1}}(x))<2\ep.\]
Since $\theta_{g^{-1}}(x)=x$ and $a(x)=1$, we deduce that
\[\label{eqn:5.1}\tag{5.1}f_h^{(j)}(x)<\sqrt{2\ep},\] 
for all $h\in G$ and $j=0,\ldots,d$.
Moreover, condition~(3) from~\autoref{df:Rdim} gives
\[
\left|\sum\limits_{j=0}^{d}\sum\limits_{h\in G}f_h^{(j)}(x)a(x)-a(x)\right|\stackrel{a(x)=1}{=}\left|\sum\limits_{j=0}^{d}\sum\limits_{h\in G}f_h^{(j)}(x)-1\right|<\ep,\]
so $\sum\limits_{j=0}^{d}\sum\limits_{h\in G}f_h^{(j)}(x)>1-\ep$.
Using this at the second step, we get
\[|G|(d+1)\sqrt{2\ep}\stackrel{(\ref{eqn:5.1})}{>}\sum\limits_{j=0}^{d}\sum\limits_{g\in G}f_g^{(j)}(x)
>1-\ep\geq 1-\sqrt{2\ep}.\]
This contradicts the choice of $\ep$, showing that $F_g$ is empty. Thus $\theta$ is free.
\end{proof}

The rest of the section
will be devoted to prove that free partial actions have finite
Rokhlin dimension. The general strategy is as follows:
\bi
\item[Step 1:] Show that free \emph{decomposable} partial actions have 
finite Rokhlin dimension.
\item[Step 2:] Show that an extension of topological partial actions with finite Rokhlin dimension again has finite Rokhlin dimension.
\item[Step 3:] Use the equivariant decomposition into 
successive extensions from \autoref{thm: FinGrpIntersProp}
to conclude that the given action has finite Rokhlin dimension.
\ei 

Using the results we proved in Section~4, we can
easily establish the first step.

\begin{prop}\label{prop:FreeRokDimDecomp}
Let $G$ be a finite group, let $n\in \{1,\ldots,|G|\}$, 
let $X$ be a locally compact space with $\dim(X)<\I$, 
and let 
$\sigma$ be a partial action of $G$ on $X$ with the
$n$-decomposition property (see \autoref{df:nIntProp}). Denote by
$\alpha$ the induced partial action of $G$ on $C_0(X)$.
If $\sigma$ is free, then $\dimRok(\alpha)\leq \dim(X)$.
\end{prop}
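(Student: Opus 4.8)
The plan is to reduce the statement to the known bound for \emph{free global} actions of finite groups on finite-dimensional spaces. Since $\sigma$ has the $n$-decomposition property (\autoref{df:nIntProp}), \autoref{thm:dimRokResH} applies to the induced partial action $\alpha$ on $C_0(X)$ and gives
\[\dimRok(\alpha)=\max_{\tau\in\mathcal{T}_n(G)}\dimRok(\alpha|_{H_\tau}),\]
so it is enough to prove $\dimRok(\alpha|_{H_\tau})\leq\dim(X)$ for a fixed $\tau\in\mathcal{T}_n(G)$. By part~(1) of \autoref{prop:EquivDecomp}, the restriction $\alpha|_{H_\tau}$ acts on $A_\tau=C_0(X_\tau)$ as a \emph{global} action of the finite group $H_\tau$, where $X_\tau=\bigcap_{g\in\tau}X_g$ is an open subset of $X$; this global action is the one induced by a global topological action $\sigma^{(\tau)}$ of $H_\tau$ on $X_\tau$.

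Next I would verify that $\sigma^{(\tau)}$ is free. For $h\in H_\tau$ we have $h=h\cdot 1\in h\tau=\tau$, so $H_\tau\subseteq\tau$, and in particular $h^{-1}\in\tau$; hence $X_\tau\subseteq X_{h^{-1}}$, which makes $\sigma^{(\tau)}_h=\sigma_h|_{X_\tau}$ a well-defined homeomorphism of $X_\tau$ onto $X_{h\tau}=X_\tau$. If some $h\in H_\tau\setminus\{1\}$ had a fixed point $x\in X_\tau$, then $x\in X_{h^{-1}}$ would be a fixed point of $\sigma_h$ with $h\neq 1$, contradicting freeness of $\sigma$. Thus $\sigma^{(\tau)}$ is a free action of the finite group $H_\tau$ on the locally compact Hausdorff space $X_\tau$, and $\dim(X_\tau)\leq\dim(X)<\I$ since $X_\tau$ is open in $X$.

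Finally I would invoke the result in the global case: a free action of a finite group on a space of finite covering dimension $\leq m$ has Rokhlin dimension at most $m$, which follows by lifting a partition of unity through local cross-sections of the quotient map $X_\tau\to X_\tau/H_\tau$; see \cite{HirPhi_rokhlin_2015,Gar_rokhlin_2017} and the discussion preceding \autoref{prop:RdimFreeness}. This yields $\dimRok(\alpha|_{H_\tau})\leq\dim(X_\tau)\leq\dim(X)$, and together with the displayed identity above we conclude $\dimRok(\alpha)\leq\dim(X)$. There is no serious obstacle beyond correctly identifying, through \autoref{thm:dimRokResH} and \autoref{prop:EquivDecomp}, that the relevant subsystems are genuinely global \emph{and} free; once that bookkeeping is done the estimate is immediate. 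The same argument handles $\cdimRok$, since both \autoref{thm:dimRokResH} and the global bound hold verbatim for commuting towers.
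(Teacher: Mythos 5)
Your proposal is correct and follows essentially the same route as the paper: reduce via \autoref{thm:dimRokResH} to the global subsystems $H_\tau\curvearrowright C_0(X_\tau)$, observe these are free with $\dim(X_\tau)\leq\dim(X)$, and apply the global bound of Proposition~2.11 in~\cite{HirPhi_rokhlin_2015}. The only (harmless) difference is that you verify freeness of the restricted $H_\tau$-action directly, whereas the paper cites part~(2) of Theorem~5.4 in~\cite{AbaGarGef_decomposable_2020}.
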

\begin{proof}
Fix 
$\tau\in \mathcal{T}_n(G)$. 
Then $X_\tau$ is an open subset of $X$ and thus $\dim(X_\tau)\leq 
\dim(X)$. By part~(2) of Theorem~5.4 in~\cite{AbaGarGef_decomposable_2020}, the restricted global action
of $H_\tau$ on $X_\tau$ is free. Using Proposition~2.11 in~\cite{HirPhi_rokhlin_2015} at the first step, it follows that 
\[\dimRok(\alpha|_{H_\tau})\leq \dim(X_\tau)\leq \dim(X).\]
Thus, the result follows from \autoref{thm:dimRokResH}.\end{proof}

Step 2 is considerably more complicated.
Roughly speaking, one needs to lift 
Rokhlin towers from the quotient to the algebra, 
while at the same time respecting the domains of the partial 
action. If we did not care about respecting the domains
(such as in the global case), 
the result would be an immediate consequence of the 
fact that the cone of $\C^n$ is projective. In our setting,
the greatest difficulty lies in showing that this can be 
done in a way compatible with the domains, and for this 
we will need some lemmas that allow us to assume that
the Rokhlin towers and their lifts are orthogonal; see 
\autoref{lma:CommAlgOrthogonalTowers} and \autoref{lma:liftingCn}. 
It is unclear whether
these results hold without assuming that the algebra is commutative.

First, we show that for partial actions on commutative \uca s, the elements of each Rokhlin tower can be 
assumed to be exactly orthogonal.  

\begin{lma}\label{lma:CommAlgOrthogonalTowers}
Let $\alpha$ be a partial action of a finite group $G$ on a unital, commutative $C^*$-algebra $A$, and let $d\in\N$.
Then $\dimRok(\alpha)\leq d$ if and only if for all $\ep>0$ and every finite subset $F\subseteq A$, there exist positive contractions 
$f_g^{(j)}\in A_g$, for $g\in G$ and $j=0,\ldots,d$, satisfying:
\be
\item $\left\| \alpha_g(f_h^{(j)}a)- f_{gh}^{(j)}\alpha_g(a)\right\|<\varepsilon$, for all $a\in F\cap A_{g^{-1}}$;
\item $f_g^{(j)}f_h^{(j)}=0$ for all $g,h\in G$ with $g\neq h$ and for all $j=0,\ldots,d$;
\item $\left\|\sum\limits_{g\in G}\sum\limits_{j=0}^{d}f_g^{(j)}-1\right\|<\ep$.
\ee 
\end{lma}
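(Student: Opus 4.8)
I would prove the two implications separately; one is routine and all the content is in the other.

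\textbf{The easy direction.} If for every $(F,\ep)$ there exist positive contractions $f_g^{(j)}\in A_g$ satisfying (1)--(3) of the lemma, then — after enlarging $F$ so that it consists of contractions — these same elements satisfy (1)--(4) of \autoref{df:Rdim} for $(F,\ep)$: condition (4) is vacuous since $A$ is commutative, condition (2) holds on the nose because $f_g^{(j)}f_h^{(j)}=0$, and the multiplicative witnesses in (1) and (3) of \autoref{df:Rdim} are absorbed because the witness $a$ is a contraction. Hence $\dimRok(\alpha)\leq d$.

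\textbf{The hard direction.} Assume $\dimRok(\alpha)\leq d$ and, by Gelfand duality, write $A=C(Y)$ with $Y$ compact Hausdorff, so each $A_g$ is the ideal of functions vanishing off an open set $Y_g\subseteq Y$. Fix $\ep>0$ and a finite $F\subseteq A$; without loss of generality $F$ consists of contractions and $1_A\in F$. Pick $\ep_0>0$ small (to be pinned down at the end in terms of $\ep$, $|G|$, $d$) and take Rokhlin towers $f_g^{(j)}\in A_g$ for $(F,\ep_0)$. Since $A$ is commutative and $1_A\in F$, using $1_A$ as the multiplicative witness turns the conditions of \autoref{df:Rdim} into $\|\alpha_g(f_h^{(j)}x)-f_{gh}^{(j)}\alpha_g(x)\|<\ep_0$ for $x\in F\cap A_{g^{-1}}$, $\|f_g^{(j)}f_h^{(j)}\|<\ep_0$ for $g\neq h$, and $\|\sum_{g,j}f_g^{(j)}-1\|<\ep_0$, with (4) automatic. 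Now round each tower by setting $\widetilde f_g^{(j)}=\big(f_g^{(j)}-\sum_{h\neq g}f_h^{(j)}\big)_+$. Since $f_g^{(j)}$ vanishes off $Y_g$ so does $\widetilde f_g^{(j)}$, and $0\leq\widetilde f_g^{(j)}\leq f_g^{(j)}\leq 1$, so $\widetilde f_g^{(j)}\in A_g$ is a positive contraction. Evaluating at a character $y$ of $A$: if $\widetilde f_g^{(j)}(y)>0$ and $\widetilde f_h^{(j)}(y)>0$ with $g\neq h$, then $f_g^{(j)}(y)>\sum_{k\neq g}f_k^{(j)}(y)\geq f_h^{(j)}(y)$ and symmetrically $f_h^{(j)}(y)>f_g^{(j)}(y)$, which is absurd; hence $\widetilde f_g^{(j)}\widetilde f_h^{(j)}=0$ for $g\neq h$, which is condition (2) of the lemma.

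The heart of the matter is the norm estimate $\|f_g^{(j)}-\widetilde f_g^{(j)}\|\leq|G|\sqrt{2\ep_0}$. Fix $j$ and a character $y$, put $M=\max_g f_g^{(j)}(y)$, attained at $g_0$ (the case $M=0$ is trivial). From $\|f_{g_0}^{(j)}f_h^{(j)}\|<\ep_0$ we get $f_h^{(j)}(y)<\ep_0/M$ for all $h\neq g_0$; moreover $0\leq f_g^{(j)}(y)-\widetilde f_g^{(j)}(y)\leq\sum_{h\neq g}f_h^{(j)}(y)$ for every $g$, so $\sum_g\big(f_g^{(j)}(y)-\widetilde f_g^{(j)}(y)\big)\leq\min\big(|G|M,\,2(|G|-1)\ep_0/M\big)\leq|G|\sqrt{2\ep_0}$ (by AM--GM in $M$), and hence each single term is bounded by the same quantity. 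Writing $\delta=|G|\sqrt{2\ep_0}$, condition (3) of the lemma follows from $\|\sum_{g,j}\widetilde f_g^{(j)}-1\|\leq(d+1)\delta+\ep_0$, and condition (1) from splitting $\alpha_g(\widetilde f_h^{(j)}a)=\alpha_g\big((\widetilde f_h^{(j)}-f_h^{(j)})a\big)+\alpha_g(f_h^{(j)}a)$ and combining $\|\widetilde f_h^{(j)}-f_h^{(j)}\|\leq\delta$, $\|\widetilde f_{gh}^{(j)}-f_{gh}^{(j)}\|\leq\delta$ with the original approximate equivariance, yielding $\|\alpha_g(\widetilde f_h^{(j)}a)-\widetilde f_{gh}^{(j)}\alpha_g(a)\|\leq2\delta+\ep_0$. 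Choosing $\ep_0$ so small that $\ep_0+(d+1)|G|\sqrt{2\ep_0}<\ep$ and $\ep_0+2|G|\sqrt{2\ep_0}<\ep$ completes the argument.

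The one genuine obstacle is this quantitative control: verifying that the rounding $f_g^{(j)}\mapsto\widetilde f_g^{(j)}$ displaces each element by only $O(\sqrt{\ep_0})$ in norm. It is exactly here that commutativity of $A$ is essential, since it lets one analyze $\big(f_g^{(j)}-\sum_{h\neq g}f_h^{(j)}\big)_+$ character by character; this is consistent with the discussion preceding \autoref{lma:CommAlgOrthogonalTowers} on why these arguments are not expected to survive in the noncommutative setting.
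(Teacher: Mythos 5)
Your argument is correct and follows essentially the same route as the paper's proof: both replace each tower element by $\big(f_g^{(j)}-\sum_{h\neq g}f_h^{(j)}\big)_+$, check exact orthogonality, and use commutativity to prove a pointwise perturbation estimate of order $\sqrt{\ep_0}$ before transferring conditions (1) and (3). The only differences are cosmetic (your max/AM--GM bound $|G|\sqrt{2\ep_0}$ versus the paper's two-case estimate $|G|\sqrt{\ep_0}$, and a character-by-character contradiction in place of the identity $(z-w)_+(w-z)_+=0$).
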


\begin{proof} 
It is clear that any action satisfying conditions (1), (2) and (3) in the statement has Rokhlin dimension at most $d$ (the approximate 
commutation condition from \autoref{df:Rdim} is vacuous since $A$ is commutative). 
We therefore prove the non-trivial implication. 
Let $\ep>0$ and let a finite subset $F\subseteq A$ be given. 
Without loss of generality, we assume that $F$ consists of positive contractions. 
	
Fix $\ep_0<1$ such that $(d+2)|G|^2\sqrt{\ep_0}<\ep$, and 
choose positive contractions $x_g^{(j)}\in A_g$, for $g\in G$ and $j=0,\ldots,d$, satisfying
\be
\item[(a)] $\left\| \alpha_g(x_h^{(j)}a)- x_{gh}^{(j)}\alpha_g(a)\right\|<\ep_0$, for all $a\in F\cap A_{g^{-1}}$;
\item[(b)] $\left\|x_g^{(j)}x_h^{(j)}\right\|<\ep_0$ for all $g,h\in G$ with $g\neq h$ and for all $j=0,\ldots,d$;
\item[(c)] $\left\|\sum\limits_{g\in G}\sum\limits_{j=0}^{d}x_g^{(j)}-1\right\|<\ep_0$.
\ee 

For $g\in G$ and $j=0,\ldots,d$, set $y_g^{(j)}=\sum\limits_{h\in G\setminus\{g\}}x_h^{(j)}$ and 
\[f_g^{(j)}=(x_g^{(j)}-y_g^{(j)})_+.\]
Since $A$ is commutative, we have $f_g^{(j)}\leq x_g^{(j)}$ and hence $f_g^{(j)}$ is a positive contraction in $A_g$. We will show that these elements satisfy the conditions in the statement. We begin by noting that the $f_g^{(j)}$ are 
very close to the $x_g^{(j)}$.

\textbf{Claim:} \emph{we have $\left\|x_g^{(j)}-f_g^{(j)}\right\|< |G|\sqrt{\ep_0}$ for all $g\in G$ and $j=0,\ldots,d$.}
To see this, and since $A$ is abelian, it is enough to prove that 
$\left|x_g^{(j)}(t)-f_g^{(j)}(t)\right|\leq |G|\sqrt{\ep_0}$ for every $t\in \mathrm{Spec}(A)$. Fix $t\in \mathrm{Spec}(A)$.
We divide the proof into two cases.

Suppose that $x_g^{(j)}(t)\geq y^{(j)}_g(t)$.
Then $f_g^{(j)}(t)=x^{(j)}_g(t)-y_g^{(j)}(t)$ and hence
\[\label{eqn:5.2}\tag{5.2}
x_g^{(j)}(t)-f_g^{(j)}(t)=y_g^{(j)}(t)=\sum\limits_{h\in G\setminus\{g\}}x_h^{(j)}(t).\]
For $h\neq g$, we have 
$x_h^{(j)}(t)\leq y_g^{(j)}(t)\leq x_g^{(j)}(t)$,
and thus 
\[\label{eqn:5.3}\tag{5.3}x_h^{(j)}(t)^2\leq x_h^{(j)}(t)x_g^{(j)}(t)< \ep_0.\] 
We conclude that
\[\left|x_g^{(j)}(t)-f_g^{(j)}(t)\right|
\stackrel{(\ref{eqn:5.2})}{\leq}
\sum_{h\in G\setminus\{g\}}\left|x_h^{(j)}(t)\right|
\stackrel{(\ref{eqn:5.3})}{<} (|G|-1)\sqrt{\ep_0}< |G|\sqrt{\ep_0}.\]

Suppose that $x_g^{(j)}(t)\leq y^{(j)}_g(t)$. 
Then $f_g^{(j)}(t)=0$ and hence $x_g^{(j)}(t)-f_g^{(j)}(t)=x_g^{(j)}(t)$.
Moreover, 
\[\label{eqn:5.4}\tag{5.4}
x_g^{(j)}(t)^2\leq \left(\sum\limits_{h\in G\setminus\{g\}}x_h^{(j)}(t)\right)x_g^{(j)}(t)\stackrel{\mathrm{(b)}}{\leq} (|G|-1)\ep_0.\] 
Thus,
\[\left|x_g^{(j)}(t)-f_g^{(j)}(t)\right|=\left|x_g^{(j)}(t)\right|
\stackrel{(\ref{eqn:5.4})}{\leq}\sqrt{(\left| G\right| -1)\ep_0}< |G|\sqrt{\ep_0}.\]
This proves the claim.

We check that the positive contractions $f_g^{(j)}$ for $g\in G$ and $j=0,\ldots,d$ satisfy conditions (1), (2) and (3) in the statement. Conditions (1) and (3) follow immediately by 
combining the claim above with conditions (a) and (c), respectively, so we only check (2). 
Let $g,h\in G$ with $g\neq h$ and let $j=0,\ldots,d$. 
Using the inequalities
$x_g^{(j)}\leq y_h^{(j)}$ and $x_h^{(j)}\leq y_g^{(j)}$ (which are valid since $g\neq h$) at the second step; and the identity $(z-w)_+(w-z)_+=0$ at the last step, we get  
\begin{align*}
f_g^{(j)}f_h^{(j)}&= (x_g^{(j)}-y_g^{(j)})_+(x_h^{(j)}-y_h^{(j)})_+\\
&\leq (y_h^{(j)}-y_g^{(j)})_+(y_g^{(j)}-y_h^{(j)})_+=0
\qedhere
\end{align*}
\end{proof}

The following lemma deals with obtaining domain-respecting orthogonal lifts.

\begin{lma}\label{lma:liftingCn}
Let $A$ be a commutative \ca, let $n\in\N$, and let $J, A_1,\ldots, A_n$ be ideals in $A$. We set $B=A/J$ with quotient map $\pi\colon A\to B$, and 
\[B_j=\frac{A_j}{A_j\cap J}\cong \frac{A_j+J}{J}\]
for $j=1,\ldots,n$. For $j=1,\ldots,n$, let $x_j\in A_j$ be a positive contraction satisfying $x_jx_k\in J$ for all $k=1,\ldots,n$ with $j\neq k$. 
Then there exist pairwise orthogonal positive contractions $y_j\in A_j$, for $j=1,\ldots,n$, satisfying $\pi(y_j)=\pi(x_j)$.
\end{lma}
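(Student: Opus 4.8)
The plan is to pass to the spectrum and reduce everything to an elementary pointwise argument, reusing the cut-down trick from the proof of \autoref{lma:CommAlgOrthogonalTowers}. First I would write $A=C_0(Y)$ for a locally compact Hausdorff space $Y$, so that $J=C_0(V)$ and $A_j=C_0(U_j)$ for open subsets $V,U_1,\dots,U_n\subseteq Y$, and the quotient map $\pi\colon A\to B\cong C_0(Y\setminus V)$ is restriction of functions to the closed set $Y\setminus V$. Under these identifications the hypothesis $x_jx_k\in J$ for $j\neq k$ becomes: $x_j(t)x_k(t)=0$ for every $t\in Y\setminus V$.

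Next, exactly as in \autoref{lma:CommAlgOrthogonalTowers}, I would set $y_j'=\sum_{k\neq j}x_k$ and $y_j=(x_j-y_j')_+$ for $j=1,\dots,n$. Since $A$ is commutative we have $0\leq y_j\leq x_j$ (because $y_j'\geq 0$), so each $y_j$ is a positive contraction lying in the ideal $A_j=C_0(U_j)$. For $j\neq k$ the inequalities $x_j\leq y_k'$ and $x_k\leq y_j'$ hold, since each of these elements appears as a summand of the other; by monotonicity of $t\mapsto t_+$ on the commutative algebra $A$ this gives $y_j\leq (y_k'-y_j')_+$ and $y_k\leq (y_j'-y_k')_+$, and since $(z-w)_+(w-z)_+=0$ for all self-adjoint $z,w$, commutativity yields $0\leq y_jy_k\leq (y_k'-y_j')_+(y_j'-y_k')_+=0$. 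Hence the $y_j$ are pairwise orthogonal positive contractions with $y_j\in A_j$.

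Finally, the lifting condition $\pi(y_j)=\pi(x_j)$ is checked pointwise on $Y\setminus V$: if $x_j(t)=0$ then $y_j(t)=0=x_j(t)$ since $y_j'(t)\geq 0$; if $x_j(t)\neq 0$ then $x_k(t)=0$ for all $k\neq j$ by hypothesis, so $y_j'(t)=0$ and therefore $y_j(t)=(x_j(t))_+=x_j(t)$. Thus $y_j$ and $x_j$ agree on $Y\setminus V$, i.e.\ $\pi(y_j)=\pi(x_j)$, which completes the proof. I do not expect any genuine obstacle here: the only point requiring care is that the cut-down $y_j$ remains inside the ideal $A_j$, which is automatic from $y_j\leq x_j$; commutativity of $A$ is used both for this domination and for the pointwise manipulations, which is precisely why the lemma is stated in the commutative setting.
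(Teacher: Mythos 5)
Your proof is correct, and it takes a somewhat different route from the paper's. The paper argues by induction on $n$: it first handles $n=2$ with $y_1=(x_1-x_2)_+$, $y_2=(x_2-x_1)_+$, and in the inductive step peels off one element at a time, setting $y_{n+1}=(x_{n+1}-\sum_{j\le n}x_j)_+$ and $z_j=(x_j-x_{n+1})_+$, then applying the inductive hypothesis to the $z_j$; the lifting identity is verified algebraically via $\pi(x_j)\pi(x_k)=0\Rightarrow\pi((x_j-x_k)_+)=\pi(x_j)$, without invoking the spectrum. You instead perform the cut-down simultaneously, $y_j=\bigl(x_j-\sum_{k\neq j}x_k\bigr)_+$ -- exactly the construction from \autoref{lma:CommAlgOrthogonalTowers} -- and check everything pointwise on $Y\setminus V$ after writing $A=C_0(Y)$, $J=C_0(V)$. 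All the key points go through: $0\le y_j\le x_j$ gives $y_j\in A_j$ since ideals are hereditary; the inequalities $x_j\le y_k'$, $x_k\le y_j'$ together with $(z-w)_+(w-z)_+=0$ give exact pairwise orthogonality; and on $Y\setminus V$ the hypothesis forces $y_j'$ to vanish wherever $x_j$ does not, so $\pi(y_j)=\pi(x_j)$ exactly (not just approximately, which is the crucial improvement over the approximate setting of \autoref{lma:CommAlgOrthogonalTowers}). Your one-shot construction is shorter and more uniform, at the cost of passing explicitly through Gelfand duality; the paper's induction stays purely algebraic with the same $(\cdot)_+$ identities and avoids any mention of the spectrum, which makes it marginally more portable in spirit, though both arguments use commutativity in an essential way.
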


\begin{proof} 
We prove the lemma by induction on $n$, the case $n=1$ being trivial. Assume that $n=2$, and let $x_1\in A_1$ and $x_2\in A_2$ be positive contractions
satisfying $x_1x_2\in J$. Set $y_1=(x_1-x_2)_+$ and $y_2=(x_2-x_1)_+$, which are positive contractions. 
Since $A$ is abelian, we have $y_j\leq x_j$ and hence $y_j$ belongs to $A_j$, for $j=1,2$. Moreover, $y_1y_2=0$.
Finally, using at the first step that $\pi(x_1)\pi(x_2)=0$, we get
\[\pi(x_{1})=(\pi(x_{1})-\pi(x_{2}))_+=\pi((x_1-x_2)_+)= \pi(y_{1}).\]
Similarly, $\pi(x_2)=\pi(y_2)$.
This proves the case $n=2$ of the statement. 

Assume now that we have proved the result for an integer $n\geq 2$, and let us prove it for $n+1$. Thus, let $A_1,\ldots, A_{n+1}$ be ideals in $A$, and let 
$x_1,\ldots, x_{n+1}$ be as in the statement. Set $x=\sum_{j=1}^n x_j$ and define $y_{n+1}=(x_{n+1}-x)_+$. Then $y_{n+1}$ is a positive contraction
with $y_{n+1}\leq x_{n+1}$, and hence $y_{n+1}\in A_{n+1}$. Moreover, since $\pi(x)\pi(x_{n+1})=0$ and using the same argument as in the case $n=2$, 
it follows that $\pi(y_{n+1})=\pi(x_{n+1})$. 

For $j=1,\ldots,n$, set $z_j=(x_j-x_{n+1})_+$, which is a positive contraction in $A_j$ with $z_j\leq x_j$. Since $\pi(x_j)\pi(x_{n+1})=0$ for $j\leq n$,
it follows that $\pi(z_j)=\pi(x_j)$. In particular, for $1\leq j\neq k\leq n$ we have $\pi(z_jz_k)=0$, that is, $z_jz_k \in J$. By the inductive step
applied to $z_1,\ldots,z_n$, there exist orthogonal positive contractions $y_j\leq z_j$, for $j=1,\ldots,n$, satisfying $\pi(y_j)=\pi(z_j)$. 
Then $y_j\leq x_j$ and $\pi(y_j)=\pi(x_j)$ for $j=1,\ldots,n+1$. It remains to prove that the $y_1,\ldots,y_{n+1}$ are pairwise orthogonal. 
By construction, the contractions $y_1,\ldots, y_n$ are pairwise orthogonal, so it suffices to check orthogonality with $y_{n+1}$. For $j=1,\ldots,n$, 
we have
\[y_jy_{n+1}\leq z_j y_{n+1} = (x_j-x_{n+1})_+ (x_{n+1}-x)_+\leq (x_j-x_{n+1})_+ (x_{n+1}-x_j)_+=0,\]
using at the third step that $A$ is commutative and $x\geq x_j$. 
\end{proof}

We are now ready to prove that an extension of partial actions on commutative \ca s with finite Rokhlin dimension
again has finite Rokhlin dimension.
For use in its proof, we recall the following standard fact. 
Let $A$ be a \uca, let $J$ be an ideal in $A$, let $\pi\colon A\to A/J$ denote the quotient map, let $(e_\lambda)_{\lambda\in\Lambda}$
be any approximate identity for $J$, and let $a\in A$. 
Then $\|\pi(a)\|=\limsup\limits_{\lambda\in\Lambda}\|a(1-e_\lambda)\|$.

\begin{prop}\label{prop:ExtRdim}
Let $G$ be a finite group, let $A$ be a commutative unital $C^*$-algebra,
let $\alpha$ be a partial action of $G$ on $A$, and let $J$ be an $\alpha$-invariant ideal in $A$ with 
a $G$-invariant approximate identity. 
Denote by $\overline{\alpha}$ the induced partial action of $G$ on $A/J$ as in \autoref{prop:PermProp}. Then 
\[\dimRok(\alpha)\leq \dimRok(\alpha|_{J})+\dimRok(\overline{\alpha})+1.\]
\end{prop}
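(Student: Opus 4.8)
The plan is to show that $\dimRok(\alpha)\le d:=\dimRok(\alpha|_J)+\dimRok(\overline\alpha)+1$ by constructing, for each $\ep>0$ and each finite $F\subseteq A$, Rokhlin towers for $\alpha$ with $d+1=(d_Q+1)+(d_J+1)$ levels, where $d_Q:=\dimRok(\overline\alpha)$ and $d_J:=\dimRok(\alpha|_J)$ (which we may assume finite); the first $d_Q+1$ levels will be truncated lifts of Rokhlin towers for $\overline\alpha$, and the last $d_J+1$ will be Rokhlin towers for $\alpha|_J$ absorbing the resulting ``defect''. We may assume that $F$ consists of contractions and that $1_A\in F$. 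Fix a $G$-invariant approximate identity $(u_\lambda)_\lambda$ of $J$; the key consequence of $G$-invariance is that each $\chi_\lambda:=1-u_\lambda\in A$ is a positive contraction with $\pi(\chi_\lambda)=1_{A/J}$ and $\alpha_g(\chi_\lambda a)=\chi_\lambda\alpha_g(a)$ for all $g\in G$ and $a\in A_{g^{-1}}$.

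For the quotient, note first that $A/J$ is unital and commutative, so \autoref{lma:CommAlgOrthogonalTowers} applied to $\overline\alpha$ yields, for a small $\ep_1>0$, positive contractions $\overline e_g^{(k)}\in(A/J)_g$ (for $g\in G$ and $0\le k\le d_Q$) that are \emph{exactly} orthogonal within each level, sum to within $\ep_1$ of the unit, and are $\ep_1$-equivariant on $\pi(F)$. Lifting each $\overline e_g^{(k)}$ to a positive contraction in $A_g$ and using exact orthogonality to place the pairwise products of the lifts in $J$, \autoref{lma:liftingCn} — applied one level at a time, with the ideals $J$ and $(A_g)_{g\in G}$ — produces positive contractions $\ell_g^{(k)}\in A_g$, exactly orthogonal within each level, with $\pi(\ell_g^{(k)})=\overline e_g^{(k)}$; these are only equivariant \emph{modulo $J$} on $F$. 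The crucial move is then to truncate: put $m_g^{(k)}:=(1+2\ep_1)^{-1}\chi_\lambda\ell_g^{(k)}$ for $\lambda$ large. Since $\chi_\lambda$ is central, $\alpha$-equivariant and maps to $1$ in $A/J$, this preserves the exact within-level orthogonality and the images in $A/J$; the standard fact recalled just before the statement, $\|\pi(a)\|=\limsup_\lambda\|a(1-u_\lambda)\|$, upgrades the mod-$J$ equivariance of the $\ell_g^{(k)}$ to honest $\ep_1$-equivariance of the $m_g^{(k)}$ on $F$ once $\lambda$ is large; and since $\sum_{g,k}\ell_g^{(k)}\le d_Q+1$ everywhere while $\sum_{g,k}\ell_g^{(k)}\le 1+\ep_1$ on a neighbourhood of $\mathrm{Spec}(A/J)$, whose (compact) complement $\chi_\lambda$ makes uniformly small for $\lambda$ large, we obtain $p:=\sum_{g,k}m_g^{(k)}\le 1$ with $\|\pi(p)-1\|<\ep_2$ for a controlled $\ep_2>0$.

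Now the defect: $r:=1-p$ is a positive contraction with $\|\pi(r)\|<\ep_2$, and summing the $\ep_1$-equivariance of the $m_g^{(k)}$ over $g$ and $k$ shows that $r$ is approximately $\alpha$-invariant on $F$, with $\|\alpha_g(rx)-r\alpha_g(x)\|\le (d_Q+1)|G|\,\ep_1$ for $x\in F\cap A_{g^{-1}}$. Since $\|\pi(r)\|<\ep_2$, the element $r':=(r-\ep_2)_+$ is a positive contraction supported, in $\mathrm{Spec}(A)$, on a compact subset of $\mathrm{Spec}(J)$, so $r'\in J$ and $\|r-r'\|\le\ep_2$. Apply \autoref{df:Rdim} to $\alpha|_J$ with a finite test set $F'\subseteq J$ large enough to contain $r'$, the products $r'x$ (for $x\in F\cap A_{g^{-1}}$ and $g\in G$), and enough further elements — in particular some $u_\lambda$ — to serve as multiplicative witnesses, and put $\widetilde{f}_g^{(j)}:=r'f_g^{(j)}\in J_g$ for the resulting towers $f_g^{(j)}$ ($g\in G$, $0\le j\le d_J$). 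I claim the $d+1$ families $\{m_g^{(k)}\}_{0\le k\le d_Q}$ and $\{\widetilde{f}_g^{(j)}\}_{0\le j\le d_J}$ are Rokhlin towers for $\alpha$ with respect to $(F,\ep)$, once $\ep_1,\ep_2$ and the tolerance used for $\alpha|_J$ are chosen small enough in terms of $\ep,|G|,d_J,d_Q$: condition~(4) of \autoref{df:Rdim} is vacuous since $A$ is commutative; condition~(2) holds because the $m_g^{(k)}$ are exactly orthogonal within levels and the $\widetilde{f}_g^{(j)}$ become approximately so after the factor $r'$; condition~(3) holds because $\sum_{g,j}\widetilde{f}_g^{(j)}=r'\sum_{g,j}f_g^{(j)}\approx r'\approx r$, so $p+\sum_{g,j}\widetilde{f}_g^{(j)}\approx 1$; and condition~(1) holds for the $m_g^{(k)}$ by construction and for the $\widetilde{f}_g^{(j)}$ via $\alpha_g(\widetilde{f}_h^{(j)}x)=\alpha_g(f_h^{(j)}(r'x))\approx f_{gh}^{(j)}\alpha_g(r'x)\approx f_{gh}^{(j)}r\alpha_g(x)\approx\widetilde{f}_{gh}^{(j)}\alpha_g(x)$, using successively the choice of $F'$, then $\|r-r'\|\le\ep_2$ together with the approximate $\alpha$-invariance of $r$ on $F$, and then $\|r-r'\|\le\ep_2$ once more.

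The step I expect to be the main obstacle is keeping the $J$-levels compatible with $\alpha$, where two difficulties are entangled. First, the lifted quotient towers $\ell_g^{(k)}$ are equivariant only modulo $J$ and must be cut down so their total stays below $1$, yet a generic cutoff would destroy equivariance; the remedy is to cut by the element $\chi_\lambda$, which is itself compatible with the partial action — and this is exactly what the hypothesis that $J$ has a \emph{$G$-invariant} approximate identity provides. Second, the defect $r$ that the $J$-towers have to cover is not $\alpha$-invariant; this is circumvented by the observation that it is nonetheless \emph{approximately} $\alpha$-invariant on $F$, together with the truncation of $r$ into $J$ before the Rokhlin property of $\alpha|_J$ is invoked. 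Throughout, commutativity of $A$ is used decisively: it makes conditions~(2), (4) and (5) essentially free, bounds each $\sum_g\ell_g^{(k)}$ pointwise by $1$, and is what allows the application of the orthogonalisation and lifting results \autoref{lma:CommAlgOrthogonalTowers} and \autoref{lma:liftingCn}.
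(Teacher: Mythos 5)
Your construction follows the paper's own proof essentially step for step: make the quotient towers exactly orthogonal via \autoref{lma:CommAlgOrthogonalTowers}, lift them orthogonally into the $A_g$ via \autoref{lma:liftingCn}, cut by $1-q$ with $q$ taken from the $G$-invariant approximate identity of $J$ (whose exact commutation with $\alpha$ on all of $A_{g^{-1}}$ is the paper's ``Claim''), and then fill the remaining defect with Rokhlin towers for $\alpha|_J$ multiplied by an element of $J$. The only harmless deviations are that the paper multiplies the $J$-towers by $q$ itself (so the defect covered is exactly $q$, which is exactly invariant) whereas you multiply by the truncated, only approximately invariant defect $(1-\sum_{g,k}m_g^{(k)}-\ep_2)_+$, and that both you and the paper pass over the same small point of upgrading the witnessed Rokhlin conditions in the non-unital algebra $J$ to norm estimates --- which your inclusion of an approximate-identity element $u_\lambda$ in the test set, combined with commutativity and $G$-invariance, indeed accomplishes.
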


\begin{proof}
Set $d_1=\dimRok(\alpha|_J)$ and $d_2=\dimRok(\overline{\alpha})$. 
Without loss of generality, we assume that $d_1,d_2<\I$. Let $\ep>0$ and let $F\subseteq A$ be a finite subset. 
We abbreviate $B=A/J$ and $B_g= A_g/(A_g\cap J)$ for all $g\in G$.
We denote by $\pi\colon A\to B$ the canonical quotient map. 
We use \autoref{lma:CommAlgOrthogonalTowers} 
to choose positive contractions $x_g^{(j)}\in B_g$, for $g\in G$ and $j=0,\ldots, d_2$,
satisfying 
\be
\item[(B.1)] $\left\| \overline{\alpha}_g(x_h^{(j)}b)- x_{gh}^{(j)}\overline{\alpha}_g(b)\right\|<\varepsilon$, for all $g,h\in G$, for all $j=0,\ldots,d_2$, 
and for all $b\in \pi(F)\cap B_{g^{-1}}$;
\item[(B.2)] $x_g^{(j)}x_h^{(j)}=0$, for all $g,h\in G$ with $g\neq h$ and for all $j=0,\ldots,d_2$;
\item[(B.3)]  $\left\|\sum\limits_{g\in G}\sum\limits_{j=0}^{d_2}x_g^{(j)}-1\right\|<\ep$. 
\ee
	 
Fix $j=0,\ldots,d_2$. Use \autoref{lma:liftingCn} to find mutually orthogonal positive contractions $y_g^{(j)}\in A_g$, for $g\in G$,
satisfying $\pi(y^{(j)}_g)=x_g^{(j)}$. Using that 
$J$ has a $G$-invariant approximate identity, find a positive contraction $q\in J$ 
satisfying:
\be
\item[(a)] $\alpha_g(qx)=q\alpha_g(x)$, for all $x\in A_{g^{-1}}\cap J$;
\item[(b)] $\left\| (1-\sum\limits_{g\in G}\sum\limits_{j=0}^{d} y_g^{(j)})(1-q)\right\|<\ep $;
\item[(c)] $\left\|  (\alpha_g(y_h^{(j)}a)-y_{gh}^{(j)}\alpha_g(a))(1-q)\right\|<\ep$, for all $a\in A_{g^{-1}}\cap F$.
\ee

\textbf{Claim:} \emph{$\alpha_g(qa)=q\alpha_g(a)$ for all $a\in A_{g^{-1}}$.}
To prove this, let $(e_\lambda)_{\lambda\in \Lambda}$ be an approximate identity for $A_{g^{-1}}\cap J$. 
Then $(\alpha_g(e_\lambda))_{\lambda\in \Lambda}$ is an approximate identity for $A_g\cap J$. 
Fix $a\in A_{g^{-1}}$. In the next computation, we use at the third step condition (a) above with $x=ae_\lambda$; 
and the fact that $qa$ belongs to $A_{g^{-1}}\cap J$ at the fourth step: 
\[q\alpha_g(a)=\lim_{\lambda\in \Lambda}q\alpha_g(a)\alpha_g(e_\lambda)=\lim_{\lambda\in\Lambda}q\alpha_g(ae_\lambda)= \lim_{\lambda\in\Lambda}\alpha_g(qae_{\lambda})=\alpha_g(qa).\]
This proves the claim.

Set $z_g^{(j)}=y_g^{(j)}(1-q)$ for $g\in G$ and $j=0,\ldots,d_2$. 
Then 
\[\label{eqn:5.5}\tag{5.5}z_g^{(j)}z_h^{(j)}=0\] for $g,h\in G$ with $g\neq h$ and for $j=0,\ldots,d_2$, since $1-q$ is central. 
Let $g,h\in G$, let $a\in F\cap A_{g^{-1}}$, and let $j=0,\ldots,d_2$ be given. 
Using the above claim at the second step, we get  
\begin{align*}\label{eqn:5.6}\tag{5.6}
\left\|\alpha_g(z_h^{(j)}a)-z_{gh}^{(j)}\alpha_g(a)\right\|&=\left\|\alpha_g(y_h^{(j)}a(1-q))-y_{gh}^{(j)}\alpha_g(a)(1-q)\right\|\\
&=\left\|\left(\alpha_g(y_h^{(j)}a)-y_{gh}^{(j)}\alpha_g(a)\right)(1-q)\right\|
\stackrel{(c)}{<}\ep. 
\end{align*}
Furthermore, 
\[\label{eqn:5.7}\tag{5.7}\left\|\sum\limits_{g\in G}\sum\limits_{j=0}^{d_2}z_g^{(j)}-(1-q)\right\|<\ep.\]

Set $F_J=\{qa\colon a\in F\}\cup \{q\}\subseteq J$.
Choose positive contractions $\xi_g^{(k)}\in A_g\cap J$, 
for $g\in G$ and $k=0,\ldots, d_1$, satisfying
\be
\item[(J.1)] $\left\| \alpha_g(\xi_h^{(k)}c)- \xi_{gh}^{(k)}\alpha_g(c)\right\|<\varepsilon$, for all $g,h\in G$, for all $k=0,\ldots,d_1$, 
and for all $c\in F_J\cap J_{g^{-1}}$;
\item[(J.2)] $\left\|\xi_g^{(k)}\xi_h^{(k)}c\right\|<\ep$, for all $g,h\in G$ with $g\neq h$, for all $k=0,\ldots,d_1$, and for all $c\in F_J$;
\item[(J.3)]  $\left\|c-c\left(\sum\limits_{g\in G}\sum\limits_{k=0}^{d_1}\xi_g^{(k)}\right)\right\|<\ep$ for all $c\in F_J$. 
\ee

Set $\eta_g^{(k)}=\xi_g^{(k)}q$ for all $g\in G$ and $k=0,\ldots, d_1$.
For $a\in F\cap A_{g^{-1}}$, we have 
\[\label{eqn:5.8}\tag{5.8}\left\| \alpha_g(\eta_h^{(k)}a)-\eta_{gh}^{(k)}\alpha_g(a)\right\|=\left\| \alpha_g(\xi_h^{(k)}qa)-\xi_{gh}^{(k)}q\alpha_g(a)\right\|\stackrel{\textrm{(J.1)}}{<} \ep,\]
because $qa\in J_{g^{-1}}\cap F_J$.
Since $q$ is central (because $A$ is commutative), we have 
\[\label{eqn:5.9}\tag{5.9}\left\|\eta_g^{(k)}\eta_h^{(k)}\right\|\leq \left\| \xi_g^{(k)}\xi_h^{(k)}\right\|<\ep
\]
for $g,h\in G$ with $g\neq h$ and for $k=0,\ldots,d_1$.
Further, taking $c=q$ in (J.3) gives 
\[\label{eqn:5.10}\tag{5.10}\left\| \sum\limits_{g\in G}\sum\limits_{k=0}^{d_1} \eta_g^{(k)}-q\right\| \leq \ep.\]

For $g\in G$ and $\ell=0,\ldots,d_1+d_2+1$, set
\[f_g^{(j)} 
= \left.
\begin{cases}
z_g^{(j)}, & \text{for }   \ell=0,\ldots,d_2 \\
\eta_g^{(\ell-d_2-1)}, & \text{for } \ell=d_2+1,d_2+2,\ldots,d_1+d_2+1
\end{cases}
\right.\]
Then $\{f_g^{(j)}\colon g\in G, 0\leq \ell\leq d_1+d_2+1\}$ are Rokhlin towers for $\alpha$ with
respect to $(F,2\ep)$. Indeed, condition (1) follows from
(\ref{eqn:5.5}) and (\ref{eqn:5.8}); condition (2) 
follows from (\ref{eqn:5.6}) and (\ref{eqn:5.9}); and
condition (3) follows from (\ref{eqn:5.7}) and 
(\ref{eqn:5.10}).
\end{proof}

For technical reasons it will be more convenient to work with unital \ca s. 
In order to reduce to the unital case, we define the minimal partial unitization of a partial action. 
This unitization differs from the usual unitization of a global action, 
since the minimal partial unitization is never a global action, even if the original action was. 
For a \ca\ $A$, we denote by $A^+$ the unitization of $A$, which as a Banach space is isomorphic to $A\oplus \C$.

\begin{df} \label{df:UnitizationPA}
Let $G$ be a locally compact group, let $A$ be a \ca, and let $\alpha=\left((A_g)_{g\in G}, (\alpha_g)_{g\in G}\right)$ be a partial action of
$G$ on $A$. We define the \emph{minimal partial unitization} $\alpha^+$ of $\alpha$ to be the partial action of $G$ on $A^+$ determined by
$A^+_g=A_g$ and $\alpha^+_g=\alpha_g$ for all $g\in G\setminus\{1\}$.
\end{df} 

For later use, we record the following easy observation:

\begin{rem}\label{cor:UnitizRdim}
Let $G$ be a finite group, let $A$ be a commutative \ca, and let $\alpha$ be a partial action of $G$ on $A$.
Then $(A,\alpha)$ is an equivariant ideal in $(A^+,\alpha^+)$,
and hence 
$\dimRok(\alpha)\leq \dimRok(\alpha^+)$ by \autoref{prop:PermProp}.
\end{rem}



For a partial action $\theta$ of a group $G$ on a locally compact space $X$, one defines its \emph{minimal partial compactification} $\theta^+$ analogously to \autoref{df:UnitizationPA}, only by compactifying the domain of the identity element and leaving the rest unchanged. It is clear that the partial action of $G$ on $C(X^+)$ induced
by $\theta^+$ can be canonically identified by the minimal partial unitization of the partial action induced by $\theta$.

One feature of the minimal partial unitization, by comparison with the unitization in the sense of global actions, is that
the minimal partial unitization of a free action on a locally compact space is again free, as we show next. On the other
hand, the unitization (in the global sense) of a free action is never free, since the point at infinity is necessarily
fixed.

\begin{lma}\label{lma:UnitizPAfree}
Let $G$ be a locally compact group, let $X$ be a locally compact Hausdorff space, and let 
$\theta=\left((X_g)_{g\in G}, (\theta_g)_{g\in G}\right)$ be a partial action of $G$ on $X$.
Then $\theta$ is free if and only if $\theta^+$ is free. 
\end{lma}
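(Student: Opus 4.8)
The plan is to simply unwind the definitions: I claim that the freeness condition for $\theta^+$ is word-for-word the same statement as the freeness condition for $\theta$, so the equivalence will be a tautology. Recall that by construction (the topological analogue of \autoref{df:UnitizationPA}) the space $X^+$ is $X$ together with one extra point $\infty$, and this point is adjoined only to the domain $X^+_1=X^+$ of the identity; for every $g\in G\setminus\{1\}$ one has $X^+_{g}=X_g$ and $\theta^+_g=\theta_g$, with unchanged domain $X^+_{g^{-1}}=X_{g^{-1}}$.

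First I would observe that freeness only concerns non-identity group elements: $\theta$ is free if and only if $\theta_g(x)\neq x$ for every $g\in G\setminus\{1\}$ and every $x\in X_{g^{-1}}$, and likewise for $\theta^+$. Since $\infty\notin X_{g^{-1}}=X^+_{g^{-1}}$ whenever $g\neq 1$, the set of pairs $(g,x)$ with $g\neq 1$ and $x\in X^+_{g^{-1}}$ is exactly the set of pairs $(g,x)$ with $g\neq 1$ and $x\in X_{g^{-1}}$, and on every such pair $\theta^+_g(x)=\theta_g(x)$. Hence the predicate ``$\theta^+_g(x)\neq x$ for all $g\neq 1$ and all $x\in X^+_{g^{-1}}$'' is identical to the predicate ``$\theta_g(x)\neq x$ for all $g\neq 1$ and all $x\in X_{g^{-1}}$'', which gives both implications simultaneously.

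There is no genuine obstacle in this argument; the only thing that has to be checked is the trivial fact that the compactifying point $\infty$ is not introduced into the domain of any nontrivial $\theta^+_g$, which is immediate from the definition of the minimal partial compactification. (This is precisely the point at which the minimal \emph{partial} compactification differs from the usual global unitization, for which the point at infinity is fixed by all of $G$ and therefore always destroys freeness.)
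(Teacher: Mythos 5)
Your proposal is correct and is essentially the paper's own argument: since $X^+_{g^{-1}}=X_{g^{-1}}$ and $\theta^+_g=\theta_g$ for every $g\in G\setminus\{1\}$, the freeness conditions for $\theta$ and $\theta^+$ quantify over exactly the same pairs $(g,x)$ and so coincide. The paper phrases the nontrivial direction as a short argument by contradiction, but the content is identical to your tautological unwinding of the definitions.
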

\begin{proof}
It is clear that $\theta$ is free if $\theta^+$ is free. Conversely, and arguing by contradiction 
let $g\in G\setminus\{1\}$ and $x\in X_{g^{-1}}^+$ satisfy $\theta^+_g(x)=x$. 
Since $g\neq 1$, this implies that $x\in X_{g^{-1}}$ and $\theta_g(x)=x$, which contradicts the fact that 
$\theta$ is free, as desired. 
\end{proof}

We have now arrived at the main result of this section.

\begin{thm}\label{thm:FreeFiniteRdim}
Let $G$ be a finite group, let $X$ be a locally compact space
with $\dim(X)<\I$, let 
$\theta$ be a partial action of $G$ on $X$, and let $\alpha$
denote the partial action of $G$ on $C_0(X)$ induced
by $\theta$.
If $\theta$ is free, then 
\[\dimRok(\alpha)\leq (|G|-1)(\dim(X)+1).\]
\end{thm}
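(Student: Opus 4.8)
The plan is to run the three-step strategy described above: by \autoref{prop:FreeRokDimDecomp} free decomposable partial actions have finite Rokhlin dimension, by \autoref{prop:ExtRdim} this behaviour is stable under extensions (for commutative, unital coefficient algebras), and by \autoref{thm: FinGrpIntersProp} every partial action of a finite group is a canonical iterated extension of decomposable ones; assembling these with a little bookkeeping on covering dimensions will produce the stated bound.

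First I would reduce to the case of a unital (equivalently compact) coefficient algebra, as required by \autoref{prop:ExtRdim}. Passing to the minimal partial compactification $X^+$, \autoref{lma:UnitizPAfree} shows that $\theta^+$ is again free, the induced partial action $\alpha^+$ on $C(X^+)$ is the minimal partial unitization of $\alpha$, and $\dimRok(\alpha)\leq\dimRok(\alpha^+)$ by \autoref{cor:UnitizRdim}. Since the one-point compactification of a locally compact Hausdorff space of finite covering dimension has the same covering dimension, it is enough to bound $\dimRok(\alpha^+)$ by $(|G|-1)(\dim(X^+)+1)$; renaming $X^+$ as $X$, I may henceforth assume that $X$ is compact and that $\alpha$ acts on $A=C(X)$.

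Now I would apply \autoref{thm: FinGrpIntersProp} to $(A,\alpha)$, obtaining equivariant extensions
\[
0 \longrightarrow (D^{(k)},\delta^{(k)}) \longrightarrow (A^{(k)},\alpha^{(k)}) \longrightarrow (A^{(k-1)},\alpha^{(k-1)}) \longrightarrow 0
\]
for $2\leq k\leq |G|$, with $A^{(|G|)}=A$, $\alpha^{(|G|)}=\alpha$, with $\delta^{(k)}$ having the $k$-decomposition property, and with $\alpha^{(1)}$ having the $1$-decomposition property. Each $A^{(k)}$ is a quotient of $C(X)$ and each $D^{(k)}$ is an ideal in such a quotient, so they are all commutative, every $A^{(k)}$ is unital, and the associated topological partial actions are restrictions of the free action $\theta$ to $G$-invariant closed, respectively open, subspaces of $X$; in particular they are free and act on spaces of covering dimension at most $\dim(X)$. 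By \autoref{eg:trivialRp}, the $1$-decomposition property forces $\alpha^{(1)}$ to be the trivial partial action, so $\dimRok(\alpha^{(1)})=0$. I would then check that each $D^{(k)}$ admits a $G$-invariant approximate identity, as needed to invoke \autoref{prop:ExtRdim}: this follows from the $k$-decomposition property of $\delta^{(k)}$, by choosing an $H_\tau$-invariant approximate identity for each of the global subsystems on $D^{(k)}_\tau$ furnished by \autoref{prop:EquivDecomp}, transporting it along the orbit, and summing over the orbit space.

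Granting this, \autoref{prop:ExtRdim} applied to the $k$-th extension gives
\[
\dimRok(\alpha^{(k)}) \leq \dimRok(\delta^{(k)}) + \dimRok(\alpha^{(k-1)}) + 1,
\]
while \autoref{prop:FreeRokDimDecomp}, applied to the free decomposable partial action $\delta^{(k)}$, gives $\dimRok(\delta^{(k)})\leq\dim(X)$. A straightforward induction on $k$, starting from $\dimRok(\alpha^{(1)})=0$, then yields $\dimRok(\alpha^{(k)})\leq(k-1)(\dim(X)+1)$, and taking $k=|G|$ completes the proof. The genuinely difficult part of the argument — the domain-respecting lifting of Rokhlin towers through an extension, where commutativity of the coefficient algebra is essential — has already been carried out in \autoref{prop:ExtRdim} (through \autoref{lma:CommAlgOrthogonalTowers} and \autoref{lma:liftingCn}); in the present assembly the only points that demand a moment's care are the reduction to the unital setting and the verification that each $D^{(k)}$ carries a $G$-invariant approximate identity, the rest being routine bookkeeping on covering dimension together with the induction on the length of the tower.
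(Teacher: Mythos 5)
Your proposal is correct and follows essentially the same route as the paper's proof: reduce to the compact case via the minimal partial unitization (\autoref{lma:UnitizPAfree}, \autoref{cor:UnitizRdim}), feed the canonical tower of extensions from \autoref{thm: FinGrpIntersProp} into \autoref{prop:ExtRdim}, bound each decomposable layer by $\dim(X)$ using \autoref{prop:FreeRokDimDecomp}, and induct. The only cosmetic difference is that the paper simply cites Proposition~4.4 of \cite{AbaGarGef_decomposable_2020} for the $G$-invariant approximate identity of the decomposable ideals, whereas you sketch that verification directly.
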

\begin{proof}
We begin with a general fact.

\textbf{Claim:} \emph{let $\beta$ be a free partial
action of $G$ on a unital commutative \ca\ $B$ with 
finite-dimensional spectrum. 
Suppose that there is an equivariant extension
\[\xymatrix{0\ar[r] & (D,\delta) \ar[r]& (B,\beta)\ar[r] & (C,\gamma)\ar[r]&0,}\]
with $\delta$ decomposable and $\dimRok(\gamma)<\I$. Then}
\[\dimRok(\beta)\leq \dimRok(\delta)+\dimRok(\gamma)+1<\I.\]
By Proposition~4.4 in \cite{AbaGarGef_decomposable_2020},
$(D,\delta)$ admits a $G$-invariant approximate identity.
Since $B$ is unital and commutative,
by \autoref{prop:ExtRdim}, 
we have $\dimRok(\beta)\leq \dimRok(\delta)+\dimRok(\gamma)+1$, so
it suffices to show that
$\dimRok(\delta)$ is finite. 
Since $\beta$ is free, it is easy to see that so is
$\delta$. 
Since $\delta$ is decomposable, 
the claim follows from \autoref{prop:FreeRokDimDecomp}. 

We now prove the theorem. We will obtain the bound for 
$\dimRok(\alpha^+)$,
which is enough by \autoref{cor:UnitizRdim}.
Since $\alpha^+$ is free by \autoref{lma:UnitizPAfree},
upon replacing $\alpha$ with $\alpha^+$ we may assume
that $X$ is compact. 
By Theorem~6.1 in~\cite{AbaGarGef_decomposable_2020},
there are canonical equivariant extensions 
\[\label{eqn:5.11}\tag{5.11}
\xymatrix{0\ar[r] & (D^{(k)},\delta^{(k)}) \ar[r]& (A^{(k)},\alpha^{(k)})\ar[r] & (A^{(k-1)},\alpha^{(k-1)})\ar[r]&0,}
\]
for $2\leq k \leq |G|$, satisfying the following properties 
be
\be 
\item $A^{(|G|)}=C(X)$ and $\alpha^{|G|}=\alpha$;
\item $\delta^{(k)}$ has the $k$-decomposition property;
\item $\alpha^{(1)}$ has the 1-decomposition property.
\ee

Note that all the partial actions involved are free
(this is shown inductively, using that restrictions
of free actions are free). Moreover, $A^{(k)}$ is unital
and commutative for all $k=2,\ldots,|G|$. 
Note that $\alpha^{(1)}$ has Rokhlin dimension zero
by \autoref{eg:trivialRp}.
Applying the above claim to the extension in
($\ref{eqn:5.11}$) with $k=2$, we get 
\[\dimRok(\alpha^{(2)})\leq \dimRok(\delta^{(2)})+1<\I.\] Continuing inductively, 
after $|G|-1$ steps we deducee that 
\[\label{eqn:5.12}\tag{5.12}
\dimRok(\alpha)\leq  \left(\dimRok(\delta^{(2)})+1\right)+\cdots+ \left(\dimRok(\delta^{(|G|)})+1\right)<\I.
\]

Suppose now that $\dim(X)$ is finite. For $k=2,\ldots,|G|$, 
let $Y_k$ denote the spectrum of $D^{(k)}$. 
Since $\dimRok(\delta^{(k)})\leq \dim(Y_k)$ by \autoref{prop:FreeRokDimDecomp}, and $\dim(Y_k)\leq \dim(X)$, 
the dimensional 
estimate in the statement follows from
($\ref{eqn:5.12}$).
\end{proof}

For \emph{clopen} partial actions (that is, partial actions with clopen domains), no dimensionality assumptions
on $X$ are needed. 

\begin{prop}
Let $G$ be a finite group, let $X$ be a (not necessarily finite-
dimensional) compact
Hausdorff space, and let $\theta$ be a free clopen partial action
of $G$ on $X$, and let $\alpha$ denote the induced partial action
of $G$ on $C(X)$. Then 
\[\dimRok(\alpha)<\min\{\dim(X)+1,\I\}.\]
In particular, $\dimRok(\alpha)<\I$ even if 
$\dim(X)=\I$. 
\end{prop}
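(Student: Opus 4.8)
The plan is to leverage the clopen hypothesis to turn the canonical decomposition of $X$ by ``domain type'' into an honest \emph{direct sum} decomposition into $G$-invariant clopen pieces, each carrying a partial action with the $n$-decomposition property. This is precisely what avoids the accumulation of the $(|G|-1)$ and $+1$ factors appearing in \autoref{thm:FreeFiniteRdim}, and is the mechanism that should also remove the dimension hypothesis.

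First I would note that, since each $X_g$ is clopen, the assignment $x\mapsto\tau(x):=\{g\in G\colon x\in X_g\}$ is locally constant, so for $n=1,\dots,|G|$ the set $X^{(n)}=\{x\in X\colon|\tau(x)|=n\}$ is clopen and $X=\bigsqcup_{n=1}^{|G|}X^{(n)}$. Next I would establish $G$-invariance of each $X^{(n)}$: using the partial-action axioms together with freeness, one checks that $\tau(\theta_a(x))=a\,\tau(x)$ whenever $a\in\tau(x)^{-1}$ (freeness is what makes $g\mapsto\theta_g(x)$ injective on $\tau(x)^{-1}$, which pins down the domain type of $\theta_a(x)$), hence $|\tau(\theta_a(x))|=|\tau(x)|$. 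Therefore $\alpha$ decomposes equivariantly as a finite direct sum $\alpha=\bigoplus_n\alpha^{(n)}$, where $\alpha^{(n)}$ is the restriction of $\alpha$ to $C(X^{(n)})$. For $\tau\in\mathcal{T}_n(G)$ the clopen set $\bigl(\bigcap_{g\in\tau}X_g\bigr)\cap X^{(n)}$ equals $\{x\in X^{(n)}\colon\tau(x)=\tau\}$ (since $\tau(x)\supseteq\tau$ and $|\tau(x)|=n=|\tau|$ force equality); these sets partition $X^{(n)}$ and are disjoint from $X_g$ for $g\notin\tau$, so $\alpha^{(n)}$ has the $n$-decomposition property of \autoref{df:nIntProp}, and it is free as a restriction of $\alpha$.

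Since $\dimRok$ of a finite direct sum is the maximum of the summands, it suffices to bound each $\dimRok(\alpha^{(n)})$. If $\dim(X)<\infty$, then $\dim(X^{(n)})\le\dim(X)$ because $X^{(n)}$ is clopen in $X$, and \autoref{prop:FreeRokDimDecomp} applies to the free $n$-decomposable system $\alpha^{(n)}$ to give $\dimRok(\alpha^{(n)})\le\dim(X^{(n)})\le\dim(X)$; hence $\dimRok(\alpha)\le\dim(X)<\dim(X)+1$. For the unconditional finiteness (the case $\dim(X)=\infty$), \autoref{prop:FreeRokDimDecomp} no longer applies, and I would instead run its reduction by hand: by \autoref{thm:dimRokResH}, $\dimRok(\alpha^{(n)})=\max_{\tau\in\mathcal{T}_n(G)}\dimRok(\alpha^{(n)}|_{H_\tau})$, where $\alpha^{(n)}|_{H_\tau}$ is the \emph{global}, and again free, action of the subgroup $H_\tau\le G$ on $C(X^{(n)}_\tau)$. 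Finiteness of $\dimRok(\alpha)$ then reduces to finiteness of the Rokhlin dimension of each of these finitely many free global subsystems, and here is where the clopen structure must be used a second time — e.g. via the globalizability of a free clopen partial action together with \autoref{thm:RdimGlobaliz} (so that one only needs to control the free global enveloping action of each $H_\tau$-piece), or via the rigidity of the orbit partition forced by the clopen domains.

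The main obstacle is exactly this last step: making the passage to the global subsystems genuinely \emph{dimension-free}. Covering dimension only enters through \autoref{prop:FreeRokDimDecomp}, so in the infinite-dimensional case one has to show that the free global actions $H_\tau\curvearrowright X^{(n)}_\tau$ produced by the decomposition still have finite Rokhlin dimension; this is the place where the clopen hypothesis does the decisive work, and correspondingly where the argument genuinely departs from the non-clopen situation of \autoref{thm:FreeFiniteRdim}, which really does need $\dim(X)<\infty$.
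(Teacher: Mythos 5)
Your reduction in the first two paragraphs is sound: for a clopen partial action the domain-type map $x\mapsto\tau(x)$ is locally constant, the sets $X^{(n)}$ are $G$-invariant clopen pieces (note that the identity $\tau(\theta_a(x))=a\,\tau(x)$ for $a\in\tau(x)^{-1}$ follows from the partial action axioms alone, via $\theta_a(X_{a^{-1}}\cap X_h)=X_a\cap X_{ah}$; freeness is not what is needed there), each $\alpha^{(n)}$ has the $n$-decomposition property of \autoref{df:nIntProp}, and in the finite-dimensional case \autoref{prop:FreeRokDimDecomp} gives $\dimRok(\alpha)\leq\dim(X)$. The genuine gap is the last step, and you say so yourself: after \autoref{thm:dimRokResH} you are left with finitely many \emph{global} free actions $H_\tau\curvearrowright X^{(n)}_\tau$ on compact Hausdorff spaces of possibly infinite dimension, and you have no argument that these have finite Rokhlin dimension. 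The two fixes you gesture at do not address this: \autoref{thm:RdimGlobaliz} combined with globalizability is vacuous here, since the subsystems are already global; and ``rigidity of the orbit partition'' is not an argument. Moreover, the clopen hypothesis cannot do any ``decisive work'' at this stage: it has been fully consumed in producing the direct-sum decomposition, and the remaining statement concerns an arbitrary free action of a finite group on an arbitrary compact space, where no clopen structure survives. What is actually needed is a dimension-free finiteness theorem for free global actions of finite groups on compact Hausdorff spaces (e.g.\ via the fact that such a quotient map is a finite covering, so a finite trivializing cover plus a partition of unity yields finitely many towers), and your proposal neither proves nor cites such a result. As it stands, the unconditional finiteness claim --- which is the whole point of the proposition --- is not established.

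For comparison, the paper takes a different and shorter route: since $\theta$ is clopen and $X$ is compact, the graph of $\theta$ is closed, so the system globalizes to a (free, as one checks by a short computation) action on a compact space $Y$ by Proposition~3.5 of \cite{Aba_enveloping_2003}; \autoref{thm:RdimGlobaliz} (applicable because $C(X)$ is unital) gives $\dimRok(\alpha)=\dimRok(\beta)$, and then Theorem~4.2 of \cite{Gar_compact_2018} supplies both the finiteness and the bound $\dimRok(\beta)\leq\dim(Y)=\dim(X)$. Your decomposition argument could be completed along the same lines by invoking that external theorem for each global subsystem $H_\tau\curvearrowright X^{(n)}_\tau$, but without that (or an equivalent covering-map argument) the proof is incomplete precisely where the proposition goes beyond \autoref{thm:FreeFiniteRdim}.
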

\begin{proof}
Denote by $(C(Y),\beta)$ the globalization of $(C(X),\alpha)$;
see Proposition~3.5 in~\cite{Aba_enveloping_2003}, noting that
the graph of $\theta$ is closed. Note that 
$\dimRok(\alpha)=\dimRok(\beta)$ by \autoref{thm:RdimGlobaliz}.
Let $\sigma$ denote the action of $G$ on $Y$ induced by $\beta$.

We claim that $\sigma$ is free. Note that $Y=\bigcup_{g\in G}\sigma_g(X)$. Let $g\in G$ and $y\in Y$ satisfy $\sigma_g(y)=y$.
Choose $h\in G$ and $x\in X$ such that $y=\sigma_h(x)$, so
that $\sigma_{h^{-1}gh}(x)=x$. Thus $x$ belongs to 
$X\cap \sigma_{h^{-1}gh}(X)=X_{h^{-1}gh}$, which is where
$\sigma_{hgh^{-1}}$ agrees with $\theta_{hgh^{-1}}$.
Thus $\theta_{hgh^{-1}}(x)=x$, and since $\theta$ is free, this
implies that $hgh^{-1}=1_G$ and hence also $g=1_G$. Hence
$\sigma$ is free.

Since $Y$ is compact, it follows from Theorem~4.2 in~\cite{Gar_compact_2018}
that $\dimRok(\beta)<\I$, and thus $\dimRok(\alpha)<\I$.

We complete the proof by showing that $\dimRok(\alpha)\leq \dim(X)$. 
Note that $\dim(Y)=\dim(X)$ (this follows, for example, by the claim in the 
proof of \autoref{thm:CPRdimUnital}).
Using \cite[Lemma~1.9]{HirPhi_rokhlin_2015} at the second, we get
\[\dimRok(\alpha)=\dimRok(\beta)\leq \dim(Y)=\dim(X).\qedhere\]
\end{proof}

\end{document}